\title{
%An Investigation into
Small Weight Code Words of Projective Geometric Codes}
\author{Sam Adriaensen \\  {\it Vrije Universiteit Brussel} \and Lins Denaux \\ {\it Ghent University}}
\date{}
\newtheorem*{rep@theorem}{\rep@title}
\newcommand{\newreptheorem}[2]{%
\newenvironment{rep#1}[1]{%
 \def\rep@title{#2 \ref{##1}}%
 \begin{rep@theorem}}%
 {\end{rep@theorem}}}
\newtheoremstyle{indentedObs}
  {3pt}% space before
  {3pt}% space after
  {\addtolength{\@totalleftmargin}{2.5em}
   \addtolength{\linewidth}{-2.5em}
   \parshape 1 2.5em \linewidth}% body font
  {}% indent
  {\itshape}% header font
  {.}% punctuation
  {.5em}% after theorem header
  {}% header specification (empty for default)
\newtheorem{thm}{Theorem}[section]
\newtheorem{lm}[thm]{Lemma}
\newtheorem{res}[thm]{Result}
\newtheorem{crl}[thm]{Corollary}
\theoremstyle{definition}
\newtheorem{rmk}[thm]{Remark}
\newtheorem{df}[thm]{Definition}
\newtheorem{constr}[thm]{Construction}
\theoremstyle{indentedObs}
\newtheorem{obs}{Observation}
\newcommand{\mc}{\mathcal{C}}
\newcommand{\ms}{\mathcal{S}}
\newcommand{\mh}{\mathcal{H}}
\newcommand{\pr}[3]{\textnormal{proj}_{#1,#2}(#3)}
\newcommand{\prj}[4]{\textnormal{proj}^{(#1)}_{#2,#3}(#4)}
\newcommand{\wt}{\textnormal{wt}}
\newcommand{\supp}{\textnormal{supp}}
\newcommand{\pa}{\textnormal{\dn p}}
\newcommand{\la}{\textnormal{\dn l}}
\newcommand{\gauss}[2]{\genfrac{[}{]}{0pt}{}{#1}{#2}_q}
\newcommand{\one}{\mathbf 1}
\newcommand{\zero}{\mathbf 0}
\newcommand{\vspan}[1]{\left \langle #1 \right \rangle}
\newcommand{\fp}{\mathbb F_p}
\newcommand{\set}[1]{\left \{ #1 \right \}}
\newcommand{\sett}[2]{\left \{ #1 \, : \, #2 \right \}}
\newcommand{\pg}{\textnormal{PG}}
\newcommand{\eps}{\varepsilon}
\renewcommand{\geq}{\geqslant}
\renewcommand{\leq}{\leqslant}
\begin{document}

\maketitle

\begin{abstract}
    We investigate small weight code words of the $p$-ary linear code $\mc_{j,k}(n,q)$ generated by the incidence matrix of $k$-spaces and $j$-spaces of $\pg(n,q)$ and its dual, with $q$ a prime power and $0 \leq j < k < n$.
    Firstly, we prove that all code words of $\mc_{j,k}(n,q)$ up to weight $\left(3 - \mathcal{O}\left(\frac 1 q \right) \right) \gauss{k+1}{j+1}$ are linear combinations of at most two $k$-spaces (i.e.\ two rows of the incidence matrix).
    As for the dual code $\mc_{j,k}(n,q)^\perp$, we manage to reduce both problems of determining its minimum weight (1) and characterising its minimum weight code words (2) to the case $\mc_{0,1}(n,q)^\perp$.
    This implies the solution to both problem (1) and (2) if $q$ is prime and the solution to problem (1) if $q$ is even.
\end{abstract}

{\it Keywords:} Linear codes, Projective spaces, Small weight code words.

{\it Mathematics Subject Classification:} $05$B$25$, $94$B$05$.

\section{Introduction}

To keep things clear and compact, we will postpone introducing the necessary preliminaries; see Section \ref{SecPrelim} for an overview of all notations and known results used throughout this article.

A main research topic in coding theory is finding the minimum weight of certain linear codes and characterising its minimum weight code words (or, more generally, code words of a relatively small weight).
This article investigates small weight code words of $\mc_{j,k}(n,q)$ and $\mc_{j,k}(n,q)^\perp$, which are the $p$-ary linear code generated by the incidence matrix of $k$-spaces and $j$-spaces of $\pg(n,q)$ and its dual, respectively.\\

Some important characterisations are already known.
Namely, the minimum weight of $\mc_{j,k}(n,q)$ is equal to the number of $j$-spaces in a $k$-space, and code words corresponding to this weight are characterised as being scalar multiples of $k$-spaces (Result \ref{ResLowestWeight}).
Moreover, narrowing our view to the code $\mc_{0,k}(k+1,q)$, all code words of weight at most $\left(3 - \mathcal{O}\left(\frac 1 q \right) \right) q^k$ are characterised as being linear combinations of at most two $k$-spaces (Result \ref{ResLins}).
Subtly brushing away the fact that the authors of the latter proved a slightly stronger result, no other results are known concerning small weight code words of $\mc_{j,k}(n,q)$ if $n > 2$.

Less is known about the dual code $\mc_{j,k}(n,q)^\perp$.
In general, the minimum weight of $\mc_{j,k}(n,q)^\perp$ is not known.
However, this minimum weight has upper bound $2q^{n-k}$.
If $q$ is prime, the minimum weight of $\mc_{j,j+1}(n,q)^\perp$ is equal to this bound and its minimum weight code words are characterised as being scalar multiples of so-called \emph{standard words} (Definition \ref{DefStandardWord}, Result \ref{ResMaxMinWtDualCode}).
If $q$ is even, the minimum weight of $\mc_{0,k}(n,q)^\perp$ equals $(q+2)q^{n-k-1}$ (Result \ref{ResMaxMinWtDualCodeQeven}).\\

A further overview of results on these codes can be found in \cite{lavrauwC} and \cite{lins}.

\section{Outline and main results}

As mentioned before, all preliminaries needed to guide you through this article can be found in Section \ref{SecPrelim}.

In section \ref{SecRelDualCode}, we study the relation between $\mc_{j,k}(n,q)$, $\mc_{j,n-k+j}(n,q)^\perp$, their intersection (i.e. the \emph{hull} $\mh_{j,k}(n,q)$ of $\mc_{j,k}(n,q)$) and their span.
We bundle several properties that were already known for specific values of $j$, $k$, $n$ and $q$, and present them in a general context.

In Section \ref{SectPointsKSpaces} and Section \ref{SectJKSpaces}, we investigate the small weight code words of $\mc_{0,k}(n,q)$ and $\mc_{j,k}(n,q)$, respectively.
In Section \ref{SectPointsKSpaces}, we use the known results concerning small weight code words of $\mc_{0,k}(k+1,q)$ to characterise all code words of $\mc_{0,k}(n,q)$ up till weight $W(k,q)$.
The exact value of the latter bound (as well as the meaning of the sets $Q_i$) can be found in Definition \ref{DefW}, but for the sake of simplicity, one can view this bound to be roughly equal to $(3-3/q)q^k$ if $q$ is large enough.
\begin{repthm}{ThmPointsKSpaces}
If $c$ is a code word of $\mc_k(n,q)$, with $\wt(c) \leq W(k,q)$, then $c$ is a linear combination of at most two $k$-spaces.
Moreover, if $q \in Q_3 \cup Q_4 \cup Q_5$, then this bound is tight.
\end{repthm}
In particular, the minimum weight code words of the hull $\mh_{0,k}(n,q)$ are characterised as well.
\begin{repcrl}{CrlMinWtHullPoints}
If $c$ is a code word of $\mh_{0,k}(n,q)$, with $\wt(c) \leq W(k,q)$, then $c$ is a scalar multiple of the difference of two $k$-spaces. In particular, the minimum weight of $\mh_{0,k}(n,q)$ is $2q^k$, and the minimum weight code words are scalar multiples of the difference of two $k$-spaces through a common $(k-1)$-subspace.
\end{repcrl}
These results, in turn, are used in Section \ref{SectJKSpaces} as base cases to characterise all code words of $\mc_{j,k}(n,q)$ and $\mh_{j,k}(n,q)$ up till weight $W(j,k,q)$.
Again, the exact value of the latter bound can be found in Definition \ref{DefWj}, but it is at least $(3-7/q) \gauss{k+1}{j+1}$ if $q$ is large enough.
\begin{repthm}{ThmLargeQMainThm}
Assume that $q \notin Q_1$.
\begin{enumerate}
 \item[(1)] If $c$ is a code word of $\mc_{j,k}(n,q)$, with $\wt(c) \leq W(j,k,q)$, then $c$ is a linear combination of at most two $k$-spaces.
 \item[(2)] If $c$ is a code word of $\mh_{j,k}(n,q)$, with $\wt(c) \leq W(j,k,q)$, then $c$ is a scalar multiple of the difference of two $k$-spaces.
 In particular, the minimum weight of $\mh_{j,k}(n,q)$ is $2q^{k-j} \gauss{k}{j}$, and the minimum weight code words are scalar multiples of the difference of two $k$-spaces through a common $(k-1)$-space.
\end{enumerate}
\end{repthm}
The following, somewhat weaker result is valid for any prime power $q$.
\begin{repthm}{ThmMainThmSmallQ}
 If $c$ is a code word of $\mc_{j,k}(n,q)$, with
 \[
 \wt(c) \leq \frac{2q^k}{\theta_j} \gauss k j,
 \]
 then $c$ is a scalar multiple of a $k$-space.
 As a consequence, the minimum weight of $\mh_{j,k}(n,q)$ is larger than $2q^k \gauss k j / \theta_j$.
\end{repthm}
As a final note to this chapter, we investigate the cyclicity of $\mc_{j,k}(n,q)$.
\begin{repthm}{ThmCyclic}
The code $\mc_{j,k}(n,q)$ is equivalent to a cyclic code if and only if $j = 0$.
\end{repthm}

In Section \ref{SectDual}, we shift our focus to the dual code $\mc_{j,k}(n,q)^\perp$ and manage to reduce both problems of determining its minimum weight and characterising its minimum weight code words to the codes $\mc_{0,1}(n,q)^\perp$.
This is done using the construction of a \emph{pull-back} (Construction \ref{ConstrPullBack}).
Pull-backs are code words of $\mc_{j,k}(n,q)^\perp$ constructed from code words of $\mc_{0,k-j}(n-j,q)^\perp$.
\begin{repthm}{ThmPullBack}
    If $j>0$, then all minimum weight code words of $\mc_{j,k}(n,q)^\perp$ are pull-backs.
\end{repthm}
As a consequence, known results concerning $\mc_{j,k}(n,q)^\perp$ are found to be valid for general $j$ and $k$.
\begin{repcrl}{CorGeneralDual}
\begin{enumerate}
 \item[(1)] $d \left( \mc_{j,k}(n,q)^\perp \right) = d \left( \mc_{0,1}(n-k+1,q)^\perp \right)$.
 \item[(2)] If $p$ is prime, the minimum weight code words of $\mc_{j,k}(n,p)^\perp$ are scalar multiples of the standard words, and thus have weight $2p^{n-k}$.
 \item[(3)] If $q$ is even, then $d \left ( \mc_{j,k}(n,q)^\perp \right) = (q+2)q^{n-k-1}$.
\end{enumerate}
\end{repcrl}

We conclude this article with Section \ref{SecOpenProblem} by briefly discussing some open problems concerning these codes.

\section{Preliminaries}\label{SecPrelim}

\subsection{Basic notation}

Throughout this entire article, we will assume $p$ to be a prime number and $q:=p^h$, with $h \in \mathbb{N}^*$.
Moreover, we consider natural numbers $j$, $k$ and $n$, with the general assumption that
\[
    0 \leq j < k < n\textnormal{.}
\]
As a consequence, we will sporadically use the fact that $k \geq 1$ and $n \geq 2$.

We will denote the Desarguesian projective space of (projective) dimension $n$ over $\mathbb F_q$ by $\pg(n,q)$.
For any number $m \in \mathbb{N}$, the number of $j$-spaces in $\pg(m,q)$ is given by the Gaussian coefficient
\[
\gauss{m+1}{j+1} := \frac{ (q^{m+1}-1)(q^m-1) \cdots (q^{m-j+1}-1) }{ (q^{j+1}-1)(q^j-1) \cdots (q-1) }.
\]
By convention, we define $\gauss{m+1}{0}$ to be $1$ and we denote $\theta_m:=\gauss{m+1}{1}$, with the extension that $\theta_m:=0$ for values $m\in\mathbb{Z}\setminus\mathbb{N}$.

Denote the set of all $j$-subspaces of a projective space $\pi$ by $G_j(\pi)$.
We denote the latter by $G_j(n,q)$ if $\pi$ is the ambient space $\pg(n,q)$.
If $\pi$ or $n$ and $q$ are clear from context, we will denote this simply by $G_j$.
Let $V(j,\pi)$ denote the $p$-ary vector space of functions from $G_j(\pi)$ to $\fp$, i.e.\ $V(j,\pi) := \fp^{G_j(\pi)}$.
Similarly, $V(j,n,q) := \fp^{G_j(n,q)}$.
We will denote the functions that map everything to one, respectively zero, by $\one$, respectively $\zero$.

We can identify a $k$-space $\kappa$ of $\pg(n,q)$ with the function $\kappa^{(j)} \in V(j,n,q)$ such that
\[
 \kappa^{(j)}(\lambda) = 
 \begin{cases}
 1 & \text{if } \lambda \subseteq \kappa,\\
 0 & \text{otherwise.}
 \end{cases}
\]

If $j$ is clear from context, we will denote $\kappa^{(j)}$ as $\kappa$.
There should be no confusion.
Let $\mc_{j,k}(n,q)$ denote the subspace of $V(j,n,q)$, generated by $G_k(n,q)^{(j)} := \sett{ \kappa^{(j)}}{\kappa \in G_k(n,q)}$.
We will also denote $\mc_{0,k}(n,q)$ as $\mc_k(n,q)$.

Alternatively, one could define the code $\mc_{j,k}(n,q)$ as follows.
Consider the $p$-ary incidence matrix $A$ of $k$-spaces and $j$-spaces, i.e.\ the rows of the matrix correspond to the $k$-spaces of $\pg(n,q)$ and the columns to the $j$-spaces.
Put a one in the matrix if the $j$-space corresponding to the column is completely contained in the $k$-space corresponding to the row, and zero otherwise.
Symbolically,
\begin{align*}
 A \in \fp^{G_k \times G_j} 
 && \text{and} &&
 A_{\kappa, \lambda} =
 \begin{cases}
 1 & \text{if } \lambda \subseteq \kappa,\\
 0 & \text{otherwise.}
 \end{cases}
\end{align*}
In this way, $\mc_{j,k}(n,q)$ is the row span of the matrix $A$.
However, we prefer the definition of $\mc_{j,k}(n,q)$ as a vector subspace of $V(j,n,q)$, as this is more convenient for notation.

% We will study the codes $\mc_{j,k}(n,q)$.
If $v \in V(j,n,q)$, define the \emph{support} of $v$ as $\supp(v) := \set{\lambda \in G_j:v(\lambda) \neq 0}$ and the \emph{weight} of $v$ as $\wt(v) := |\supp(v)|$.
For a vector subspace $W$ of $V(j,n,q)$, let $d(W)$ denote the minimum weight of $W$, i.e.\ $d(W) := \min \set{ \wt(c) : c \in W \setminus \set \zero}$.
For $0 \leq i < j$, we will also make use of the set $\supp_i(c) := \set{\iota \in G_i : (\exists \lambda \in \supp(c))(\iota \subset \lambda)} = \bigcup_{\lambda \in \supp(c)} G_i(\lambda)$.

Define the \emph{scalar product} of two functions $v,w \in V(j,n,q)$ as
\[
 v \cdot w := \sum_{\lambda \in G_j} v(\lambda) w(\lambda).
\]
Define the \emph{dual code} of $\mc_{j,k}(n,q)$ as its orthogonal complement with respect to the above scalar product.
This means that the dual code is
\[
 \mc_{j,k}(n,q)^\perp := \set{v \in V(j,n,q) : (\forall c \in \mc_{j,k}(n,q))(c \cdot v = 0)}.
\]
Define the \emph{hull} $\mh_{j,k}(n,q)$ of $\mc_{j,k}(n,q)$ as
\[
    \mh_{j,k}(n,q) := \mc_{j,k}(n,q) \cap \mc_{j,n-k+j}(n,q)^\perp.
\]

\subsection{Known results and the bounds $\boldsymbol{W(k,q)}$ and $\boldsymbol{W(j,k,q)}$}

Some important characterisations are already known.

\begin{res}[{\cite[Theorem 1]{bagchi}}]\label{ResLowestWeight}
The minimum weight of $\mc_{j,k}(n,q)$ is $\gauss{k+1}{j+1}$, and minimum weight code words are scalar multiples of $k$-spaces, i.e.\ scalar multiples of the elements of $G_k(n,q)^{(j)}$.
\end{res}

If $j = 0$, stronger characterisations are known.

\begin{df}
\label{DefW}
Define $W(k,q)$ as
\[
 W(k,q) := 
 \begin{cases}
  2q^k & \text{if } q \in Q_1 := \sett q { q \leq 9} \cup  \set{16,25,27,49},\\
  2\theta_k & \text{if } q \in Q_2 := \sett q {9 < q \leq 23} \cup \set{29,31,32,121},\\
  3q^k-3q^{k-1}-1 & \text{if } q \in Q_3 := \sett q {q > 32, \, q \text{ prime}},\\
  3q^k-3q^{k-1}+\theta_{k-2}-1 & \text{if } q \in Q_4 := \sett q {q > 32, \, q \text{ even}},\\
  3q^k-2q^{k-1}+\theta_{k-2}-1 & \text{if } q \in Q_5, \text{ the complement of } \bigcup_{i=1}^4 Q_i.
  \end{cases}
\]
\end{df}

We will use the following weakened version of known characterisations.

\begin{res}[{\cite[Corollary 2.2.13]{lins} \cite[Theorem 1.4]{pol}}]\label{ResLins}
If $c$ is a code word of $\mc_k(k+1,q)$, with $\wt(c) \leq W(k,q)$, then $c$ is a linear combination of at most two $k$-spaces.
Moreover, this bound is tight if $q \in Q_3 \cup Q_4 \cup Q_5$.
\end{res}

In Section \ref{SectPointsKSpaces} we prove that this holds for all codes $\mc_k(n,q)$.

%Section \ref{SectPointsKSpaces} proves Theorem \ref{ThmPointsKSpaces}, stating that Result \ref{ResLins} holds for all codes $\mc_k(n,q)$.
%The bound is still tight if $q \in Q_3 \cup Q_4 \cup Q_5$, i.e.\ in that case, one can construct code words of weight $W(k,q)+1$ which aren't a linear combination of two $k$-spaces.
%This is due to the fact that $\mc_k(k+1,q)$ can be recovered as a subcode of $\mc_k(n,q)$ by restricting the generators $G_k(n,q)$ of $\mc_k(n,q)$ to the $k$-spaces in a fixed $(k+1)$-space.

\begin{df}
\label{DefWj}
Define $W(j,k,q)$ as
\[
 W(j,k,q) := 
 \begin{cases}
  \frac{2 q^k}{\theta_j} \gauss{k}{j} & \text{if } q \in Q_1,\\
  2 \gauss{k+1}{j+1} & \text{if } q \in Q_2,\\
  \left(3 - \frac 7 q \right) \gauss{k+1}{j+1} & \text{if } q \in Q_3 \cup Q_4,\\
  \left(3 - \frac 6 q \right) \gauss{k+1}{j+1} & \text{if } q \in Q_5.
  \end{cases}
\]
\end{df}

Remark that $W(0,k,q) \leq W(k,q)$.
The focus of Section \ref{SectJKSpaces} are Theorems \ref{ThmLargeQMainThm} and \ref{ThmMainThmSmallQ}, where we prove that code words of $\mc_{j,k}(n,q)$ up to weight $W(j,k,q)$ are linear combinations of at most two $k$-spaces.

\begin{df}
 \label{DefStandardWord}
Let $\iota$ be a $(j-1)$-space, and let $\pi$ and $\rho$ be two $(n-k+j)$-spaces through an $(n-k+j-1)$-space containing $\iota$.
Define $v \in V(j,n,q)$ as
\[
 v :=
 \sum_{\substack{\lambda \in G_j(\pi) \\ \iota \subset \lambda}} \lambda^{(j)} - \sum_{\substack{\lambda' \in G_j(\pi) \\ \iota \subset \lambda'}} \lambda'^{(j)}.
\]
Code words of this form are called \emph{standard words} of $\mc_{j,k}(n,q)^\perp$.
\end{df}

\begin{res}[{\cite[Theorem 3, Proposition 2]{bagchi}}]\label{ResMaxMinWtDualCode}
 Standard words of $\mc_{j,k}(n,q) ^\perp$ are code words of $\mc_{j,k}(n,q) ^\perp$ of weight $2q^{n-k}$.
 Therefore, the minimum weight of $\mc_{j,k}(n,q)^\perp$ is at most $2q^{n-k}$.
 Moreover, if $p$ is prime, then the minimum weight code words of $\mc_{j,j+1}(n,p)^\perp$ are the scalar multiples of the standard words.
\end{res}

\begin{res}[{\cite[Theorem 1]{calkin}}]\label{ResMaxMinWtDualCodeQeven}
 If $q$ is even, then $d \left ( \mc_k(n,q)^\perp \right) = (q+2)q^{n-k-1}$.
\end{res}

\section{A brief note on the relation with the dual code}\label{SecRelDualCode}

As a generalisation of \cite[Chapter 6]{assmus} and \cite[Lemma 2]{lavrauwB}, we have the following.

\begin{lm}
\label{LmHull}
\begin{enumerate}
    \item[(1)] If $c \in \mc_{j,k}(n,q)$, then $c \cdot \pi$ is equal for all subspaces $\pi$ in $\pg(n,q)$ with $\dim(\pi) \geq n-k+j$.
    \item[(2)] $\mh_{j,k}(n,q) = \set{c \in \mc_{j,k}(n,q): c\cdot \one = 0}
    = \vspan{\kappa - \kappa' : \kappa \in G_k}$ for any $\kappa' \in G_k$.
    \item[(3)] $\dim\big(\mh_{j,k}(n,q)\big)  = \dim\big(\mc_{j,k}(n,q)\big) - 1$.
\end{enumerate}
\end{lm}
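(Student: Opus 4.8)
The plan is to build all three parts on a single modular identity: for all integers $d \geq j \geq 0$ one has $\gauss{d+1}{j+1} \equiv 1 \pmod p$. I would prove this by writing $\gauss{d+1}{j+1} = \prod_{i=0}^{j} (q^{d+1-i}-1)/(q^{j+1-i}-1)$ and observing that, since $q = p^h$, every factor $q^a-1$ with $a \geq 1$ satisfies $q^a-1 \equiv -1 \pmod p$. The hypothesis $d \geq j$ guarantees that all exponents occurring in both numerator and denominator are at least $1$ (so no vanishing $q^0-1$ factor appears), hence numerator and denominator each reduce to $(-1)^{j+1}$ and the quotient is $1$ in $\fp$. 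This is the one observation that makes the whole lemma work.

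For part (1), I would reduce to generators by linearity of the scalar product: if $c = \sum_\kappa a_\kappa \kappa$, it suffices to evaluate $\kappa \cdot \pi$ for a single $k$-space $\kappa$. Since $\kappa \cdot \pi$ counts the $j$-spaces lying in $\kappa \cap \pi$, we have $\kappa \cdot \pi = \gauss{\dim(\kappa \cap \pi)+1}{j+1}$. The Grassmann identity gives $\dim(\kappa \cap \pi) \geq k + \dim(\pi) - n \geq j$ as soon as $\dim(\pi) \geq n-k+j$, so the modular identity forces $\kappa \cdot \pi \equiv 1$ for every such $\pi$. Summing, $c \cdot \pi = \sum_\kappa a_\kappa$, a value independent of $\pi$, which is exactly the claim.

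For part (2), working throughout with $c \in \mc_{j,k}(n,q)$, I would establish the two equalities in turn. Membership $c \in \mc_{j,n-k+j}(n,q)^\perp$ means $c \cdot \pi = 0$ for every $(n-k+j)$-space $\pi$; by part (1) all these inner products equal the common value taken at $\pi = \pg(n,q)$ (whose dimension $n \geq n-k+j$), namely $c \cdot \one$ since $\pg(n,q)^{(j)} = \one$. Thus the condition collapses to $c \cdot \one = 0$, giving the first equality. For the second, note $(\kappa-\kappa')\cdot\one = \gauss{k+1}{j+1} - \gauss{k+1}{j+1} = 0$, so the span lies inside $\sett{c}{c\cdot\one=0}$; conversely, for $c = \sum_\kappa a_\kappa \kappa$ the identity $\gauss{k+1}{j+1}\equiv 1$ turns $c\cdot\one = \gauss{k+1}{j+1}\sum_\kappa a_\kappa = 0$ into $\sum_\kappa a_\kappa = 0$, so that $c = \sum_\kappa a_\kappa(\kappa-\kappa')$ lies in $\vspan{\kappa-\kappa' : \kappa \in G_k}$.

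Part (3) is then immediate: the map $c \mapsto c\cdot\one$ is a linear functional on $\mc_{j,k}(n,q)$ with kernel $\mh_{j,k}(n,q)$ by part (2), and it is nonzero because a single $k$-space gives $\kappa\cdot\one = \gauss{k+1}{j+1}\equiv 1 \neq 0$; the rank--nullity theorem yields codimension exactly $1$. I do not expect a genuine obstacle in this lemma — the only delicate point is the modular reduction of the Gaussian coefficient and the accompanying exponent bookkeeping (keeping $d \geq j$ so no $q^0-1$ factor intrudes), which is precisely the device that renders the inner products constant and powers all three parts.
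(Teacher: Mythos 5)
Your proposal is correct and follows essentially the same route as the paper: part (1) via Grassmann's identity and the reduction $\gauss{d+1}{j+1}\equiv 1 \pmod p$ (which you, unlike the paper, actually verify), part (2)'s first equality by specialising the constant inner product to $\one$, and part (3) via the nonvanishing linear functional $c \mapsto c \cdot \one$. The only divergence is in the span equality of part (2), where you argue both inclusions directly by normalising the coefficients ($\sum_\kappa a_\kappa = 0$ gives $c = \sum_\kappa a_\kappa(\kappa - \kappa')$) instead of the paper's dimension count via $K \oplus \vspan{\kappa'} = \mc_{j,k}(n,q)$; both are sound and equally short.
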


\begin{proof}
(1) Take a $k$-space $\kappa$ and a subspace $\pi$ with $\dim(\pi) \geq n-k+j$.
It is easy to see that, when considered as elements of $V(j,n,q)$, $\kappa \cdot \pi$ equals the number of $j$-spaces in $\kappa \cap \pi$ modulo $p$.
By Grassmann's identity, $\dim(\kappa \cap \pi) \geq \dim(\kappa) + \dim(\pi) - n \geq j$.
Therefore, the number of $j$-spaces in $\kappa \cap \pi$ equals $\gauss {\dim(\kappa \cap \pi)+1}{j+1} \equiv 1 \pmod p$.
Now take a code word $c \in \mc_{j,k}(n,q)$.
Then $c$ is a linear combination of $k$-spaces, so $c = \sum_i \alpha_i \kappa_i$ for some $\alpha_i \in \fp$ and $\kappa_i \in G_k$.
Since the scalar product is linear, we have that
\[
 c \cdot \pi = \Big(\sum_i \alpha_i \kappa_i\Big) \cdot \pi
 = \sum_i \alpha_i (\kappa_i \cdot \pi)
 = \sum_i \alpha_i,
\]
hence $c \cdot \pi$ is equal for all $\pi$.\\

(2, 3) Take a code word $c \in \mc_{j,k}(n,q)$.
Then $c \in \mc_{j,n-k+j}(n,q)^\perp$ if and only if $c$ is orthogonal to all code words of $\mc_{j,n-k+j}(n,q)$.
Since the scalar product is linear, is suffices that $c$ is orthogonal to the generators of $\mc_{j,n-k+j}(n,q)$.
By (1), this only requires that the scalar product of $c$ with a specific subspace of dimension at least $n-k+j$ is zero, e.g. the whole space.
This means that $c \cdot \one$ is zero.
Hence, $\mh_{j,k}(n,q) = \set{c \in \mc_{j,k}(n,q) : c \cdot \one = 0}$.

Since $c \cdot \one = 0$ is a linear equation, we know that $\set{c \in \mc_{j,k}(n,q) : c \cdot \one = 0}$ is a vector subspace of $\mc_{j,k}(n,q)$ of codimension 0 or 1.
Since we have proven in (1) that, for any $k$-space $\kappa$, $\kappa \cdot \one = 1$, this vector subspace must be a proper subspace, hence it has codimension 1, proving (3).

Now take two $k$-spaces $\kappa$ and $\kappa'$.
It is clear that $\kappa - \kappa' \in \mc_{j,k}(n,q)$.
If $\pi \in G_{n-k+j}$, then we know that $\kappa \cdot \pi = \kappa' \cdot \pi = 1$ by $(1)$.
Hence, $\pi \cdot (\kappa - \kappa') = 0$.
Therefore, $\kappa - \kappa'$ is orthogonal to all generators of $\mc_{j,n-k+j}(n,q)$, which means that $\kappa - \kappa' \in \mc_{j,n-k+j}(n,q)^\perp$.
As a result, if we fix $\kappa' \in G_k$, $K := \vspan{\kappa - \kappa' : \kappa \in G_k} \leq \mh_{j,k}(n,q)$.
Since $K \oplus \vspan {\kappa'} = \mc_{j,k}(n,q)$, the codimension of $K$ in $\mc_{j,k}(n,q)$ is at most one.
Thus, $\dim(K) \geq \dim\big(\mh_{j,k}(n,q)\big)$.
This is only possible if those spaces coincide.
\end{proof}

We can also say something about the code $\ms_{j,k}(n,q) := \vspan{\mc_{j,k}(n,q), \mc_{j,n-k+j}(n,q)^\perp}$.

\begin{lm}
\label{LmSpan}
\begin{enumerate}
 \item [(1)] $\dim \big( \ms_{j,k}(n,q) \big) = \dim \big( \mc_{j,n-k+j}(n,q)^\perp \big) + 1$.
 \item [(2)] $\ms_{j,k}(n,q) 
 = \mh_{j,n-k+j}(n,q)^\perp
 = \{v \in V(j,n,q) : (\exists \alpha \in \fp)(\forall \kappa \in G_{n-k+j})(v \cdot \kappa = \alpha) \}$.
 \item [(3)] The minimum weight code words of $\ms_{0,k}(n,q)$ are scalar multiples of $k$-spaces.
 \item [(4)] If $j \geq 1$, then the minimum weight code words of $\ms_{j,k}(n,q)$ lie in $\mc_{j,n-k+j}(n,q)^\perp$.
\end{enumerate}
\end{lm}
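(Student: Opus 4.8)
The plan is to handle the four parts in the order (2), (1), (3), (4), since the description of $\ms_{j,k}(n,q)$ in (2) drives the weight statements. Write $C := \mc_{j,k}(n,q)$ and $D := \mc_{j,n-k+j}(n,q)$, so that $\ms_{j,k}(n,q) = C + D^\perp$ and $\mh_{j,k}(n,q) = C \cap D^\perp$. For the first equality in (2) I would dualise: since $\mh_{j,n-k+j}(n,q) = D \cap C^\perp$, the identity $(X \cap Y)^\perp = X^\perp + Y^\perp$ together with $(C^\perp)^\perp = C$ gives $\mh_{j,n-k+j}(n,q)^\perp = D^\perp + C = \ms_{j,k}(n,q)$. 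For the set-theoretic description, call the right-hand set $T$. If $v = c + w$ with $c \in C$, $w \in D^\perp$, then for every $(n-k+j)$-space $\kappa$ one has $w \cdot \kappa = 0$ (as $\kappa$ generates $D$) while $c \cdot \kappa$ is a constant by Lemma \ref{LmHull}(1); hence $v \in T$. Conversely, given $v \in T$ of constant value $\alpha$, fix a $k$-space $\mu$; since $\mu \cdot \kappa = 1$ for all $\kappa \in G_{n-k+j}$ by Lemma \ref{LmHull}(1), the word $v - \alpha \mu$ is orthogonal to every generator of $D$, so $v = \alpha\mu + (v - \alpha\mu) \in C + D^\perp$. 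Part (1) is then immediate from $\dim(C+D^\perp) = \dim C + \dim D^\perp - \dim(C \cap D^\perp)$ and $\dim \mh_{j,k}(n,q) = \dim C - 1$ (Lemma \ref{LmHull}(3)).

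For (3) and (4) I would attach to each $v$ its constant value $\alpha := v \cdot \kappa$ from (2) and split on $\alpha = 0$ (i.e.\ $v \in \mc_{j,n-k+j}(n,q)^\perp$) versus $\alpha \neq 0$. Take first $j=0$ (part (3)). If $\alpha \neq 0$, every $(n-k)$-space carries a support point, so $\supp(v)$ blocks all $(n-k)$-spaces; by the Bose--Burton theorem $\wt(v) \geq \theta_k$, with equality only when $\supp(v)$ is a $k$-space $\kappa$, and then choosing for each $P \in \kappa$ an $(n-k)$-space meeting $\kappa$ exactly in $P$ forces $v(P)=\alpha$, so $v = \alpha\kappa^{(0)}$. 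The delicate case is $\alpha = 0$, a nonzero $v \in \mc_{0,n-k}(n,q)^\perp$ whose exact minimum weight is in general unknown; here I would show $\wt(v) > \theta_k$ by a quotient argument. Fixing $P \in \supp(v)$ and passing to $\pg(n-1,q) = \pg(n,q)/P$, the function $\bar v(\ell) := \sum_{Q \in \ell \setminus \{P\}} v(Q)$ on lines through $P$ has constant scalar product $-v(P) \neq 0$ with every $(n-k-1)$-space of the quotient, so by Bose--Burton its support has at least $\theta_k$ elements; as distinct lines through $P$ meet only in $P$, each yields a distinct support point of $v$ other than $P$, giving $\wt(v) \geq \theta_k + 1$. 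Hence the minimum weight is $\theta_k$, attained exactly by scalar multiples of $k$-spaces.

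Part (4) ($j \geq 1$). First I would exhibit a light word of $\mc_{j,n-k+j}(n,q)^\perp$: take a $(j-1)$-space $\iota$ (which exists precisely because $j \geq 1$) and two $k$-spaces $\pi,\rho$ through $\iota$ meeting in a common $(k-1)$-space, and set $w := \sum_{\lambda \in G_j(\pi),\, \iota \subset \lambda} \lambda^{(j)} - \sum_{\lambda' \in G_j(\rho),\, \iota \subset \lambda'} \lambda'^{(j)}$. A short check gives $w \cdot \kappa = 0$ for every $(n-k+j)$-space $\kappa$: if $\iota \not\subseteq \kappa$ both sums vanish, and if $\iota \subseteq \kappa$ each sum counts the $j$-spaces through $\iota$ in a subspace of dimension at least $j$, so each is $\equiv 1 \pmod p$ and they cancel; thus $w \in \mc_{j,n-k+j}(n,q)^\perp$ with $\wt(w) = 2q^{k-j}$. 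On the other hand, if $v$ has $\alpha \neq 0$, then every $(n-k+j)$-space contains a support $j$-space, and double counting incidences between support $j$-spaces and the $(n-k+j)$-spaces containing them gives $\wt(v) \geq \gauss{n+1}{k-j}/\gauss{n-j}{k-j} > q^{(j+1)(k-j)} \geq 2q^{k-j}$, the last step using $j \geq 1$. Comparing, any minimum weight word has weight at most $2q^{k-j}$, hence $\alpha = 0$, i.e.\ it lies in $\mc_{j,n-k+j}(n,q)^\perp$.

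The routine content is in (1) and (2); the real work is the weight dichotomy. The step I expect to be most delicate is the $\alpha = 0$ case of (3): because the minimum weight of $\mc_{0,n-k}(n,q)^\perp$ is unknown in general one cannot simply quote a bound, and the quotient-by-a-point reduction of a sum-zero word to a genuine blocking configuration one dimension down is what makes the argument go through. For (4) the analogous crux is selecting the explicit difference-of-pencils word so that it both lies in the dual and is strictly lighter than every $\alpha \neq 0$ word; once $j \geq 1$, the crude double-counting estimate already separates the two weights, so no sharp Bose--Burton-type classification of the $\alpha \neq 0$ words is needed.
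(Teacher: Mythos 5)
Your proposal is correct and follows essentially the same route as the paper: dualising $\mh_{j,n-k+j}(n,q)$ to get (2), Grassmann's identity for (1), the Bose--Burton blocking argument for (3), and for (4) the comparison of the double-counting lower bound $\wt(v) > (q^{j+1})^{k-j} \geq 2q^{k-j}$ for words with $\alpha \neq 0$ against a weight-$2q^{k-j}$ word of $\mc_{j,n-k+j}(n,q)^\perp$. The only difference is that you supply in full the two ingredients the paper delegates to citations — the Bose--Burton argument of Bagchi--Inamdar for (3), including the quotient-by-a-point treatment of the $\alpha = 0$ case, and the explicit standard word of weight $2q^{k-j}$ in place of Result \ref{ResMaxMinWtDualCode} — and both of your versions check out.
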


\begin{proof}
(1) By Grassmann's identity and Lemma \ref{LmHull} (3), we have
\begin{align*}
 \dim \big( \ms_{j,k}(n,q) \big) 
 &= \dim \big( \mc_{j,k}(n,q) \big) + \dim \big( \mc_{j,n-k+j}(n,q)^\perp \big) - \dim \big( \mc_{j,k}(n,q) \cap \mc_{j,n-k+j}(n,q)^\perp \big)\\
 &= \dim \big( \mc_{j,n-k+j}(n,q)^\perp \big) + 1.
\end{align*}

(2) Since $\vspan{A,B}^\perp = A^\perp \cap B^\perp$, we have that $\ms_{j,k}(n,q)^\perp = \mc_{j,k}(n,q)^\perp \cap \mc_{j,n-k+j}(n,q) = \mh_{j,n-k+j} (n,q) $.
By Lemma \ref{LmHull} (2), this means that $\ms_{j,k}(n,q)^\perp = \vspan{\kappa-\kappa' : \kappa, \kappa' \in G_{n-k+j}}^\perp$.
Hence, $v \in \ms_{j,k}(n,q) \Leftrightarrow (\forall \kappa, \kappa' \in G_{n-k+j})( v \cdot (\kappa-\kappa') = 0)$.
This means that $v \in \ms_{j,k}(n,q)$ if and only if $v\cdot \kappa$ is equal for all $(n-k+j)$-spaces $\kappa$.\\

(3) The arguments used in the literature to prove this exact same statement about $\mc_k(n,q)$ are also valid for the bigger code $\ms_{0,k}(n,q)$; for instance, see \cite[Proposition 1]{bagchi}.
The authors of the latter article make the exact same observation at the very end of their work.\\

(4) Assume that $j \geq 1$ and take a code word $c \in \ms_{j,k}(n,q)$, with $c \not \in \mc_{j,n-k+j}(n,q)^\perp$.
Then we know that there exists some $\alpha \in \fp^*$, with $c \cdot \kappa = \alpha$, for all $\kappa \in G_{n-k+j}$.
In particular, this means that every $(n-k+j)$-space $\kappa$ contains an element of $\supp(c)$.
Consider the set $V = \set{(\lambda,\kappa) : \lambda \in \supp(c), \lambda \subset \kappa \in G_{n-k+j}}$.
Since for every $\kappa$, there exists a $\lambda$ with $(\lambda,\kappa) \in V$, we get
\[
 \wt(c) \gauss{n-j}{k-j}
 = \wt(c) \gauss {n-j}{(n-k+j)-j}
 = |V|
 \geq \gauss {n+1}{(n-k+j)+1} = \gauss{n+1}{k-j}.
\]
Here we used the fact that $\gauss n k = \gauss n {n-k}$.
Manipulating this inequality yields
\begin{align*}
 \wt(c)
 &\geq \frac{\gauss{n+1}{k-j}}{\gauss{n-j}{k-j}}
 = \frac{\frac{(q^{n+1}-1)(q^n-1)\cdots(q^{n+2-k+j}-1)}{(q^{k-j}-1)(q^{k-j-1}-1)\cdots(q-1)}}{\frac{(q^{n-j}-1)(q^{n-j-1}-1)\cdots(q^{n-k+1}-1)}{(q^{k-j}-1)(q^{k-j-1}-1)\cdots(q-1)}}
 = \frac{q^{n+1}-1}{q^{n-j}-1} \frac{q^n-1}{q^{n-j-1}-1} \dots \frac{q^{n+2-k+j}-1}{q^{n-k+1}-1}\\
 & > (q^{j+1})^{k-j} \geq 2q^{k-j}.
\end{align*}
However, by Result \ref{ResMaxMinWtDualCode}, the minimum weight of $\mc_{j,n-k+j}(n,q)^\perp$ is at most $2q^{k-j}$.
Hence, the minimum weight code words of $\ms_{j,k}(n,q)$ must be contained in $\mc_{j,n-k+j}(n,q)^\perp$.
\end{proof}

Also note that, given a space $\pi$ with $\dim(\pi) > k$, $\pi^{(j)} = \sum_{\kappa \in G_k(\pi)} \kappa^{(j)}$.
This way, we see that if $k > k'$, then $\mc_{j,k}(n,q) \leq \mc_{j,k'}(n,q)$ and $\mc_{j,k}(n,q)^\perp \geq \mc_{j,k'}(n,q)^\perp$.

%\begin{rmk}
% Lemma \ref{LmSpan} (3) was already noted at the very end of \cite{bagchi}.
%\end{rmk}

\section{Codes of points and $\boldsymbol{k}$-spaces}\label{SectPointsKSpaces}

The tool to guide us towards a characterisation of small weight code words of $\mc_k(n,q)$, is the following linear map.
It is essentially due to Lavrauw, Storme \& Van de Voorde \cite[Lemma 11]{lavrauwB}, but they only use it for a result regarding $\mc_k(n,q)^\perp$ (see Result \ref{ResEmbeddedCodeWords}).
We define it in a more general form, for all values of $j$.

\begin{df}
Take a point $R$ in $\pg(n,q)$ and a hyperplane $\pi$ not through $R$.

Define
\[
 \text{proj}^{(j)}_{R,\pi} : V(j,n,q) \rightarrow V(j,\pi): v \mapsto \bigg(\prj j R \pi v: \lambda \mapsto \sum_{ \lambda' \in G_j(\vspan{R,\lambda})} v(\lambda')\bigg)\textnormal{.}
\]
This means that the value in $\prj j R \pi v$ of a $j$-space $\lambda \subset \pi$ is the sum of the values in $c$ of all $j$-spaces $\lambda'$ in the $(j+1)$-space $\vspan{R,\lambda}$.
We could also write this as
\[
 \text{proj}^{(j)}_{R,\pi}(v)(\lambda)  = v \cdot \vspan{R,\lambda}^{(j)}
\]
If $j = 0$, we will denote $\prj 0 R \pi v$ by $\pr R \pi v$.
\end{df}

We will now give the most important properties of this map.

\begin{lm}
\label{LmProj}
Assume that $R$ is a point of $\pg(n,q)$, and $\pi$ is a hyperplane not through $R$. Then the following holds:
\begin{enumerate}
    \item[(1)] The map $\textnormal{proj}^{(j)}_{R,\pi}$ is linear.
    \item[(2)] If $k < n-1$, then $\prj j R \pi {\mc_{j,k}(n,q)} = \mc_{j,k}(n-1,q)$.
    \item[(3)] If $k > j+1$, then $\prj j R \pi {\mc_{j,k}(n,q)^\perp} = \mc_{j,k-1}(n-1,q)^\perp$.
    \item[(4)] If $v \in V(j,n,q)$ and $R \not \in \supp_0 (v)$, then $\wt(\prj j R \pi v) \leq \wt(v)$, with equality if and only if no $(j+1)$-space through $R$ contains more than one $j$-space of $\supp(v)$.
    \item[(5)] If $v \in V(j,n,q)$, then $v \cdot \one = \prj j R \pi v \cdot \one$.
\end{enumerate}
\end{lm}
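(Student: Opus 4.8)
The plan is to prove each of the five properties of $\prj j R \pi {\,\cdot\,}$ more or less in turn, leaning on the convenient reformulation $\prj j R \pi v(\lambda) = v \cdot \vspan{R,\lambda}^{(j)}$ that is given in the definition. Parts (1) and (5) are essentially bookkeeping, so I would dispatch them quickly. For (1), linearity is immediate from the displayed formula, since $v \mapsto v \cdot \vspan{R,\lambda}^{(j)}$ is linear in $v$ for each fixed $\lambda$, and a map into a function space is linear iff each coordinate evaluation is. For (5), I would compute $\prj j R \pi v \cdot \one = \sum_{\lambda \in G_j(\pi)} v \cdot \vspan{R,\lambda}^{(j)}$ and observe that every $j$-space $\lambda'$ of $\pg(n,q)$ lies in exactly one $(j+1)$-space of the form $\vspan{R,\lambda}$ with $\lambda \subset \pi$ --- namely the one obtained by projecting $\lambda'$ from $R$ onto $\pi$ (if $R \in \lambda'$ the count still works out, as $\vspan{R,\lambda'}$ is itself such a $(j+1)$-space and $\lambda'$ is among its $j$-subspaces). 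Hence each $v(\lambda')$ is counted exactly once, giving $\sum_{\lambda'} v(\lambda') = v \cdot \one$.

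For (4), I would argue combinatorially. Writing $\prj j R \pi v(\lambda) = \sum_{\lambda' \in G_j(\vspan{R,\lambda})} v(\lambda')$, the support of the image is contained in the set of $\lambda \subset \pi$ for which some $\lambda' \subset \vspan{R,\lambda}$ has $v(\lambda') \neq 0$. The key point, using the projection-from-$R$ correspondence from (5), is that the $(j+1)$-spaces $\vspan{R,\lambda}$ as $\lambda$ ranges over $G_j(\pi)$ partition the $j$-spaces of $\pg(n,q)$ into classes, each class being the $j$-subspaces of one such $(j+1)$-space. The hypothesis $R \notin \supp_0(v)$ guarantees $R$ lies in no element of $\supp(v)$, so each $\lambda' \in \supp(v)$ contributes to exactly one coordinate of the image. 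Thus $\wt(\prj j R \pi v) \leq |\supp(v)| = \wt(v)$, with equality precisely when no cancellation merges two support elements into a single zero sum --- equivalently, when no $(j+1)$-space $\vspan{R,\lambda}$ contains two or more elements of $\supp(v)$. I would phrase the equality condition carefully, noting that having at most one support element per class both avoids cancellation and avoids two support elements mapping to the same coordinate.

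The substantive parts are (2) and (3), and I expect (3) to be the main obstacle. For (2), I would show both inclusions. The inclusion $\prj j R \pi {\mc_{j,k}(n,q)} \subseteq \mc_{j,k}(n-1,q)$ reduces by linearity to computing $\prj j R \pi {\kappa^{(j)}}$ for a single $k$-space $\kappa$; using $\prj j R \pi {\kappa}(\lambda) = \kappa^{(j)} \cdot \vspan{R,\lambda}^{(j)} \equiv (\text{number of } j\text{-spaces in } \kappa \cap \vspan{R,\lambda}) \bmod p$, I would check that this yields (a scalar multiple of) the $k$- or $(k-1)$-space of $\pi$ obtained by projecting $\kappa$ from $R$, which lies in $\mc_{j,k}(n-1,q)$ since $k < n-1$ keeps the projected dimension in range. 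For the reverse inclusion I would exhibit each $k$-space of $\pi$ as a projection of a $k$-space of $\pg(n,q)$ (lift it, or take a suitable cone), giving surjectivity onto the generators.

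Part (3) is harder because dualizing is not transparent under a non-injective linear map. The cleanest route I foresee is to use the adjoint relationship between $\prj j R \pi {\,\cdot\,}$ and a natural lifting/embedding map $V(j,\pi) \to V(j,n,q)$ with respect to the scalar products, so that orthogonality in $\pg(n,q)$ transfers to orthogonality in $\pi$. Concretely, I would identify the transpose of $\prj j R \pi{\,\cdot\,}$ as the embedding sending a $j$-space $\lambda \subset \pi$ to $\vspan{R,\lambda}^{(j)}$, and then relate $\mc_{j,k-1}(n-1,q)$ inside $\pi$ to the image under this embedding of the $k$-space generators of $\mc_{j,k}(n,q)$ via cones with vertex $R$. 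Combining $v \in \mc_{j,k}(n,q)^\perp$ with the adjoint identity should show $\prj j R \pi v$ annihilates every $(k-1)$-space of $\pi$, and conversely that every element of $\mc_{j,k-1}(n-1,q)^\perp$ is hit; the dimension shift from $k$ to $k-1$ arises because coning a $(k-1)$-space of $\pi$ over $R$ produces a $k$-space of $\pg(n,q)$. The condition $k > j+1$ presumably ensures the relevant cones and Gaussian-coefficient congruences behave, and I would watch that boundary case most carefully. The main difficulty throughout will be keeping the dimension shifts and the mod-$p$ intersection counts (Gaussian coefficients $\equiv 1 \bmod p$, as in Lemma \ref{LmHull}) exactly straight.
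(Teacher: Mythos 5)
Your treatment of parts (1), (3) and (4) follows essentially the paper's route (the ``adjoint'' identity you invoke for (3) is exactly the double-sum interchange the paper performs, and surjectivity in (2) and (3) is handled as you suggest, via a section of the projection), but there is a genuine error at the heart of your part (2). When $R \in \kappa$, the image $\prj j R \pi {\kappa^{(j)}}$ is \emph{not} the $(k-1)$-space $\kappa \cap \pi$ viewed in $V(j,\pi)$. Carrying out the computation you propose, $\prj j R \pi {\kappa}(\lambda)$ counts the $j$-spaces of $\vspan{R,\lambda}\cap\kappa$ modulo $p$, and since $\dim(\vspan{R,\lambda}\cap\kappa) = \dim(\lambda\cap\kappa)+1$, this value is $1$ precisely when $\dim(\lambda\cap\kappa)\geq j-1$; so the support of the image is the set of \emph{all} $j$-spaces of $\pi$ meeting $\kappa\cap\pi$ in dimension at least $j-1$, a much larger set than $G_j(\kappa\cap\pi)$. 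The distinction matters: a $(k-1)$-space has weight $\gauss{k}{j+1}$, strictly below the minimum weight $\gauss{k+1}{j+1}$ of $\mc_{j,k}(n-1,q)$ (Result \ref{ResLowestWeight}), so if your claim were correct the image would \emph{not} lie in that code and (2) would be false. The identification you actually need, and which the paper proves, is $\prj j R \pi \kappa = \sum \kappa'$ over all $k$-spaces $\kappa'$ of $\pi$ containing $\kappa\cap\pi$, using $\gauss{n-k}{1}\equiv 1 \pmod p$; this sum does lie in $\mc_{j,k}(n-1,q)$. Your plan as written would not produce this.

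A smaller slip occurs in (5): a $j$-space $\lambda'$ with $R\in\lambda'$ does \emph{not} lie in exactly one $(j+1)$-space of the form $\vspan{R,\lambda}$ with $\lambda\in G_j(\pi)$ --- note that $\vspan{R,\lambda'}=\lambda'$ is then a $j$-space, not a $(j+1)$-space --- it lies in $\gauss{n-j}{1}=\theta_{n-j-1}$ of them, so $v(\lambda')$ is counted $\theta_{n-j-1}$ times. The conclusion survives only because $\theta_{n-j-1}\equiv 1\pmod p$, which is the congruence the paper records; your ``counted exactly once'' justification is false as stated for such $\lambda'$. The analogous point is harmless in (4) only because there you assume $R\notin\supp_0(v)$.
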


\begin{proof}
(1) 
To prove that $\text{proj}^{(j)}_{R,\pi}$ is linear, we take $\alpha, \beta \in \fp$, and $v,w \in V(j,n,q)$.
We need to prove that $\prj j R \pi {\alpha v + \beta w} = \alpha \prj j R \pi v + \beta \prj j R \pi w$.
Take a $j$-space $\lambda \subset \pi$.
Then
\begin{align*}
 \prj j R \pi {\alpha v + \beta w}(\lambda) 
 & = (\alpha v + \beta w) \cdot \vspan{R,\lambda}
 = \alpha v \cdot \vspan{R,\lambda} + \beta w \cdot \vspan{R,\lambda} \\
 & = \alpha \prj j R \pi v (\lambda) + \beta \prj j R \pi w (\lambda).
\end{align*}
Since this holds for every $j$-space $\lambda \subset \pi$, this means that
$\prj j R \pi {\alpha v + \beta w} = \alpha \prj j R \pi v + \beta \prj j R \pi w$.\\

(2) 
Let $\kappa$ be a $k$-space of $\pg(n,q)$.
First, assume that $R \not \in \kappa$.
It is easy to see that $\prj j R \pi \kappa$ is the $k$-space $\vspan{R,\kappa} \cap \pi$.
So assume that $R \in \kappa$.
Take a $j$-space $\lambda \subset \pi$.
Then $\prj j R \pi \kappa (\lambda)$ equals the number of $j$-spaces in $\vspan{R,\lambda} \cap \kappa$.
Note that $\dim \big(\vspan{R,\lambda} \cap \kappa \big) = \dim(\lambda \cap \kappa) + 1$.
This implies that
\[
 \prj j R \pi \kappa (\lambda) = 
 \begin{cases}
  1 & \text{if } \dim(\lambda \cap \kappa) \geq j-1,\\
  0 & \text{otherwise.}
 \end{cases}
\]
The number of $k$-spaces $\kappa'$ in $\pi$ through a $j$-space $\lambda$, containing the $(k-1)$-space $\kappa \cap \pi$ equals 0 if $\dim(\lambda \cap \kappa) < j-1$, equals 1 if $\dim(\lambda \cap \kappa) = j-1$, and equals $\gauss{(n-1)-(k-1)}{k-(k-1)} \equiv 1 \pmod p$ if $\dim(\lambda \cap \kappa) = j$.
Thus, 
\[
 \prj j R \pi \kappa = \sum_{\substack{\kappa' \in G_k(\pi) \\ \kappa \cap \pi \subset \kappa'}} \kappa' \in \mc_{j,k}(n-1,q).
\]
Therefore the map $\text{proj}^{(j)}_{R,\pi}$ maps the set $G_k(n,q)^{(j)}$, which generates the code $\mc_{j,k}(n,q)$, to a subset of $\mc_{j,k}(n-1,q)$, containing its generating set $G_k(\pi)^{(j)}$.
Since this map is linear, this proves that $\prj j R \pi {\mc_{j,k}(n,q)} = \mc_{j,k}(n-1,q)$.\\

(3) Take $c \in \mc_{j,k}(n,q)^\perp$.
To prove that $\prj j R \pi c \in \mc_{j,k-1}(n-1,q)^\perp$, we need to prove that $\prj j R \pi c \cdot \kappa = 0$ for every $(k-1)$-space $\kappa \subset \pi$.
\begin{align*}
 \prj j R \pi c \cdot \kappa
 & = \sum_{\lambda \in G_j(\pi)} \prj j R \pi c (\lambda) \cdot \kappa(\lambda)
 = \sum_{\substack{\lambda \in G_j(\pi) \\ \lambda \subset \kappa }} \sum_{ \lambda' \in G_j(\vspan{R,\lambda} )} c(\lambda')\\
 & = \sum_{ \lambda' \in G_j(\vspan{R,\kappa} )} c(\lambda') \sum_{ \substack{\lambda \in G_j(\kappa) \\ \lambda' \subset \vspan{R,\lambda}} } 1.
\end{align*}
For a fixed $j$-space $\lambda'$ in $\vspan{R,\kappa}$, we have
\[
 \sum_{ \substack{\lambda \in G_j(\kappa) \\ \lambda' \subset \vspan{R,\lambda}} } 1 =
 \begin{cases}
 1 & \text{if } R \not \in \lambda', \\
 \theta_{k-j-1} & \text{otherwise}
 \end{cases}
 \quad \equiv 1 \pmod p.
\]
Therefore,
\[
 \prj j R \pi c \cdot \kappa
 = \sum_{\lambda' \in G_j(\vspan{R,\kappa} )} c(\lambda')
 = c \cdot \vspan{R,\kappa}
 = 0,
\]
because $\vspan{R,\kappa}$ is a $k$-space and $c \in \mc_{j,k}(n,q)^\perp$.
Hence, $\prj j R \pi {\mc_{j,k}(n,q)^\perp} \leq \mc_{j,k-1}(n-1,q)^\perp$.
To prove that equality holds, we can embed a code word $c'$ of $\mc_{j,k-1}(n-1,q)^\perp$ in $\pi$ (see Construction \ref{ConstrEmbedding}).
The image of this embedded code word under $\text{proj}^{(j)}_{R,\pi}$ will again be $c'$. \\

(4) It holds that if $\lambda \in \supp(\prj j R \pi v)$, then the $(j+1)$-space $\vspan{R, \lambda}$ must contain a $j$-space of $\supp(v)$.
Hence, if $R \not \in \supp_0(v)$, every $j$-space in $\supp(c)$ lies in a unique $(j+1)$-space through $R$, which implies that the number of $(j+1)$-spaces through $R$ that contains an element of $\supp(v)$ is at most $\wt(v)$.
Thus, $\wt(\prj j R \pi v) \leq \wt(v)$.
It is easy to see that equality holds if and only if no $(j+1)$-space through $R$ contains more than one element of $\supp(v)$.\\

(5)
\begin{align*}
    \prj j R \pi v \cdot \one 
    &= \sum_{\lambda \in G_j(\pi)} \pr R \pi v (\lambda) \cdot 1 = \sum_{\lambda \in G_j(\pi)} \sum_{ \lambda' \in G_j(\vspan{R,\lambda} )} v(\lambda')
    = \sum_{\lambda' \in G_j(n,q)} v(\lambda') \sum_{\substack{\lambda \in G_j(\pi) \\ \lambda' \subset \vspan{R,\lambda} }} 1 \\
    & = \sum_{\substack{\lambda' \in G_j(n,q) \\ R \not \in \lambda'}} v(\lambda') + \gauss{(n-1)-(j-1)}{j-(j-1)} \sum_{\substack{\lambda' \in G_j(n,q) \\ R \in \lambda'}} v(\lambda') \\
    & \equiv \sum_{\substack{\lambda' \in G_j(n,q) \\ R \not \in \lambda'}} v(\lambda') + \sum_{\substack{\lambda' \in G_j(n,q) \\ R \in \lambda'}} v(\lambda')
    = v \cdot \one \pmod p.
     \qedhere
\end{align*}
\end{proof}

\begin{rmk}\label{RmkProjHyperplane}
When constructing $\pr R \pi c$, what we are actually doing is projecting from the point $R$ onto a hyperplane $\pi$.
One could also view this as working in the quotient geometry of $\pg(n,q)$ through $R$.
This way we see that the choice of $\pi$ is not really relevant.
In other words, for any two choices of hyperplanes $\pi_1, \pi_2 \not \ni R$ in $\pg(n,q)$, the nature of the code words $\pr R {\pi_1} c$ and $\pr R {\pi_2} c$ will essentially stay the same.
More rigorously, there exists a collineation $\beta$ from $\pi_1$ to $\pi_2$ such that $\pr R {\pi_1} c (\lambda) = \pr R {\pi_2} c (\lambda^\beta)$, for every $\lambda \in G_j(\pi_1)$.
This collineation $\beta$ maps a subspace $\lambda$ of $\pi_1$ to $\vspan{R,\lambda} \cap \pi_2$.
The reason that we emphasize which hyperplane is considered is solely to obtain a natural embedding of $\supp(\pr R \pi c)$ in $\pg(n,q)$.

As such, when considering $\pr R \pi c$, we can, at any time and w.l.o.g., choose $\pi$ to be any other hyperplane not containing $R$.
\end{rmk}

Eventually, we will use this map to characterise low weight code words of $\mc_k(n,q)$.
However, we first need a few important lemmas, some of which are tedious to prove.

\begin{lm}
 \label{LmSmallestWtThreeKspaces}
Let $c \in \mc_k(n,q)$ be a linear combination of three $k$-spaces, which can't be written as a linear combination of at most two $k$-spaces.
Then $\wt(c) > W(k,q)$.
\end{lm}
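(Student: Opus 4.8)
The plan is to reduce the statement to a point-count governed by how the three $k$-spaces intersect. Write $c=\alpha\kappa_1+\beta\kappa_2+\gamma\kappa_3$ with $\alpha,\beta,\gamma\in\fp^*$ (all nonzero, since a vanishing coefficient would make $c$ a combination of at most two $k$-spaces). The value of $c$ at a point $P$ is the sum of the coefficients of the $\kappa_i$ containing $P$; hence a point lying in exactly one of the three spaces always lies in $\supp(c)$, a point in exactly two spaces $\kappa_i,\kappa_j$ drops out precisely when the pairwise sum vanishes, and a point in all three drops out precisely when $\alpha+\beta+\gamma=0$. Setting $d_{ij}:=\dim(\kappa_i\cap\kappa_j)$ and $t:=\dim(\kappa_1\cap\kappa_2\cap\kappa_3)$, inclusion–exclusion gives $|\kappa_1\cup\kappa_2\cup\kappa_3|=3\theta_k-\theta_{d_{12}}-\theta_{d_{13}}-\theta_{d_{23}}+\theta_t$, and $\wt(c)$ is this quantity minus the sizes of exactly those pairwise regions $\theta_{d_{ij}}-\theta_t$ and the triple region $\theta_t$ that cancel.

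First I would record two purely geometric constraints that make the estimate sharp, and which hold in any ambient $\pg(n,q)$ (this is why the threshold $W(k,q)$ does not depend on $n$): since the $\kappa_i$ are distinct $k$-spaces, $d_{ij}\le k-1$; and if two pairwise intersections, say $\kappa_1\cap\kappa_2$ and $\kappa_1\cap\kappa_3$, are $(k-1)$-dimensional, then they are two hyperplanes of $\kappa_1$ and therefore meet in dimension at least $k-2$, forcing $t\ge k-2$. Next I would split into cases by the \emph{cancellation pattern}, i.e. how many of $\alpha+\beta,\alpha+\gamma,\beta+\gamma$ vanish and whether $\alpha+\beta+\gamma$ vanishes, noting the arithmetic restriction that at most two pairwise sums can vanish unless $p=2$ with $\alpha=\beta=\gamma$ (all three pairwise sums then vanish while the triple sum does not). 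In each pattern I substitute the cancellations into the formula and minimise over the admissible $d_{ij},t$; because the $d_{ij}$ enter with negative coefficients the minimum pushes every $d_{ij}$ to $k-1$, which in turn forces $t\ge k-2$, attained by three $k$-spaces behaving like three general hyperplanes of a common $(k+1)$-space. This yields the clean lower bounds $\wt(c)\ge 3q^k$ (no pairwise cancellation), $\wt(c)\ge 3q^k-q^{k-1}+\theta_{k-2}$ (one), $\wt(c)\ge 3q^k-2q^{k-1}+\theta_{k-2}$ (two, odd characteristic), and $\wt(c)\ge 3q^k-3q^{k-1}+\theta_{k-2}$ (three, characteristic two).

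A direct comparison with Definition \ref{DefW} then finishes the generic argument: each bound exceeds $W(k,q)$ on the relevant set $Q_i$, with the two-cancellation and characteristic-two bounds meeting $W(k,q)$ only up to an additive constant, which is exactly why tightness is claimed only for $q\in Q_3\cup Q_4\cup Q_5$. I would organise this as a short table of inequalities, checking for instance that for $q\in Q_5$ the two-cancellation bound beats $3q^k-2q^{k-1}+\theta_{k-2}-1$ by exactly $1$, and similarly the characteristic-two bound versus $Q_4$.

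The delicate part, and the reason the lemma is flagged as tedious, is that this weight minimisation does not by itself suffice at the bottom of the range: the characteristic-two bound $3q^k-3q^{k-1}+\theta_{k-2}$ equals $2q^k-1$ when $q=2$, which is strictly below $W(k,2)=2q^k$. Here the genuineness hypothesis must actually be used rather than just the counting, because over $\mathbb F_2$ the sum $\kappa_1+\kappa_2+\kappa_3$ of three $k$-spaces is itself a single hyperplane whenever the three governing functionals have nonzero sum, so the offending minimal configuration is in fact a combination of at most one $k$-space and is excluded by hypothesis. I would therefore treat $q=2$ (and verify the remaining small values of $Q_1\cup Q_2$ by the same comparison) separately, showing that every configuration whose counted weight sinks to or below $W(k,q)$ is secretly expressible with at most two $k$-spaces. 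An alternative packaging of the whole proof is to induct on $n$, projecting via Lemma \ref{LmProj}(2),(4) from a point $R\notin\kappa_1\cup\kappa_2\cup\kappa_3$ chosen so that the three images remain distinct and the word stays a genuine three-space combination, with base case $n=k+1$ being precisely the contrapositive of Result \ref{ResLins}; that route relocates the obstacle to establishing the existence of a genuineness-preserving projection point, which is essentially the same small-$q$ difficulty and so I expect it to be the crux either way.
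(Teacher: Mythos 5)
Your proposal is correct, and in the decisive case it takes a genuinely different route from the paper. The paper also runs a case analysis on the intersection pattern, but it only counts points lying in \emph{exactly one} of the three $k$-spaces --- a count that is independent of the coefficients --- organised by $s=\dim(\kappa_1\cap\kappa_2\cap\kappa_3)$; this coarser count suffices in every configuration except the one you correctly identify as critical, namely three $k$-spaces pairwise meeting in $(k-1)$-spaces with $s=k-2$, which forces them into a common $(k+1)$-space $K$. There the paper simply restricts $c$ to $K$ and invokes Result \ref{ResLins}, so all of the coefficient-cancellation bookkeeping --- including the $q=2$ degeneration $\kappa_1+\kappa_2+\kappa_3=\kappa_4$, which the paper only records in a footnote to the proof of Theorem \ref{ThmPointsKSpaces} --- is absorbed into the cited characterisation of $\mc_k(k+1,q)$. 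You instead carry out the exact inclusion--exclusion with the cancellation patterns and minimise by hand, re-deriving the sharp lower bounds $3q^k-2q^{k-1}+\theta_{k-2}$ and $3q^k-3q^{k-1}+\theta_{k-2}$ that explain why $W(k,q)$ has the shape it does; your isolation of $q=2$ as the one value where pure counting dips below $W(k,q)$, and your observation that the offending configuration is secretly a single $k$-space (hence excluded by hypothesis), are exactly right. The trade-off: the paper's argument is shorter because it delegates the sharp case to a prior result, while yours is self-contained and makes the near-tightness of $W(k,q)$ on $Q_3\cup Q_4\cup Q_5$ transparent. One step you should not leave implicit is the minimisation itself: to rule out configurations with $t<k-2$ you need the full Grassmann constraint $d_{ij}+d_{il}\leq k+t$ (not only the special case you state) together with the convexity $\theta_{a+1}-\theta_a>\theta_a-\theta_{a-1}$, which is what pins the minimum at $d_{12}=d_{13}=d_{23}=k-1$ and $t=k-2$.
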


\begin{proof}
Let $\kappa_i$ ($i=1,2,3$) be three distinct $k$-spaces of which $c$ is a linear combination.
We write $\sigma = \bigcap_{i=1}^3 \kappa_i$, $K = \vspan{\kappa_1, \kappa_2, \kappa_3}$, and $s = \dim(\sigma)$.
A simple but tedious argument to prove this result is finding a lower bound on $\wt(c)$ that exceeds $W(k,q)$.
This is done by counting points that lie in precisely one of the three $k$-spaces $\kappa_i$, as such points are necessarily contained in $\supp(c)$.
As the proof involves a case-by-case analysis of the geometric nature of these $k$-spaces, we will omit most details of the easier cases.

If $s = k-1$, one can prove rather easily that $\wt(c) \in \set{3q^k, 3q^k + \theta_{k-1}}$.

If $s = k-2$, there are two cases to consider.
In the first case, we assume that two $k$-spaces intersect in $\sigma$.
Hence, each of these two $k$-spaces contains at least $\theta_k-\theta_{k-1}$ points not lying in any other of the three spaces.
As the third space adds at least $\theta_k-\theta_{k-1}-(\theta_{k-1}-\theta_{k-2})$ points of $\supp(c)$ we haven't considered before, we obtain $\wt(c) \geq 3q^k-q^{k-1}$.
In the second case, we assume that each two $k$-spaces intersect in a $(k-1)$-space.
As such, the set $\{\kappa_1,\kappa_2,\kappa_3\}$ forms an Erd\H{o}s-Ko-Rado set, implying that $K$ is $(k+1)$-dimensional.
Hence, we can consider the restriction of the code word $c$ to $K$ and rely on Result \ref{ResLins}.

Finally, assume that $s \leq k-3$.
Denote $\sigma_2 = \kappa_1 \cap \kappa_2$ and $\sigma_3 = \kappa_1 \cap \kappa_3$.
We know that $\dim(\sigma_2 \cap \sigma_3) = \dim(\sigma) = s$, and that $\dim\left(\vspan{\sigma_2, \sigma_3}\right) \leq \dim(\kappa_1) = k$.
Grassmann's identity implies that $\dim(\sigma_2) + \dim(\sigma_3) \leq k + s$.
We also know that the dimension of $\sigma_2$ and $\sigma_3$ are at most $k-1$.
Note that if $a \geq b$, then $\theta_a + \theta_b < \theta_{a+1} + \theta_{b-1}$.
Keeping this in mind, together with $\dim(\sigma_2) + \dim(\sigma_3) \leq k + s$, we know that $\sigma_2 \cup \sigma_3$ contains at most $\theta_{k-1} + \theta_{s+1} - \theta_{s} = \theta_{k-1} + q^{s+1} \leq \theta_{k-1} + q ^{k-2}$ points.
Hence, $\kappa_1$ contains at least $\theta_k - \theta_{k-1} - q^{k-2} = q^k - q^{k-2}$ points outside of $\kappa_2 \cup \kappa_3$.
Repeating this argument for each of the two other $k$-spaces, we obtain $\wt(c) \geq 3(q^k - q^{k-2})$.
\end{proof}

\begin{df}
Let $S$ be a point set in $\pg(n,q)$.
If a line $l \subseteq \pg(n,q)$ intersects $S$ in at most $2$ points, we will call $l$ a \emph{short secant} to $S$.
If $l$ intersects $S$ in at least $q$ points, we will call $l$ a \emph{long secant} to $S$.
\end{df}

\begin{lm}\label{LmShortAndLongSecants}
Let $c$ be a code word of $\mc_k(n,q)$ with $q \geq 5$ and $\wt(c) \leq W(k,q)$.
\begin{enumerate}
    \item[(1)] All lines in $\pg(n,q)$ are either short or long secants to $\supp(c)$.
    \item[(2)] $c \cdot s = \begin{cases}
     c \cdot \one \quad &\textnormal{if } s \textnormal{ is a } 2 \textnormal{-secant to } \supp(c) \textnormal{,}\\
     0 \quad &\textnormal{if } s \textnormal{ is a } q \textnormal{-secant to } \supp(c) \textnormal{.}
    \end{cases}$
\end{enumerate}
\end{lm}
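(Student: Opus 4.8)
The plan is to prove both statements simultaneously by induction on $n$, anchored at $n=k+1$ (where Result \ref{ResLins} applies) and decreasing the dimension one step at a time. The guiding observation is that both quantities of interest, $|\ell\cap\supp(c)|$ and $c\cdot\ell=\sum_{P\in\ell}c(P)$, depend only on the restriction $c|_\ell$; hence it suffices to transport this restriction faithfully into a space of dimension $n-1$ and invoke the induction hypothesis. The natural tool is the point projection: if I can choose a centre $R$ so that $\ell$ keeps exactly the same pattern of support points, then setting $c':=\pr R \pi c$, Lemma \ref{LmProj}(4) guarantees the weight does not increase, Lemma \ref{LmProj}(5) preserves $c\cdot\one$, and both statements descend.

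For the base case $n=k+1$, Lemma \ref{LmHull}(1) already gives $c\cdot\ell=c\cdot\one$ for \emph{every} line, so part (2) only requires recognising which lines are $2$-secants and which are $q$-secants. Result \ref{ResLins} says $c$ is a combination of at most two hyperplanes $\kappa_1,\kappa_2$, and a short case analysis (one hyperplane; $\kappa_1\cup\kappa_2$ when $c\cdot\one\neq 0$; the symmetric difference of $\kappa_1$ and $\kappa_2$ when $c\cdot\one=0$) shows that every line meets $\supp(c)$ in $0,1,2,q$ or $q+1$ points. This yields part (1) and pins down the two values in part (2).

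For $n\geq k+3$ I would reduce to $n-1$ by finding a \emph{clean plane} through $\ell$, i.e.\ a plane $\Pi\supseteq\ell$ with $\Pi\cap\supp(c)\subseteq\ell$. Each support point off $\ell$ lies in a unique plane through $\ell$, so a clean plane exists as soon as the number $\theta_{n-2}$ of planes through $\ell$ exceeds $\wt(c)$; since $n-2\geq k+1$ and $q\geq 5$ we have $\theta_{n-2}\geq\theta_{k+1}>q^{k+1}>W(k,q)$, so this is guaranteed. Choosing any $R\in\Pi\setminus\ell$ (automatically outside $\supp(c)$) and putting $c':=\pr R \pi c\in\mc_k(n-1,q)$, every line $\vspan{R,P}$ with $P\in\ell$ stays inside $\Pi$ and so meets $\supp(c)$ at most in $P$; hence $\ell$ maps to a line $\ell'$ with $c'|_{\ell'}=c|_\ell$, and the induction hypothesis applied to $(c',\ell')$ transfers both statements back to $(c,\ell)$.

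The hard part will be the case $n=k+2$, where the counting above fails: there are only $\theta_k<W(k,q)$ planes through $\ell$, so a clean plane need not exist. The redeeming feature is that now $n-k=2$, so Lemma \ref{LmHull}(1) forces $c\cdot\Pi=c\cdot\one$ for \emph{all} planes $\Pi$. A first reduction still covers the bulk of the range: the average number of off-$\ell$ support points over the $\theta_k$ planes through $\ell$ is at most $W(k,q)/\theta_k<3$, so some plane carries at most two of them; projecting from a point of that plane that avoids these two points perturbs the support pattern of $\ell$ in at most two points, and comparison with the already-proven base case $n=k+1$ rules out every medium secant with $5\leq|\ell\cap\supp(c)|\leq q-3$. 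I expect the genuinely delicate part---the tedium the text warns about---to be the handful of remaining boundary values $|\ell\cap\supp(c)|\in\{3,4,q-2,q-1\}$, which seem to need a direct argument combining the ``all planes are equal'' condition with the rigidity of $\mc_k(k+1,q)$ from Result \ref{ResLins}, rather than a clean dimension drop.
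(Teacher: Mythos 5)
Your skeleton is the same as the paper's: induction on $n$ with base case $n=k+1$ via Result \ref{ResLins}, then locating a plane $\sigma$ through the line that carries at most two support points off the line (your averaging count is exactly the paper's ``$3\theta_{n-2}+m>W(k,q)$'' count), and projecting from a point of $\sigma$ to compare with the induction hypothesis. Your observation that for $n\geq k+3$ one can even find a \emph{clean} plane ($\theta_{n-2}>W(k,q)$ when $n-2\geq k+1$), so that $c|_\ell$ transfers verbatim and both statements descend with no case analysis, is correct and is a genuine simplification over the paper, which treats all $n\geq k+2$ uniformly with the $(m+2)$-point plane.

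The problem is that the entire content of the lemma then sits in the step $n=k+2\to n=k+1$, and that is precisely where your proposal stops. Two concrete holes remain. First, for part (1) you only dispose of $5\leq m\leq q-3$; the values $m\in\{3,4,q-2,q-1\}$ are not ``a handful of stragglers'' but the actual crux, since a $\pm2$ perturbation can push them into the short/long range and kill the contradiction. The fix (which the paper states precisely, though it too leaves the verification to the reader) is to choose $R\in\sigma\setminus s$ so that the off-line support points of $\sigma$ project onto \emph{non-support} points of $s$, forcing the secant count to move upward (to $m$ or $m+1$) when $m\in\{3,4\}$ and downward (to $m$ or $m-1$) when $m\in\{q-2,q-1\}$; one must check such an $R$ exists in each of the subcases $|\sigma\cap\supp(c)|\in\{m,m+1,m+2\}$. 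Second, and more seriously, you give no argument for part (2) at $n=k+2$ when the plane is not clean: if $\sigma$ contains off-line support points $A,B$ then $\pr R \pi c\cdot s=c\cdot s+c(A)+c(B)$, so the scalar product does \emph{not} transfer, and ``all planes have $c\cdot\Pi=c\cdot\one$'' by itself does not recover it. The paper needs real work here: for $m=2$ it shows the non-clean situation simply cannot occur, and for $m=q$ it uses part (1) to force all off-line support points of $\sigma$ onto a single line $t$ through the unique non-support point $O$ of $s$, then computes $c\cdot s$ via $2$-secants $QP$ with $Q\in t\cap\supp(c)$ and $P\in s\setminus\{O\}$. Without these arguments the induction does not close.
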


\begin{proof}
We will prove this by induction on $n$.
If $n = k + 1$, then we know, by Result \ref{ResLins}, that $c$ is a linear combination of at most two $k$-spaces.
In particular, this implies that $\supp(c)$ is either equal to the empty set, a $k$ space, or the union or symmetric difference of two $k$-spaces, proving the first statement of the lemma.
If $s$ is a $2$-secant to $\supp(c)$, then $c$ must be a linear combination of precisely two $k$-spaces.
Then both $c \cdot s$ and $c \cdot \one$ equal the sum of the coefficients arising from this linear combination.
If $s$ is a $q$-secant to $\supp(c)$, then $c$ must be a scalar multiple of the difference of two distinct $k$-spaces.
A $q$-secant can only exist in this setting if
$c$ takes the same non-zero value in all but one point of $s$.
Hence, $c \cdot s = 0$, proving the second statement.

As such, let us assume that $n \geq k + 2$ and that the lemma is true for all code words in $\mc_k(n-1,q)$ with weight at most $W(k,q)$.
Note that, by Lemma \ref{LmProj} (4), the induction hypothesis implies that both statements of this lemma hold for the code word $\pr R \pi c$, for any point $R \notin \supp(c)$ and any hyperplane $\pi \not\ni R$.

Suppose that $s$ is an $m$-secant to $\supp(c)$ and suppose that every plane through $s$ intersects $\supp(c)$ in at least $m+3$ points.
Then $\wt(c) \geq 3\theta_{n-2} + m \geq 3\theta_k > W(k,q)$, a contradiction.
Hence, there exists a plane $\sigma$ such that $|\sigma \cap \supp(c)| \leq m+2$.
Let $\pi$ be a hyperplane intersecting $\sigma$ in $s$.\\

(1) Let $3 \leq m \leq q-1$.
    To find a contradiction and prove the first part of the lemma, we distinguish three cases depending on the value of $|\sigma\cap\supp(c)|\in\{m,m+1,m+2\}$.
    For each of these cases, one can find a point $R \in \sigma \setminus s$ such that $s$ contains precisely $m$ or $m+1$ points (if $m \neq q-1$), or $m$ or $m-1$ points (if $m \neq 3$) of $\supp(\pr R \pi c)$.
    Hence, each of these cases results in the existence of a secant to $\supp(\pr R \pi c)$ that is neither short nor long, contradicting the induction hypothesis.
    We leave the rather tedious details of this case-by-case proof to the reader.\\
    
(2) Let $m \in \{2,q\}$.
    The proof of the second statement can easily be obtained if we know that $\sigma\cap\supp(c) \subseteq s$.
    Indeed, if the latter would be the case, then $s$ would be an $m$-secant to $\supp(\pr R \pi c)$ for any choice of $R \in \sigma \setminus s$.
    Moreover, as all lines through $R$ in $\sigma$ contain at most one point of $\supp(c)$, we know that $c \cdot s = \pr R \pi c \cdot s$.
    By the induction hypothesis and Lemma \ref{LmProj} (5), we know that
    \[
        \pr R \pi c \cdot s = \begin{cases}
         \pr R \pi c \cdot \one = c \cdot \one \quad &\textnormal{if } s \textnormal{ is a } 2 \textnormal{-secant to } \supp(c) \textnormal{,}\\
         0 \quad &\textnormal{if } s \textnormal{ is a } q \textnormal{-secant to } \supp(c) \textnormal{.}
        \end{cases}
    \]
    
    So let us assume, on the contrary, that $|\sigma \cap \supp(c)| \in \{m+1,m+2\}$.
    
    If $m = 2$, we can find a point $R \in \sigma \setminus (s \cup \supp(c))$ such that $s$ contains precisely $|\sigma \cap \supp(c)| < q$ points of $\supp(\pr R \pi c)$, contradicting the assumptions.
    
    Let $m = q$ and let $O$ be the unique point in $s \setminus \supp(c)$.
    Let $t$ be a line of $\sigma$ through $O$ containing a point of $(\sigma \cap \supp(c)) \setminus s$.
    Then all points of $(\sigma \cap \supp(c)) \setminus s$ have to lie on $t$, as else we can find a $3$-secant to $\supp(c)$ in $\sigma$, contradicting (1).
    In this way, if we choose $Q \in t \cap \supp(c)$, $QP$ is a $2$-secant to $\supp(c)$ for every choice of $P \in s \setminus \{O\}$.
    As we already proved this statement in case $m = 2$, we know that all values $c \cdot QP$ are the same, for every choice of $P \in s \setminus \{O\}$.
    As $c \cdot QP = c(Q) + c(P)$, this means that $c$ takes the same value in every point of $s \setminus \{O\}$, resulting in $c \cdot s = 0$.
\end{proof}

\begin{lm}\label{LmPointSet01qq+1}
 Assume that $S$ is a point set in $\pg(n,q)$, $q \geq 4$, with the property that every line intersects $S$ in $0$, $1$, $q$ or $q+1$ points.
 Then there exists a hyperplane $H$ in $\pg(n,q)$ such that either $S \subseteq H$ or $S^c \subseteq H$, where $S^c$ denotes the complement of $S$ in $\pg(n,q)$.
\end{lm}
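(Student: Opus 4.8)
The plan is to induct on $n$, reducing from $\pg(n,q)$ to its quotient geometry through a well-chosen point. Throughout, call a line \emph{long} if it meets $S$ in $q$ or $q+1$ points and \emph{short} if it meets $S$ in $0$ or $1$ points; by hypothesis every line is one or the other, and since the property is invariant under replacing $S$ by its complement $S^c$ (a line meets $S^c$ in $q+1$ minus its number of points of $S$), I may use this symmetry freely. If $S$ or $S^c$ is empty the statement is trivial, and if $S$ has no long secant then any two of its points would span a short secant, forcing $|S| \leq 1$ and hence $S$ inside a hyperplane (symmetrically for $S^c$); so I may assume $S$ and $S^c$ are both nonempty and each carries a long secant.

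For the base case $n=2$ I would argue by size. If $S$ is not contained in a line but has a long secant $\ell$, pick $P \in S \setminus \ell$: each of the $\geq q$ points $x \in \ell \cap S$ yields a line $Px$ carrying at least two points of $S$, hence a long secant through $P$, and these $\geq q$ lines meet only in $P$, so $|S| \geq 1 + q(q-1) = q^2 - q + 1$. Applying the same bound to $S^c$ and adding gives $q^2 + q + 1 = |S| + |S^c| \geq 2(q^2 - q + 1)$, i.e.\ $q^2 - 3q + 1 \leq 0$, impossible for $q \geq 4$. Hence in $\pg(2,q)$ one of $S$, $S^c$ lies in a line.

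For the inductive step ($n \geq 3$) the key construction is as follows. Fix a point $R \notin S$ and pass to the quotient geometry $\pg(n,q)/R \cong \pg(n-1,q)$, whose points are the lines through $R$ and whose lines are the planes through $R$. Define $\bar S$ to be the set of lines through $R$ that are $q$-secants of $S$ (note $R \notin S$ forces every line through $R$ to meet $S$ in $0$, $1$ or $q$ points). I claim $\bar S$ again satisfies the hypothesis in $\pg(n-1,q)$: for a plane $\Pi \ni R$, the case $n=2$ applied inside $\Pi$ shows that $S \cap \Pi$ or its complement in $\Pi$ lies on a line, and a short case-check then gives that the number of $q$-secants of $S$ through $R$ inside $\Pi$ is $0$ or $1$ in the first case and $q$ or $q+1$ in the second, i.e.\ always in $\{0,1,q,q+1\}$. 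By induction, $\bar S$ or its complement lies in a hyperplane $\bar H$ of $\pg(n-1,q)$, corresponding to a hyperplane $H$ of $\pg(n,q)$ through $R$. Unwinding the two cases: if $\bar S^c \subseteq \bar H$, then every line through $R$ off $H$ is a $q$-secant, so every point off $H$ lies in $S$ and thus $S^c \subseteq H$; if instead $\bar S \subseteq \bar H$, then every line through $R$ off $H$ meets $S$ in at most one point, which yields only the weaker bound $|S \setminus H| \leq q^{n-1}$.

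The main obstacle is precisely this second (``$\bar S \subseteq \bar H$'') alternative, which does not by itself place $S$ in a hyperplane. I would resolve it by running the construction twice and playing the two bounds against each other: apply it to $S$ from a center $R' \in S^c$ and to $S^c$ from a center $R \in S$. If either run lands in the favourable case we are done ($S^c \subseteq H'$, respectively $S \subseteq H$). Otherwise both runs give the weak bound, namely $|S \setminus H'| \leq q^{n-1}$ and $|S^c \setminus H| \leq q^{n-1}$, whence $|S| \leq \theta_{n-1} + q^{n-1}$ while $|S| = \theta_n - |S^c| \geq q^n - q^{n-1}$; combining these forces $q^n - 2q^{n-1} \leq \theta_{n-1}$, which fails for $q \geq 4$ since then $q^{n-1}(q-2) \geq 2q^{n-1} > \theta_{n-1}$. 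This contradiction shows at least one run is favourable, completing the induction.
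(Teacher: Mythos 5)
Your proof is correct, but it runs in the opposite direction from the paper's. The paper restricts to hyperplanes: by induction, every hyperplane section satisfies the dichotomy, so if $S$ spans $\pg(n,q)$ one can choose a hyperplane $\pi$ spanned by points of $S$, conclude that $S^c \cap \pi$ sits inside an $(n-2)$-space, and count the long secants joining an external point $P \in S$ to the $\geq q^{n-1}$ points of $S$ in $\pi$ to get $|S| \geq q^{n-1}(q-1)+1$; doing the same for $S^c$ and adding gives a contradiction with $|S|+|S^c| = \theta_n$ when $q \geq 4$, so one of the two sets fails to span. You instead project from a point $R \notin S$ onto the quotient $\pg(n-1,q)$, verify (via a planar case analysis resting on your $n=2$ base case) that the set of $q$-secants through $R$ inherits the $\{0,1,q,q+1\}$ intersection property, and then repair the asymmetry between the two outcomes of the induction by projecting from two centres, one in $S^c$ and one in $S$, and playing the resulting bounds $|S\setminus H'| \leq q^{n-1}$ and $|S^c \setminus H| \leq q^{n-1}$ against $|S|+|S^c|=\theta_n$. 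Both arguments are sound and both ultimately close with the same kind of cardinality contradiction; the paper's version is shorter because the spanning dichotomy does for free what your two-centre trick and the planar case check accomplish by hand, whereas your version has the mild aesthetic advantage of reusing the projection-onto-a-quotient machinery that drives the rest of the paper. One small remark: your base case is $n=2$ proved by counting, while the paper starts at the trivial case $n=1$ and lets the inductive step cover the plane; your explicit $n=2$ argument is needed anyway for your verification that the projected set satisfies the hypothesis, so nothing is wasted.
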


\begin{proof}
We prove this by induction on $n$.
Note that it is trivial for $n=1$.
Now assume that it holds in $\pg(n-1,q)$, we will prove that it holds in $\pg(n,q)$.
The induction hypothesis implies that for every hyperplane $\pi$ of $\pg(n,q)$, either $S \cap \pi$ or $S^c \cap \pi$ is contained in an $(n-2)$-space of $\pi$.
If $S$ spans $\pg(n,q)$, then we can take a hyperplane $\pi$ spanned by $n$ points of $S$ and a point $P \in S \setminus \pi$.
By the induction hypothesis, $S^c \cap \pi$ is contained in an $(n-2)$-space in $\pi$.
Therefore, there are at least $q^{n-1}$ lines through $P$ intersecting $\pi$ in a point of $S$.
These lines contain at least $q$ points of $S$, yielding that $|S| \geq q^{n-1}(q-1) + 1$.
Note that this lemma is self-dual in the sense that if we replace $S$ by $S^c$, the lemma stays the same.
Thus, if $S^c$ spans $\pg(n,q)$, then $|S^c| \geq q^{n-1}(q-1) + 1$.
Hence, if both $S$ and $S^c$ span $\pg(n,q)$, then
\[
 \theta_n = |S| + |S^c| \geq 2(q^{n-1}(q-1) + 1),
\]
a contradiction if $q \geq 4$.
Therefore, either $S$ or $S^c$ is contained in a hyperplane.
%The proof is analogous to that of \cite[Lemma 3.2.4]{lins}.
\end{proof}

\begin{center}
 \begin{tikzpicture}[scale=1.5]
 \draw (.5,0) -- (5.5,0) -- (7,3) -- (2,3) -- (.5,0);
  \draw (6,.25) node{$\pi$};
 \draw (3,1.5) circle [x radius=1, y radius=1.1, rotate=-30];
  \draw (2,.7) node{$\kappa_1$};
 \draw (4,1.5) circle [x radius=1, y radius=1.1, rotate=30];
  \draw (5,.6) node{$\kappa_2$};
  \draw (3.5,1) node{$\sigma$};
 \draw (3.5,4.5) node{\textbullet};
  \draw (3.5,4.9) node{$R$};
 \draw[dashed] (3.5,4.5) -- (3,.4);
 \draw[dashed] (3.5,4.5) -- (4,.4);
 \draw[dashed] (3.5,4.5) -- (2,1.5);
 \draw[dashed] (3.5,4.5) -- (5,1.5);
 \draw[dashed] (3.5,4.5) -- (3,2.6);
 \draw[dashed] (3.5,4.5) -- (4,2.6);
 \draw[dashdotted] (3.75,3) circle [x radius=.5, y radius=.55, rotate=30];
  \draw (2.5,3.25) node{$\lambda_1$};
 \draw[dashdotted] (3.25,3) circle [x radius=.5, y radius=.55, rotate=-30];
  \draw (4.5,3.23) node{$\lambda_2$};
 \draw (3.5,2.75) node {$\tau$};
\end{tikzpicture}
\end{center}

\begin{lm}\label{LmAtLeastHalfPointsSameValue}
 Let $c$ be a code word of $\mc_k(n,q)$ with $q \geq 5$ and $\wt(c) \leq W(k,q)$, and assume that all code words of $\mc_k(n-1,q)$ with weight at most $W(k,q)$ are linear combinations of at most two $k$-spaces.
 Consider a point $R \notin \supp(c)$ and a hyperplane $\pi \not \ni R$; let $\kappa_1, \kappa_2 \in G_k(\pi)$, $\kappa_1\neq\kappa_2$, and let $\alpha_1, \alpha_2 \in \mathbb{F}_p^*$.
 Define $\lambda_i := \vspan{R, \kappa_i}$ and $\tau := \lambda_1\cap\lambda_2$.
 Assume that precisely one of the following holds:
 \begin{enumerate}
     \item[(1)] $q$ is even and $\pr R \pi c = \kappa_1$,
     \item[(2)] $\pr R \pi c = \alpha_1\kappa_1 + \alpha_2\kappa_2$.
 \end{enumerate}
 Then there exists a $k$-space $H$ such that more than $\frac{1}{2}\theta_{k}$ points of $H$ have the same non-zero value in $c$.
\end{lm}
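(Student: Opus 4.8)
The plan is to work entirely inside the $(k+1)$-space $\lambda_1 = \vspan{R,\kappa_1}$, to single out a set of points on which $c$ is forced to take the constant value $\alpha_1$, and to show via Lemma \ref{LmPointSet01qq+1} that this set is confined to a hyperplane of $\lambda_1$, i.e.\ a $k$-space. The fibres of $\pr{R}{\pi}{\cdot}$ are exactly the lines through $R$, and for a point $P\in\pi$ the value $\pr{R}{\pi}{c}(P) = c\cdot\vspan{R,P}$ is the sum of the values of $c$ along the line $\vspan{R,P}$. The first thing I would record is the scalar identity from Lemma \ref{LmProj}(5): since $\kappa_i\cdot\one = \theta_k\equiv 1 \pmod p$, in case (2) we get $c\cdot\one = \pr{R}{\pi}{c}\cdot\one = \alpha_1+\alpha_2$ (and $c\cdot\one = 1$ in case (1)). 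This relation is the engine that excludes the unwanted secant types below. Throughout I may apply Lemma \ref{LmShortAndLongSecants} directly to $c$, since $q\geq 5$ and $\wt(c)\leq W(k,q)$.

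Next I would classify the lines through $R$ inside $\lambda_1$. Fix $P\in\kappa_1\setminus\kappa_2$; then $c\cdot\vspan{R,P}=\alpha_1\neq 0$. As $R\notin\supp(c)$, the line $\vspan{R,P}$ is not a $(q+1)$-secant; by Lemma \ref{LmShortAndLongSecants} a $q$-secant would force $c\cdot\vspan{R,P}=0$, and a $2$-secant would force $\alpha_1 = c\cdot\one = \alpha_1+\alpha_2$, i.e.\ $\alpha_2=0$ --- both impossible. Hence $\vspan{R,P}$ is a $1$-secant whose unique point of $\supp(c)$ has value $\alpha_1$. Consequently every point of $\supp(c)\cap(\lambda_1\setminus\tau)$ carries the value $\alpha_1$ and projects onto $\kappa_1\setminus\kappa_2$, so there are exactly $\theta_k-\theta_{\dim(\kappa_1\cap\kappa_2)}\geq q^k$ of them. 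In case (1) the same argument uses $p=2$: a $2$-secant would have line-sum $0\neq 1$, which forces all lines over $\kappa_1$ to be $1$-secants and produces $\theta_k$ points of value $1$.

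With this in hand I would apply Lemma \ref{LmPointSet01qq+1} inside $\lambda_1\cong\pg(k+1,q)$ to the set $S$ of points of value $\alpha_1$. Once every line of $\lambda_1$ is shown to meet $S$ in $0$, $1$, $q$ or $q+1$ points, the lemma places $S$ or its complement in a hyperplane; but $|S|\leq\wt(c)\leq W(k,q)<q^{k+1}=\theta_{k+1}-\theta_k$, so the complement cannot fit in a hyperplane, and therefore $S$ lies in a hyperplane $H$ of $\lambda_1$ --- a $k$-space. Since $|S|\geq q^k>\frac12\theta_k$, more than half of the points of $H$ share the value $\alpha_1$, as required. The lines through $R$ give intersection $0$ or $1$ by the previous paragraph; for a long secant $m\not\subseteq\tau$ one uses that $m$ meets $\tau$ in at most one point while all its other points of $\supp(c)$ lie in $\lambda_1\setminus\tau$ and hence in $S$, the identity $c\cdot m=0$ pinning down the remaining point and yielding $q$ or $q+1$; for a short $2$-secant, $c\cdot m = c\cdot\one = \alpha_1+\alpha_2$ forbids two $S$-points as soon as $\alpha_1\neq\alpha_2$.

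I expect the genuine obstacle to be precisely the verification of this clean $\{0,1,q,q+1\}$ spectrum in the two places where the above arithmetic degenerates: lines lying inside the overlap $\tau=\lambda_1\cap\lambda_2$, where support can accumulate and the crude bounds no longer separate the secant sizes, and the degenerate configuration $\alpha_1=\alpha_2$, in which a short $2$-secant through two value-$\alpha_1$ points becomes arithmetically consistent and must be ruled out by an independent argument (for instance by exploiting the symmetric projection onto $\lambda_2$, or by an induction on the ambient dimension applied to the cone over $\kappa_1\cap\kappa_2$). Case (1), being in characteristic $2$ with a single $k$-space, is the clean prototype where no $2$-secant can occur and the spectrum is immediate; the real effort lies in bootstrapping that clean picture through the overlap and the $\alpha_1=\alpha_2$ collision in the general case (2).
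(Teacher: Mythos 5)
Your frame is the same as the paper's: first show that every line through $R$ in $\lambda_1 \setminus \tau$ is a tangent to $\supp(c)$ whose support point has value $\alpha_1$ (this is the paper's Observation \ref{ObsTangent}, proved exactly as you do, by excluding $0$-, $2$- and $q$-secants through the identity $c \cdot \one = \alpha_1 + \alpha_2$), and then feed a suitable point set of $\lambda_1$ into Lemma \ref{LmPointSet01qq+1} to trap it in a $k$-space $H$, discarding the ``complement in a hyperplane'' branch by a weight count. Up to there you are on track. But the step you defer as ``the genuine obstacle'' --- controlling $2$-secants when the arithmetic $c\cdot m = \alpha_1+\alpha_2$ no longer separates them from two value-$\alpha_1$ points, i.e.\ when $\alpha_1=\alpha_2$, and controlling lines meeting $\tau$ --- is precisely the substantive content of the lemma, and you do not supply it. The paper's Observation \ref{ObsTwoSecant} states that \emph{every} $2$-secant to $\supp(c)$ contained in $\lambda_1$ lies inside $\tau$, and its proof is not a local arithmetic check: one assumes a $2$-secant $s \not\subseteq \tau$, re-chooses $\pi$ through a point $S \in s\setminus\tau$, projects from a second point $R_1 \in s \setminus (\supp(c)\cup\tau)$, shows $\pr{R_1}{\pi}{c}$ takes the two distinct non-zero values $\alpha_1+\alpha_2$ (at $S$) and $\alpha_1$ (at $T = RR_1\cap\kappa_1$), hence must be supported on two \emph{new} $k$-spaces, which produces a third $(k+1)$-space carrying at least $q^k - 2q^{k-1}+\theta_{k-2}$ further points of $\supp(c)$ and the contradiction $\wt(c) \geq 3q^k - 2q^{k-1}+\theta_{k-2} > W(k,q)$. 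Your suggested alternatives (``symmetric projection onto $\lambda_2$'', ``induction on the cone over $\kappa_1\cap\kappa_2$'') are not arguments; note that the second projection point in the paper's proof is chosen on the offending secant itself, not symmetrically.

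Two further points. First, the paper sidesteps your $\tau$-difficulty by applying Lemma \ref{LmPointSet01qq+1} to $\mathcal S := (\lambda_1\setminus\tau)\cap\supp(c)$ rather than to the full set of value-$\alpha_1$ points of $\lambda_1$; with Observation \ref{ObsTwoSecant} in hand, lines inside $\tau$ then meet $\mathcal S$ in $0$ points and need no analysis, whereas your set $S$ leaves long secants through $R$ inside $\tau$ entirely uncontrolled. Second, your claim that case (1) is ``immediate'' does not match the paper: there the analogue (Observation \ref{ObsTwoSecant2}, asserting that \emph{no} $2$-secant lies in $\lambda_1$ at all) is proved by a separate counting argument over the planes $\sigma_P = \vspan{s,P}$ for the $q^k$ points $P$ of $\supp(c)$ in a third space $\lambda_3$, ending in $\wt(c) \geq 4q^k + 3\theta_{k-1} > W(k,q)$. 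So while your outline correctly locates the difficulty, the proposal as written proves only the easy half of the lemma.
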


\begin{proof}
Remark that, by Lemma \ref{LmProj} (2, 4), the assumptions imply that $\pr {R'} {\pi'} c$ is a linear combination of at most two $k$-subspaces of $\pi'$, for every point $R' \notin \supp(c)$ and every hyperplane $\pi' \not\ni R$.

First, assume that (2) holds.
We will make two observations, the first one is stated as follows.
\begin{obs}\label{ObsTangent}
 Every line in $\lambda_1 \setminus \tau$ through $R$ is tangent to $\supp(c)$.
\end{obs}
Indeed, take such a line $l$.
We know that $\alpha_1 = \pr R \pi c (l \cap \pi) = c \cdot l$.
By Lemma \ref{LmShortAndLongSecants}, $l$ is either a short or a long secant to $\supp(c)$.
By that same lemma, $l$ cannot be a $0$- or a $q$-secant, as else $\alpha_1 = 0$.
Finally, $l$ cannot be a $2$-secant either, as else, by Lemma \ref{LmShortAndLongSecants} and Lemma \ref{LmProj}, $\alpha_1 = c \cdot l = c \cdot \one = \pr R \pi c \cdot \one = \alpha_1 + \alpha_2$, which would imply that $\alpha_2=0$.

\begin{obs}\label{ObsTwoSecant}
    All $2$-secants to $\supp(c)$ in $\lambda_1$ are contained in $\tau$.
\end{obs}
Let $s$ be a $2$-secant to $\supp(c)$ in $\lambda_1$ that is not contained in $\tau$.
Take a point $S \in s \setminus \tau$.
By Remark \ref{RmkProjHyperplane}, we can choose $\pi$ to be a hyperplane not through $R$, intersecting $s$ in $S$.
Note that this also means that $s$ intersects $\kappa_1$ in $S$.
As $q > 2$, we can choose a point $R_1 \in s \setminus(\supp(c) \cup \tau)$.
By Observation \ref{ObsTangent}, as $R_1 \in \lambda_1 \setminus \tau$, $RR_1$ is tangent to $\supp(c)$ and hence the unique point of $\supp(c)$ on $RR_1$ must have value $\alpha_1$.
Denote $T = RR_1 \cap \kappa_1$.

In this way, we can see that
\begin{itemize}
    \item $\pr {R_1} \pi c (S) = \alpha_1 + \alpha_2$, by Lemma \ref{LmShortAndLongSecants} and Lemma \ref{LmProj} (5).
    \item $\pr {R_1} \pi c (T) = \alpha_1$, implying in particular that $\pr {R_1} \pi c \neq \zero$.
\end{itemize}

Therefore, $\pr {R_1} \pi c$ takes distinct non-zero values and must also be a linear combination of exactly two distinct $k$-spaces.

It's clear that $\pr {R_1} \pi c$ and $\pr R \pi c$ cannot share the same $k$-subspaces of $\pi$, as else the points $S, T \in \kappa_1 \setminus \tau$ must have the same value w.r.t.\ $\pr {R_1} \pi c$, resulting in $\alpha_1 = \alpha_1 + \alpha_2$, a contradiction.
Hence, we can find a $k$-space $\kappa_3 \notin \{\kappa_1, \kappa_2\}$ in $\pi$ containing, by Observation \ref{ObsTangent}, at least $q^k$ points in a $k$-dimensional affine subspace, each connected to $R_1$ by a tangent line to $\supp(c)$.

One can observe at least $q^k - 2q^{k-1} + \theta_{k-2}$ points of $\supp(c)$ outside of $\lambda_1 \cup \lambda_2$.
Hence, we get the following contradiction: $\wt(c) \geq |(\lambda_1 \cup \lambda_2 )\cap \supp(c)| + |\lambda_3 \setminus (\lambda_1 \cup \lambda_2) \cap \supp(c)| \geq 2q^k + q^k - 2q^{k-1} + \theta_{k-2} = 3q^k - 2q^{k-1} + \theta_{k-2} > W(k,q)$. \\

Define $\mathcal{S} := (\lambda_1 \setminus \tau) \cap \supp(c)$.
By Lemma \ref{LmShortAndLongSecants}, Observation \ref{ObsTwoSecant} and Lemma \ref{LmPointSet01qq+1}, there exists a $k$-space $H$ in $\lambda_1$ such that either $\mathcal{S} \subseteq H$ or $\big(\lambda_1 \setminus \mathcal{S}\big) \subseteq H$.
The latter would imply that $\wt(c) \geq |\lambda_1 \setminus (H \cup \tau)| \geq q^{k+1} - q^k > W(k,q)$ as $q \geq 5$, a contradiction.
Thus, $\mathcal{S} \subseteq H$ must be valid.
By Observation \ref{ObsTangent}, all $q^k > \frac{1}{2}\theta_k$ points in $\mathcal{S}$ have non-zero value $\alpha_1$ in $c$, proving the lemma. \\

Now assume that (1) holds.
The proof stays mainly the same, except for the proof of Observation \ref{ObsTwoSecant2}; we will indicate what arguments need to be changed or added in order to keep all proofs valid.
In general, every instance of $\alpha_1$ and $\alpha_2$ can be replaced by $1$, as $q$ is even, and every instance of $\kappa_2$ and $\tau$ need to be replaced by $\emptyset$.
As such, Observation \ref{ObsTangent} becomes the following statement:
\begin{obs}\label{ObsTangent2}
    Every line in $\lambda_1$ through $R$ is tangent to $\supp(c)$.
\end{obs}
This can be proven using exactly the same arguments as before: such a line $l$ can only be a tangent line or a $2$-secant, and if $l$ is a $2$-secant, we would obtain $1 = \alpha_1 = c \cdot l = 1 + 1 = 0$, as $q$ is even, a contradiction.

Observation \ref{ObsTwoSecant} changes to the following:
\begin{obs}\label{ObsTwoSecant2}
    There are no $2$-secants to $\supp(c)$ contained in $\lambda_1$.
\end{obs}
We can repeat all notations and arguments used to prove Observation \ref{ObsTwoSecant} (keeping in mind that $\tau$ is replaced by $\emptyset$) and prove that there exists a $k$-space $\kappa_3\neq\kappa_1$ in $\pi$ in which, by Observation \ref{ObsTangent2}, each point is connected to $R_1$ by a tangent line to $\supp(c)$.

Remark that, as $q$ is even, $\pr {R_1} \pi c (S) = 0$, implying that $S \notin \kappa_3$ as $\pr {R_1} \pi c (Q) = 1$ for every $Q \in \kappa_3$.
As such, for each point $P$ of the at least $\theta_k - \theta_{k-1} = q^k$ points of $\supp(c)$ in $\lambda_3 := \vspan{R_1, \kappa_3}$ not contained in $\lambda_1$, the plane $\sigma_P := \vspan{s, P}$ intersects $\lambda_1$ in the $2$-secant $s$ and $\lambda_3$ in the tangent line $R_1P$ (Observation \ref{ObsTangent2}).
If $|\sigma_P \cap \supp(c)| \leq 4$, then a clever choice of a point $R_2 \in \sigma_P \setminus \supp(c)$ (and a hyperplane $\pi_2 \not \ni R_2$) will result in the existence of a $|\sigma_P \cap \supp(c)|$-secant to $\supp(\pr {R_2} {\pi_2} c)$, contradicting Lemma \ref{LmShortAndLongSecants} as $q \geq 5$.
    
In conclusion, for every such point $P$, we find at least $2$ points of $\supp(c)$ outside of $\lambda_1 \cup \lambda_3$ by considering the plane $\sigma_P$.
As $R_1P$ is tangent to $\supp(c)$, each choice of such a $P$ will result $2$ extra points we haven't considered before.
Hence, $\wt(c) \geq |\lambda_1 \cap \supp(c)| + 3|(\lambda_3 \setminus \lambda_1) \cap \supp(c)| \geq \theta_k + 3q^k = 4q^k + 3\theta_{k-1} > W(k,q)$, a contradiction. \\

Given Observation \ref{ObsTangent2} and \ref{ObsTwoSecant2}, we can repeat the same arguments as before to conclude the proof.
\end{proof}

\setcounter{obs}{0}

\begin{thm}
\label{ThmPointsKSpaces}
If $c$ is a code word of $\mc_k(n,q)$, with $\wt(c) \leq W(k,q)$, then $c$ is a linear combination of at most two $k$-spaces.
Moreover, if $q \in Q_3 \cup Q_4 \cup Q_5$, then this bound is tight.
\end{thm}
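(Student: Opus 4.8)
The plan is to induct on $n$, taking the base case $n=k+1$ for free from Result \ref{ResLins}. For the inductive step I would assume $q \ge 5$ (the three remaining values $q \in \{2,3,4\}$ lie outside the scope of Lemmas \ref{LmShortAndLongSecants} and \ref{LmAtLeastHalfPointsSameValue} and need a separate, more elementary treatment, exploiting that there $W(k,q)=2q^k$ is small enough to pin $c$ down via the minimum-weight characterisation Result \ref{ResLowestWeight}), take $n \ge k+2$, and adopt as the outer induction hypothesis that every code word of $\mc_k(n-1,q)$ of weight at most $W(k,q)$ is a linear combination of at most two $k$-spaces. This is exactly the standing hypothesis of Lemma \ref{LmAtLeastHalfPointsSameValue}, so that lemma becomes available throughout the step, and its geometric input (Lemmas \ref{LmShortAndLongSecants} and \ref{LmPointSet01qq+1}) is already set up for $q\ge 5$.

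Inside the step I would run a secondary induction on $\wt(c)$. Given $c \in \mc_k(n,q)$ with $\wt(c)\le W(k,q)$ that is not already a scalar multiple of a single $k$-space, the first task is to produce a \emph{good projection direction}: a point $R \notin \supp(c)$ and a hyperplane $\pi \not\ni R$ for which $\pr R \pi c$ is, by Lemma \ref{LmProj} (2) and (4) together with the outer hypothesis, a linear combination of at most two $k$-spaces of $\pi$, and is moreover either a genuine combination of two distinct $k$-spaces (usable for any $q$) or a single $k$-space (usable when $q$ is even). Feeding this configuration into Lemma \ref{LmAtLeastHalfPointsSameValue} yields a $k$-space $H$ on which more than $\tfrac12\theta_k$ points carry one common non-zero value $\alpha$ of $c$.

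With $H$ and $\alpha$ in hand I peel: set $c' := c - \alpha H \in \mc_k(n,q)$. Since the more than $\tfrac12\theta_k$ points of $H$ of value $\alpha$ are annihilated while at most $\tfrac12\theta_k$ points of $H$ are newly created in the support, a direct point count gives $\wt(c') < \wt(c) \le W(k,q)$, so the secondary induction hypothesis makes $c'$ a linear combination of at most two $k$-spaces. Hence $c = \alpha H + c'$ is a linear combination of at most three $k$-spaces, and Lemma \ref{LmSmallestWtThreeKspaces} finishes: a code word genuinely requiring three $k$-spaces has weight exceeding $W(k,q)$, contradicting $\wt(c)\le W(k,q)$, so $c$ reduces to at most two. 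For the tightness assertion when $q \in Q_3 \cup Q_4 \cup Q_5$, I would simply transport the extremal word furnished by Result \ref{ResLins} inside a $(k+1)$-dimensional subspace $\Sigma$ of $\pg(n,q)$: read in $V(0,n,q)$ it remains a word of $\mc_k(n,q)$ of the same weight $>W(k,q)$ that is a combination of three $k$-spaces irreducible to two.

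The main obstacle I anticipate is securing the good projection direction and disposing of the degenerate configurations it must avoid, rather than the peeling mechanism itself. If $q$ is odd and every nonzero projection $\pr R \pi c$ happens to be a scalar multiple of a single $k$-space, Lemma \ref{LmAtLeastHalfPointsSameValue} does not apply, and one must argue directly — using that all lines are short or long secants (Lemma \ref{LmShortAndLongSecants}) and that the scalar $\alpha = c \cdot \one$ is forced constant across projections by Lemma \ref{LmProj} (5) — that $\supp(c)$ is itself a single $k$-space, contradicting our working assumption. Symmetrically, the possibility that all projections vanish (so that $c$ behaves like a hull element) must be excluded by the weight bound before any peeling can start. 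These case distinctions, and verifying that a usable $R$, $\pi$, $\kappa_1$, $\kappa_2$ genuinely exist, are where the real work concentrates.
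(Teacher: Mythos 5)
Your skeleton is the paper's: induction on $n$ with base case Result \ref{ResLins}, classification of the projections $\pr{R}{\pi}{c}$ via Lemma \ref{LmProj} and the induction hypothesis, Lemma \ref{LmAtLeastHalfPointsSameValue} to produce a $k$-space more than half of whose points share a non-zero value, peeling that $k$-space off, and Lemma \ref{LmSmallestWtThreeKspaces} to kill genuine three-space combinations; your secondary induction on $\wt(c)$ is exactly the paper's ``minimal counterexample plus Observation'' device, and your tightness argument is the paper's. However, the two places where you defer are precisely where the paper's proof does its real work, and in both your proposal is either wrong or empty.

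First, $q \in \{2,3,4\}$ cannot be dispatched by Result \ref{ResLowestWeight}: that result only characterises words of weight exactly $\theta_k$, whereas here you must handle every weight up to $W(k,q)=2q^k$ and show such a word is a scalar multiple of one $k$-space or of the difference of two $k$-spaces through a common $(k-1)$-space. The paper does not excise these values; it keeps them in the main induction and settles them with a dedicated count (the $x_1,x_2,x_3$ system in Case 1, which exploits that a point $P$ with $\pr{R'}{\pi'}{c}(P)=0$ forces $|PR'\cap\supp(c)|\equiv 0\pmod p$, hence at least $p-1$ further support points off $\pi'$) and a separate projection argument in Case 2. Second, your acknowledged ``main obstacle'' --- all non-zero projections being single $k$-spaces with $q$ odd, where hypothesis (1) of Lemma \ref{LmAtLeastHalfPointsSameValue} is unavailable --- is left unresolved; note this occurs for \emph{every} odd $q$, not just small ones, so it is not a fringe case. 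The paper's resolution is again the $x_1,x_2,x_3$ count: after showing all support points share the value $c\cdot\one$ and that (P0) never occurs, it takes a hyperplane $\pi'\supseteq\vspan{R,\kappa}$, projects from $R'\notin\pi'\cup\supp(c)$, and shows the resulting $k$-space $\kappa'$ meets $\supp(c)$ in more than $\tfrac12\theta_k$ points, contradicting the peeling observation --- it does not (and need not) show $\supp(c)$ is literally a single $k$-space, which is what you propose to prove. So the architecture matches, but as written the proof has genuine holes at both of these points.
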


\begin{proof}
The proof will be done by induction on $n$.
The case $n = k+1$ is Result \ref{ResLins}.
So assume that $n \geq k+2$ and that the theorem holds for the code $\mc_k(n-1,q)$.
Assume to the contrary that there exist code words of $\mc_k(n,q)$, with weight at most $W(k,q)$, which can't be written as a linear combination of at most two $k$-spaces.
Let $c$ be such a code word of smallest possible weight.
We will derive a contradiction by making use of the following observation.

\begin{obs}
\label{ObsContradiction}
There cannot exist a $k$-space $\kappa$ such that more than $\frac{1}{2}\theta_k$ points of $\kappa$ have the same non-zero value $\alpha$ in $c$.
\end{obs}

This follows from the fact that if such a $k$-space $\kappa$ would exist, then $\wt(c-\alpha \kappa) < \wt(c)$.
Since $c - \alpha \kappa \in \mc_k(n,q)$, this would mean that $c - \alpha \kappa$ is a linear combination of at most two $k$-spaces.
This is only possible if $c$ is a linear combination of three $k$-spaces.
But then $\wt(c) > W(k,q)$, by Lemma \ref{LmSmallestWtThreeKspaces}, a contradiction.\\

Given a hyperplane $\pi$ and a point $R \not \in \pi \cup \supp(c)$, there are three possibilities for $\pr R \pi c$:
\begin{itemize}
    \item [(P0)] $\pr R \pi c = \zero$.
    \item [(P1)] $\pr R \pi c = \alpha \kappa$, with $\alpha \in \fp^*$ and $\kappa$ a $k$-space of $\pi$.
    \item [(P2)] $\pr R \pi c = \alpha_1 \kappa_1 + \alpha_2 \kappa_2$, with $\alpha_i \in \fp^*$, and $\kappa_i$ distinct $k$-spaces of $\pi$.
\end{itemize}
This follows from the fact that $\wt( \pr R \pi c) \leq \wt(c) \leq W(k,q)$ (Lemma \ref{LmProj} (4)), hence due to the induction hypothesis, $\pr R \pi c$ is characterised as a linear combination of at most two $k$-spaces.\\

\underline{Case 1: Possibility (P2) never occurs.}

Take a point $P \in \supp(c)$, then there exists a tangent line $l$ to $\supp(c)$ through $P$.
Otherwise, each of the $\theta_{n-1}$ lines through $P$ contains another point of $\supp(c)$, implying that $\wt(c) > \theta_{n-1} > W(k,q)$, since $n \geq k + 2$, a contradiction.
Now take a point $R \in l \setminus \set P$ and a hyperplane $\pi$ with $\pi \cap l = \set P$.
Then $\pr R \pi c (P) = \sum_{Q \in PR} c(Q) = c(P)$.
Hence, $\pr R \pi c$ can't be $\zero$, which means (P1) is the only possibility.
So $\pr R \pi c = \alpha \kappa$ for some $\alpha \in \fp^*$, and some $k$-space $\kappa$.
It now follows that $\alpha = c(P)$ and $\pr R \pi c \cdot \one = \alpha$, so by Lemma \ref{LmProj} (5), $c(P) = c \cdot \one$.
Since this holds for all points of $\supp(c)$, they all have the same non-zero value $\alpha := c \cdot \one$ in $c$.
Note that this also means that $\pr R \pi c \cdot \one$ can never be zero, which means that possibility (P0) doesn't occur, for any choice of a hyperplane $\pi$ and a point $R \not \in \pi \cup \supp(c)$.

Taking an arbitrary hyperplane $\pi$ and a point $R \not \in \pi \cup \supp(c)$, we conclude that $\pr R \pi c = \alpha \kappa$, for some $k$-space $\kappa$ in $\pi$.
Define $\lambda := \vspan{R,\kappa}$.
For every point $P \in \kappa$, the line $PR$ intersects $\supp(c)$.
Therefore, the $(k+1)$-space $\lambda$ intersects $\supp(c)$ in at least $\theta_k$ points.

\bigskip
Remark that, if $q \geq 5$ and $q$ is even, Lemma \ref{LmAtLeastHalfPointsSameValue} can be used to obtain a contradiction to Observation \ref{ObsContradiction}.
As such, we can assume that $q$ is $2$, $4$ or odd.

Since $k \leq n - 2$, there exists a hyperplane $\pi'$ through $\lambda$.
Take a point $R' \not \in \pi' \cup \supp(c)$, then $\pr {R'} {\pi'} c = \alpha \kappa'$ for some $k$-space $\kappa'$ in $\pi'$.
We define the following numbers:
\begin{align*}
    x_1 = | \supp(c) \cap \pi' | \geq \theta_k, & &
    x_2 = | (\supp(c) \cap \pi') \setminus \kappa' |, & &
    x_3 = | \kappa' \setminus \supp(c) |.
\end{align*}

If $P \in (\supp(c) \cap \pi') \setminus \kappa'$, then
\[
 0 = \pr {R'} {\pi'} c (P) = \sum_{Q \in PR'} c(Q) \equiv \alpha \cdot |\supp(c) \cap PR'| \pmod p.
\]
Hence, $PR'$ contains $0 \pmod p$ points of $\supp(c)$, which means $PR'$ contains at least $p-1$ points of $\supp(c) \setminus \pi'$.
Remark that, if $q$ is odd and $q \neq 3$, then $p > 2$ and we can apply Lemma \ref{LmShortAndLongSecants} to state that $PR'$ contains at least $q-1$ points of $\supp(c) \setminus \pi'$.
If $P \in \kappa' \setminus \supp(c)$, then $PR'$ contains at least one point of $\supp(c) \setminus \pi'$. This yields
\begin{equation}
\label{EqCase1}
\begin{cases}
 (p-1) x_2 + x_3 \leq | \supp(c) \setminus \pi' | = \wt(c) - x_1 \leq 2 \theta_k - \theta_k = \theta_k & \text{ if } q \leq 4,\\
 (q-1) x_2 + x_3 \leq | \supp(c) \setminus \pi' | = \wt(c) - x_1 \leq W(k,q) - \theta_k & \text{ if } q > 4 \text{ is odd.}\\
\end{cases}
\end{equation}

Also note that $| \kappa' \cap \supp(c) | = x_1 - x_2$ and $x_3 = | \kappa' | - | \kappa' \cap \supp(c) | = \theta_k - x_1 + x_2$.
Hence the system of equations \eqref{EqCase1} becomes
\[
\begin{cases}
 (p-1) x_2 + \theta_k - x_1 + x_2 \leq \theta_k & \text{ if } q \leq 4,\\
 (q-1) x_2 + \theta_k - x_1 + x_2 \leq 3q^k-2q^{k-1}+\theta_{k-2}-1-\theta_k & \text{ if } q > 4 \text{ is odd,}\\
\end{cases}
\]
which implies
\[
x_2 \leq
\begin{cases}
 \frac{x_1}{p} & \text{ if } q \leq 4,\\
 \frac{x_1}{q} + q^{k-1} & \text{ if } q > 4 \text{ is odd,}\\
\end{cases}
\]
Thus, if $q \leq 4$, we get
\begin{equation}
\label{EqCase12}
 | \supp(c) \cap \kappa' | = x_1 - x_2 \geq \frac{p-1}{p} x_1 \geq \frac{p-1}{p} \theta_k.
\end{equation}
If $p = 2$, then $\theta_k$ is odd, hence $| \supp(c) \cap \kappa' | > \frac{1}{2} \theta_k$ since the left-hand side must be an integer.
Otherwise, $q = p = 3$ and $\frac{p-1}{p} = \frac{2}{3}$, which also implies $| \supp(c) \cap \kappa' | > \frac{1}{2} \theta_k$.
This yields a contradiction by Observation \ref{ObsContradiction}, since all points of $\supp(c)$ have the same value in $c$.

If $q > 4$ is odd, we get the following variant of equation \eqref{EqCase12}.
\[
 | \supp(c) \cap \kappa' | = x_1 - x_2 \geq \frac{q-1}{q} \theta_k - q^{k-1} > \frac{1}{2}\theta_k.
\]
The last inequality holds as $q > 4$.
This results yet again in a contradiction by Observation \ref{ObsContradiction}.\\

\underline{Case 2: Possibility (P2) does occur.}

Take a hyperplane $\pi$ and a point $R \not \in \pi \cup \supp(c)$ such that $\pr R \pi c = \alpha_1 \kappa_1 + \alpha_2 \kappa_2$ for some $\alpha_i \in \fp^*$ and distinct $k$-spaces $\kappa_i$ of $\pi$.
Define the following notation:
\begin{align*}
    \sigma := \kappa_1 \cap \kappa_2, &&
    s := \dim(\sigma), &&
    \tau := \langle R, \sigma \rangle, &&
    \lambda_i := \langle R, \kappa_i \rangle.
\end{align*}

Remark that, if $q \geq 5$, Lemma \ref{LmAtLeastHalfPointsSameValue} implies a contradiction to Observation \ref{ObsContradiction}.
As such, we can assume that $q \leq 4$, which implies that $W(k,q) = 2q^k$.

First, remark that $\supp(c) \subseteq \lambda_1 \cup \lambda_2$.
Indeed, as $\wt(c) \leq 2q^k$ and $s \leq k-1$, we know that $\lambda_1 \cup \lambda_2$ contains at least $2(\theta_k - \theta_{k-1}) = 2q^k$ points of $\supp(c)$.
This is only possible if $\wt(c) = 2q^k$ and thus $\supp(c) \subseteq \lambda_1 \cup \lambda_2$.
Note that this means that $\pr R \pi c = \alpha_1 (\kappa_1 - \kappa_2)$, and $s = k-1$.

Now take a point $Q \in \lambda_1 \setminus (\lambda_2 \cup \supp(c))$.
We can assume, w.l.o.g., that $Q \not \in \pi$ (else, by remark \ref{RmkProjHyperplane}, we choose another hyperplane $\pi$).
Then $Q$ projects every point of $\lambda_1$ to a point of $\kappa_1$, and for every point $P$ of $\lambda_2 \setminus \tau$, $Q P$ can't contain a point of $\supp(c)$ other than $P$.
Hence, the points of $(\lambda_2 \setminus \tau) \cap \supp(c)$ are projected by $Q$ onto points with non-zero value in $\pr Q \pi c$.
In particular, $\pr Q \pi c \neq \zero$.
By Lemma \ref{LmProj} (5), this implies that $\pr Q \pi c$ is a linear combination of precisely two $k$-spaces.
Even more, as $\wt(c) = 2q^k$, we know that $\pr Q \pi c$ is the difference of two distinct $k$-spaces through a $(k-1)$-space.

The fact that $\wt( \pr Q \pi c) = 2q^k$ is only possible if no line through $Q$ contains more than one point of $\supp(c)$.
In this way, we see that all points of $\kappa_1 \setminus \sigma$ must have value $\alpha_1$ in $\pr Q \pi c$.
Thus, $\pr Q \pi c = \alpha_1 (\kappa_1 - \rho)$ for some $k$-space $\rho$ in $\pi$.\footnote{
Beware that if $q=2$ and $c = \kappa_1 + \kappa_2$, with $\kappa_1$ and $\kappa_2$ $k$-spaces through a $(k-1)$-space, these spaces $\kappa_1$ and $\kappa_2$ are not uniquely determined by $c$.
This is because, if $K = \vspan{\kappa_1, \kappa_2}$, then $K \setminus \supp(c)$ is a $k$-space $\kappa_3$.
If $\kappa'_1$ and $\kappa'_2$ are distinct $k$-spaces in $K$, intersecting $\kappa_3$ in the same $(k-1)$-space, then also $c = \kappa'_1 + \kappa'_2$.}
This means that all points of $\supp(c) \cap (\lambda_2 \setminus \tau)$ have value $-\alpha_1$ and lie in the space $\mu := \lambda_2 \cap \langle Q, \rho \rangle$.
Note that $\dim(\mu) \leq k$ and $\mu$ contains $q^k > \frac{1}{2}\theta_k$ points of $\supp(c)$ with value $-\alpha_1$ in $c$.
Observation \ref{ObsContradiction} yields the desired contradiction.\\

If $q \in Q_3 \cup Q_4 \cup Q_5$, then the bound is tight because it is tight for $\mc_k(k+1,q)$ (see Result \ref{ResLins}) and we can interpret $\mc_k(k+1,q)$ as a subcode of $\mc_k(n,q)$ by restricting the generating set $G_k^{(0)}(n,q)$ of $\mc_k(n,q)$ to $G_k^{(0)}(\Pi)$ for some $(k+1)$-space $\Pi$ in $\pg(n,q)$.
This way we see that $\mc_{j,k}(n,q)$ must also contain code words of weight $W(k,q)+1$.
Note that $W(k,q)+1$ exceeds $2 \theta_k$, which is an upper bound on the weight of a linear combination of two $k$-spaces.
\end{proof}

\setcounter{obs}{0}

\begin{crl}
 \label{CrlMinWtHullPoints}
If $c$ is a code word of $\mh_{0,k}(n,q)$, with $\wt(c) \leq W(k,q)$, then $c$ is a scalar multiple of the difference of two $k$-spaces. In particular, the minimum weight of $\mh_{0,k}(n,q)$ is $2q^k$, and the minimum weight code words are scalar multiples of the difference of two $k$-spaces through a common $(k-1)$-subspace.
\end{crl}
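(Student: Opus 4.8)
The plan is to derive everything directly from Theorem \ref{ThmPointsKSpaces} combined with the hull description in Lemma \ref{LmHull}. First I would recall that, by Lemma \ref{LmHull} (2) with $j=0$, a vector $c$ lies in $\mh_{0,k}(n,q)$ precisely when $c \in \mc_k(n,q)$ and $c \cdot \one = 0$. Since $\wt(c) \leq W(k,q)$, Theorem \ref{ThmPointsKSpaces} guarantees that $c$ is a linear combination of at most two $k$-spaces, so $c$ is one of the following: the zero word $\zero$, a scalar multiple $\alpha\kappa$ of a single $k$-space with $\alpha \in \fp^*$, or $\alpha_1\kappa_1 + \alpha_2\kappa_2$ with $\alpha_1,\alpha_2 \in \fp^*$ and distinct $k$-spaces $\kappa_1,\kappa_2$.

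Next I would impose the hull condition $c \cdot \one = 0$ to eliminate all but the desired shape. The proof of Lemma \ref{LmHull} (1) records that $\kappa \cdot \one = 1$ for every $k$-space $\kappa$, as $\theta_k \equiv 1 \pmod p$. Consequently the single-$k$-space case yields $c \cdot \one = \alpha \neq 0$ and is excluded, while in the two-$k$-space case $c \cdot \one = \alpha_1 + \alpha_2 = 0$ forces $\alpha_2 = -\alpha_1$. This gives $c = \alpha_1(\kappa_1 - \kappa_2)$, a scalar multiple of the difference of two $k$-spaces, which is the first assertion (the zero word being the trivial scalar multiple).

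For the minimum weight I would compute $\wt(\kappa_1-\kappa_2)$ directly. A point receives a nonzero value in $\kappa_1-\kappa_2$ exactly when it lies in precisely one of $\kappa_1,\kappa_2$, so the weight equals $|\kappa_1 \triangle \kappa_2| = 2\big(\theta_k - |\kappa_1 \cap \kappa_2|\big)$. This is minimised by maximising $\dim(\kappa_1\cap\kappa_2)$, which for distinct $k$-spaces is at most $k-1$; when the intersection is a $(k-1)$-space it equals $2(\theta_k - \theta_{k-1}) = 2q^k$, and it is strictly larger otherwise. Such a difference indeed lies in $\mh_{0,k}(n,q)$ by Lemma \ref{LmHull}, and its weight $2q^k$ satisfies $2q^k \leq W(k,q)$ in every case of Definition \ref{DefW}, so these words are covered by the first part.

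There is no genuine obstacle here, since the heavy lifting is already done in Theorem \ref{ThmPointsKSpaces}; the only point requiring a moment's care is the minimality claim. Any hypothetical nonzero hull word of weight below $2q^k$ would automatically satisfy $\wt \leq W(k,q)$, hence would already be classified as a scalar multiple of a difference of two distinct $k$-spaces and thus have weight at least $2q^k$ by the computation above. This rules out lighter words, so $2q^k$ is a true minimum rather than merely a lower bound on the classified words, and equality holds exactly for scalar multiples of differences of two $k$-spaces meeting in a common $(k-1)$-space.
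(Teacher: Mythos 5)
Your proposal is correct and follows essentially the same route as the paper, which simply defers to Step 3 of the proof of Theorem \ref{ThmLargeQMainThm}: apply the characterisation of Theorem \ref{ThmPointsKSpaces}, impose $c \cdot \one = 0$ via Lemma \ref{LmHull} to force the coefficients to be opposite, and minimise $\wt(c) = 2(\theta_k - \theta_s)$ over $s = \dim(\kappa_1 \cap \kappa_2)$. Your explicit check that $2q^k \leq W(k,q)$ in every case of Definition \ref{DefW} (so that no lighter hull word could escape the classification) is a worthwhile detail the paper leaves implicit.
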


\begin{proof}
The arguments are the same as in Step 3 of the proof of Theorem \ref{ThmLargeQMainThm}.
\end{proof}

\begin{rmk}\label{RmkWeightSpec}
It is not difficult to write down the weight spectrum of $\mc_k(n,q)$ explicitly for weights up to $W(k,q)$.
For all $q$, the minimum weight code words have weight $\theta_k$ and are the scalar multiples of $k$-spaces.
The next weight is $2q^k$ and is attained only by the scalar multiples of the difference of two $k$-spaces intersecting in a $(k-1)$-space.
In general, if $\alpha_1, \alpha_2 \in \fp^*$ and $\kappa_1, \kappa_2 \in G_k$ with $\kappa_1 \neq \kappa_2$, then $\wt(\alpha_1 \kappa_1 + \alpha_2 \kappa_2) = 2\theta_k - (1 + \eps) \theta_{\dim(\kappa_1 \cap \kappa_2)}$, with $\eps = 1$ if $\alpha_1 = - \alpha_2$, and $\eps = 0$ otherwise.

In particular, we know that $[2\theta_k - \theta_{2k-n} + 1, W(k,q)]$ is a gap in the weight spectrum.
This interval in non-empty if $q \notin Q_1$ and if either $q \notin Q_2$ or $2k \geq n$.
\end{rmk}

\section{Codes of $\boldsymbol{j}$- and $\boldsymbol{k}$-spaces}
\label{SectJKSpaces}
The main goal of this section is generalising Theorem \ref{ThmPointsKSpaces} to all codes $\mc_{j,k}(n,q)$.
The following map, which is essentially due to Bagchi \& Inamdar \cite{bagchi}, will prove to be very helpful.\footnote{In this section, we will denote two distinct projections with Devanagari symbols.
These can be imported in \LaTeX\, using the package \texttt{devanagari}.
In Definition \ref{DefPa}, we introduce the symbol $\pa$ (pronounced `pa' with corresponding command \verb!{\dn p}!), while, in Definition \ref{DefLa}, we use the symbol $\la$ (pronounced `la' with corresponding command \verb!{\dn l}!).}

\begin{df}
\label{DefPa}
Looking at $V(j,n,q)$, the elements of $G_j^{(j)}$ form the standard basis.
Given an $i$-space $\iota$ of $\pg(n,q)$, with $-1 \leq i < j$, we take an $(n-i-1)$-space $\pi$ of $\pg(n,q)$, skew to $\iota$.
Consider the unique linear map $\pa_\iota: V(j,n,q) \rightarrow V(j-i-1,\pi)$ satisfying, for all $\lambda \in G_j^{(j)}$,
\[
 \pa_\iota(\lambda) = \begin{cases}
 \lambda \cap \pi & \text{if } \iota \subset \lambda,\\
 \zero & \text{otherwise}.
 \end{cases}
\]
This means that, given $v \in V(j,n,q)$ and a $(j-i-1)$-space $\mu \subset \pi$, we have $\pa_\iota(v)(\mu) = v(\vspan{\mu,\iota})$.
\end{df}

Note that $\pa_\iota$ is closely related to taking the quotient of $\pg(n,q)$ through the space $\iota$.
The choice of $\pi$ doesn't make a (qualitative) difference for the definition of $\pa_\iota$.

\begin{lm}[{\cite[Theorem 1]{bagchi}}]\label{LmJSpacesThroughPoint}
Assume that $c \in \mc_{j,k}(n,q)$, with $j \geq 1$, and let $\iota$ be an $i$-space of $\pg(n,q)$, with $-1 \leq i < j$.
Then $\pa_\iota(c) \in \mc_{j-i-1,k-i-1}(n-i-1,q)$.
\end{lm}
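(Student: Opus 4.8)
The plan is to exploit the linearity of $\pa_\iota$ (built into Definition \ref{DefPa}) together with the fact that $\mc_{j,k}(n,q)$ is generated by the functions $\kappa^{(j)}$ with $\kappa \in G_k(n,q)$. Since $\pa_\iota$ is linear and $c$ is a linear combination of such generators, it suffices to establish the claim for a single $k$-space $\kappa$: I will show that $\pa_\iota(\kappa^{(j)})$ is either $\zero$ or equals the generator $(\kappa \cap \pi)^{(j-i-1)}$ of $\mc_{j-i-1,k-i-1}(n-i-1,q)$, where $\pi \cong \pg(n-i-1,q)$ is the fixed $(n-i-1)$-space skew to $\iota$ used to define the map.

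First I would fix a $(j-i-1)$-space $\mu \subset \pi$ and evaluate via the defining identity $\pa_\iota(\kappa^{(j)})(\mu) = \kappa^{(j)}(\vspan{\mu,\iota})$. Here $\vspan{\mu,\iota}$ is a $j$-space: because $\mu \subset \pi$ and $\pi$ is skew to $\iota$, the subspaces $\mu$ and $\iota$ are skew, so $\dim\vspan{\mu,\iota} = (j-i-1) + i + 1 = j$. Consequently $\pa_\iota(\kappa^{(j)})(\mu) = 1$ precisely when $\vspan{\mu,\iota} \subseteq \kappa$, that is, when both $\iota \subseteq \kappa$ and $\mu \subseteq \kappa$.

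This naturally splits into two cases. If $\iota \not\subseteq \kappa$, then $\vspan{\mu,\iota} \not\subseteq \kappa$ for every $\mu$, so $\pa_\iota(\kappa^{(j)}) = \zero \in \mc_{j-i-1,k-i-1}(n-i-1,q)$. If $\iota \subseteq \kappa$, then the condition $\vspan{\mu,\iota} \subseteq \kappa$ reduces to $\mu \subseteq \kappa$, equivalently $\mu \subseteq \kappa \cap \pi$ (since $\mu \subset \pi$ automatically). A Grassmann computation pins down the dimension of $\kappa \cap \pi$: because $\iota \subseteq \kappa$ and $\pi$ is skew to $\iota$, we have $\vspan{\kappa,\pi} \supseteq \vspan{\iota,\pi} = \pg(n,q)$, whence $\dim(\kappa \cap \pi) = k + (n-i-1) - n = k-i-1$. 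Thus $\kappa \cap \pi$ is a $(k-i-1)$-space of $\pi$ and $\pa_\iota(\kappa^{(j)})$ is exactly its incidence function $(\kappa \cap \pi)^{(j-i-1)}$, a generator of $\mc_{j-i-1,k-i-1}(n-i-1,q)$.

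Combining the two cases, every generator is sent into $\mc_{j-i-1,k-i-1}(n-i-1,q)$, and linearity concludes the argument. The only delicate points are the dimension bookkeeping, namely verifying that $\vspan{\mu,\iota}$ is genuinely a $j$-space and that $\kappa \cap \pi$ has dimension exactly $k-i-1$; both follow from the skewness of $\iota$ and $\pi$ through Grassmann's identity. I expect this to be the main (though modest) obstacle, since one should also confirm the index constraints $0 \leq j-i-1 < k-i-1 < n-i-1$, which hold because $-1 \leq i < j < k < n$, so that the target code is well-defined.
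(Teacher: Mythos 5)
Your proposal is correct and follows essentially the same route as the paper: the paper's proof also computes the image of a generator $\kappa^{(j)}$ under $\pa_\iota$, obtaining $(\kappa\cap\pi)^{(j-i-1)}$ if $\iota\subset\kappa$ and $\zero$ otherwise, and then invokes linearity. You merely spell out the dimension bookkeeping that the paper dismisses as ``easy to see''.
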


\begin{proof}
Take a $\kappa \in G_k^{(j)}$.
It is easy to see that
\[
 \pa_\iota(\kappa) = \begin{cases}
  \kappa \cap \pi & \text{if } \iota \subset \kappa,\\
  \zero & \text{otherwise},
 \end{cases}
\]
which implies that the image of $G_k(n,q)^{(j)}$ under $\pa_\iota$ is $G_{k-i-1}(\pi)^{(j)} \cup \set{\zero}$.
These sets generate $\mc_{j,k}(n,q)$ and $\mc_{j-i-1,k-i-1}(n-i-1,q)$, respectively.
Hence, it follows that $\pa_\iota \big( \mc_{j,k}(n,q) \big) = \mc_{j-i-1,k-i-1}(n-i-1,q)$.
\end{proof}

Another map that will serve as a useful tool is the following.

\begin{df}\label{DefLa}
Take an integer $i$, with $0 \leq i < j$.
Define the map:
\[
 \la_i: V(j,n,q) \rightarrow V(i,n,q):
 v \mapsto \bigg(\la_i(v): \iota \mapsto \sum_{\iota \subset \lambda \in G_j} v(\lambda)\bigg).
\]
This means that the value of $\la_i(v)$ at an $i$-space $\iota$ is the sum of the values in $v$ of all $j$-spaces $\lambda$ through $\iota$.
We will denote $\la_0$ by $\la$.
\end{df}

\begin{lm}
 \label{LmLaLin}
The map $\la_i$ is linear and $\la_i\big(\mc_{j,k}(n,q)\big) = \mc_{i,k}(n,q)$.
\end{lm}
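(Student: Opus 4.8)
The plan is to imitate the strategy already used for Lemma \ref{LmJSpacesThroughPoint}: since $\mc_{j,k}(n,q)$ is generated by $G_k(n,q)^{(j)}$ and $\la_i$ will be shown to be linear, it suffices to evaluate $\la_i$ on a single $k$-space generator and check that the image is exactly the corresponding generator of $\mc_{i,k}(n,q)$. Linearity itself is immediate from the definition: for $v,w \in V(j,n,q)$, scalars $\alpha,\beta \in \fp$, and any $i$-space $\iota$, the defining sum splits by finite additivity as
\[
 \la_i(\alpha v + \beta w)(\iota) = \sum_{\iota \subset \lambda \in G_j} (\alpha v + \beta w)(\lambda) = \alpha \sum_{\iota \subset \lambda \in G_j} v(\lambda) + \beta \sum_{\iota \subset \lambda \in G_j} w(\lambda),
\]
which is $\alpha \la_i(v)(\iota) + \beta \la_i(w)(\iota)$, so $\la_i(\alpha v + \beta w) = \alpha \la_i(v) + \beta \la_i(w)$.

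The core computation is to evaluate $\la_i(\kappa^{(j)})$ at an $i$-space $\iota$, where $\kappa$ is a fixed $k$-space. Since $\kappa^{(j)}(\lambda) = 1$ precisely when $\lambda \subseteq \kappa$, the value $\la_i(\kappa^{(j)})(\iota)$ counts the $j$-spaces $\lambda$ with $\iota \subseteq \lambda \subseteq \kappa$. If $\iota \not\subseteq \kappa$ this count is $0$; if $\iota \subseteq \kappa$, then passing to the quotient geometry $\kappa/\iota$, which is a projective space of dimension $k-i-1$, these $\lambda$ correspond bijectively to the $(j-i-1)$-spaces therein, so the count equals $\gauss{k-i}{j-i}$. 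I would then reduce this Gaussian coefficient modulo $p$ exactly as in Lemma \ref{LmHull} (1): because $q \equiv 0 \pmod p$, every factor $q^s - 1 \equiv -1 \pmod p$, so the numerator and denominator of $\gauss{k-i}{j-i}$ are each $\equiv (-1)^{j-i} \pmod p$ and coprime to $p$, forcing $\gauss{k-i}{j-i} \equiv 1 \pmod p$. Hence $\la_i(\kappa^{(j)})(\iota) \equiv 1$ if $\iota \subseteq \kappa$ and $0$ otherwise, i.e. $\la_i(\kappa^{(j)}) = \kappa^{(i)}$.

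Finally, I would combine the two ingredients: $\la_i$ maps the generating set $G_k(n,q)^{(j)}$ of $\mc_{j,k}(n,q)$ onto the generating set $G_k(n,q)^{(i)}$ of $\mc_{i,k}(n,q)$, so by linearity $\la_i\big(\mc_{j,k}(n,q)\big) = \mc_{i,k}(n,q)$. I do not anticipate a genuine obstacle here; the only delicate point is the mod-$p$ evaluation of $\gauss{k-i}{j-i}$, which must be handled through the observation $q \equiv 0 \pmod p$ rather than by naively cancelling the integer fraction modulo $p$.
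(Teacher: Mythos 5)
Your proposal is correct and follows essentially the same route as the paper: verify linearity pointwise from the defining sum, compute $\la_i(\kappa^{(j)})(\iota)$ as the count $\gauss{k-i}{j-i}\equiv 1 \pmod p$ of $j$-spaces between $\iota$ and $\kappa$, and conclude that generators map onto generators. The only difference is that you spell out the mod-$p$ evaluation of the Gaussian coefficient, which the paper (correctly) takes as known.
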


\begin{proof}
Take $\alpha, \beta \in \fp$ and $v,w \in V(j,n,q)$.
Let $\iota$ be an $i$-space of $\pg(n,q)$.
Then
\begin{align*}
 \la_i(\alpha v + \beta w)(\iota)
 &= \sum_{\iota \subset \lambda \in G_j} (\alpha v + \beta w)(\lambda)
 = \sum_{\iota \subset \lambda \in G_j} (\alpha v(\lambda) + \beta w(\lambda))\\
 &= \alpha \sum_{\iota \subset \lambda \in G_j}  v(\lambda) + \beta \sum_{\iota \subset \lambda \in G_j} w(\lambda))
 = \alpha\, \la_i(v)(\iota) + \beta\, \la_i(w)(\iota).
\end{align*}
Since this holds for every $i$-space $\iota$, $\la_i(\alpha v + \beta w) = \alpha\, \la_i(v) + \beta\, \la_i(w)$.

Now take a $k$-space $\kappa$ and an $i$-space $\iota$.
\[
 \la_i(\kappa^{(j)})(\iota) 
 = \sum_{\iota \subset \lambda \in G_j} \kappa^{(j)}(\lambda)
 = \sum_{\substack{\lambda \in G_j \\ \iota \subset \lambda \subset \kappa}} 1
 = \begin{cases}
  \gauss {k-i}{j-i} \equiv 1 \pmod p & \text{if } \iota \subset \kappa,\\
  0 & \text{otherwise,}
 \end{cases}
 = \kappa^{(i)}(\iota).
\]
This means that $\la_i(\kappa^{(j)}) = \kappa^{(i)}$.
Hence, the generators of $\mc_{j,k}(n,q)$ are mapped to the generators of $\mc_{i,k}(n,q)$.
Since $\la_i$ is linear, this proves that $\la_i \big( \mc_{j,k}(n,q) \big) = \mc_{i,k}(n,q)$.
\end{proof}

\begin{lm}
 \label{LmCompLa}
Assume that $v \in V(j,n,q)$ and $0 \leq i < j$.
Then $\la\big(\la_i(v)\big) = \la(v)$.
\end{lm}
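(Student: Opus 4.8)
The plan is to prove the identity pointwise. Both $\la\big(\la_i(v)\big)$ and $\la(v)$ are elements of $V(0,n,q)$, so it suffices to show they take the same value at every point $P$ of $\pg(n,q)$. First I would expand the left-hand side directly from Definition \ref{DefLa}, unfolding the inner map $\la_i$ and then the outer map $\la=\la_0$:
\[
 \la\big(\la_i(v)\big)(P)
 = \sum_{P \subset \iota \in G_i} \la_i(v)(\iota)
 = \sum_{P \subset \iota \in G_i} \; \sum_{\iota \subset \lambda \in G_j} v(\lambda).
\]
This is a double sum ranging over all flags $P \subseteq \iota \subseteq \lambda$ with $\dim(\iota)=i$ and $\dim(\lambda)=j$.

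The next step is a Fubini-style interchange: I would regroup the terms according to the outermost space $\lambda$. Every $j$-space $\lambda$ that occurs contains an $i$-space through $P$, hence satisfies $P \subset \lambda$, and conversely each $j$-space through $P$ does occur. The coefficient with which $v(\lambda)$ appears is then exactly the number of $i$-spaces $\iota$ with $P \subseteq \iota \subseteq \lambda$. Counting inside $\lambda \cong \pg(j,q)$, the $i$-spaces through the point $P$ correspond to the $(i-1)$-spaces of the quotient $\lambda/P \cong \pg(j-1,q)$, and there are $\gauss{j}{i}$ of them, a number independent of both $P$ and $\lambda$. Thus
\[
 \la\big(\la_i(v)\big)(P)
 = \gauss{j}{i} \sum_{P \subset \lambda \in G_j} v(\lambda).
\]

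The only computational point to record is that $\gauss{j}{i} \equiv 1 \pmod p$, which is the same reduction already used silently in the proof of Lemma \ref{LmLaLin} for $\gauss{k-i}{j-i}$: the Gaussian coefficient is a polynomial in $q$ with constant term $1$, while $q=p^h \equiv 0 \pmod p$, so every higher-degree term vanishes modulo $p$. Substituting this back gives
\[
 \la\big(\la_i(v)\big)(P)
 \equiv \sum_{P \subset \lambda \in G_j} v(\lambda)
 = \la(v)(P) \pmod p,
\]
and since $P$ was arbitrary the two functions coincide, proving the lemma. I do not expect any genuine obstacle here; the argument is a single summation swap followed by the standard mod-$p$ evaluation of a Gaussian coefficient. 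The only case meriting a remark is $i=0$, where the outer $\la$ merely sums over the $0$-spaces through $P$, i.e.\ over $P$ itself ($\gauss{j}{0}=1$), so the statement degenerates to the tautology $\la(v)=\la(v)$ and is covered by the same formula.
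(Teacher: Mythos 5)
Your proof is correct and is essentially identical to the paper's: both evaluate pointwise at $P$, interchange the double sum over flags $P \subseteq \iota \subseteq \lambda$, identify the multiplicity of each $v(\lambda)$ as $\gauss{j}{i}$, and reduce this Gaussian coefficient to $1$ modulo $p$. No gaps.
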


\begin{proof}
Take an arbitrary point $P$ in $\pg(n,q)$.
We need to prove that $\la\big(\la_i(v)\big)(P) = \la(v)(P)$.
\begin{align*}
\la\big(\la_i(v)\big)(P) 
& = \sum_{P \in \iota \in G_i} \la_i(v)(\iota)
= \sum_{P \in \iota \in G_i} \sum_{\iota \subset \lambda \in G_j} v(\lambda)
= \sum_{P \in \lambda \in G_j} v(\lambda) \Big( \sum_{\substack{\iota \in G_i \\ P \in \iota \subset \lambda}} 1 \Big)\\
& = \sum_{P \in \lambda \in G_j} v(\lambda) \gauss{j}{i}
\equiv \sum_{P \in \lambda \in G_j} v(\lambda)
= \la(v)(P) \pmod p . \qedhere
\end{align*}
\end{proof}

The following lemma shows the interaction between $\pa$ and $\la$.

\begin{lm}
 \label{LmCoeffLa}
Assume that $c \in \mc_{j,k}(n,q)$, and let $\iota$ be an $i$-space, with $0 \leq i < j$.
Then $\la_i(c)(\iota) = \pa_\iota(c) \cdot \one$.
Hence, $\la_i(c)(\iota) = 0$ if and only if $\pa_\iota(c) \in \mh_{j-i-1,k-i-1}(n-i-1,q)$.
\end{lm}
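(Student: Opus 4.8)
The plan is to unwind both sides from their definitions and show they are literally the same sum. Fixing an $(n-i-1)$-space $\pi$ skew to $\iota$ (as in Definition \ref{DefPa}), I would compute the scalar product directly:
\[
 \pa_\iota(c)\cdot\one=\sum_{\mu\in G_{j-i-1}(\pi)}\pa_\iota(c)(\mu)=\sum_{\mu\in G_{j-i-1}(\pi)}c\big(\vspan{\mu,\iota}\big).
\]
The target $\la_i(c)(\iota)=\sum_{\iota\subset\lambda\in G_j}c(\lambda)$ is a sum over the $j$-spaces through $\iota$, so the first claim reduces to producing a bijection between $G_{j-i-1}(\pi)$ and the set of $j$-spaces containing $\iota$, under which $\mu$ corresponds to $\vspan{\mu,\iota}$; then reindexing the sum finishes the first statement.

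The crux is therefore verifying that $\mu\mapsto\vspan{\mu,\iota}$ is a bijection, with inverse $\lambda\mapsto\lambda\cap\pi$ — this is exactly the passage to the quotient geometry of $\pg(n,q)$ through $\iota$. I would argue it with Grassmann's identity: since $\iota$ and $\pi$ are skew, $\dim\vspan{\iota,\pi}=i+(n-i-1)+1=n$, so $\vspan{\iota,\pi}=\pg(n,q)$. For $\mu\in G_{j-i-1}(\pi)$, skewness of $\mu$ and $\iota$ gives $\dim\vspan{\mu,\iota}=j$, and this $j$-space visibly contains $\iota$. Conversely, for a $j$-space $\lambda\supset\iota$ one has $\vspan{\lambda,\pi}=\pg(n,q)$, whence $\dim(\lambda\cap\pi)=j+(n-i-1)-n=j-i-1$, and a dimension count forces $\vspan{\lambda\cap\pi,\iota}=\lambda$. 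Thus the two maps are mutually inverse and $\pa_\iota(c)\cdot\one=\la_i(c)(\iota)$.

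For the equivalence, I would first note that $0\le i<j$ forces $j\ge1$, so Lemma \ref{LmJSpacesThroughPoint} applies and gives $\pa_\iota(c)\in\mc_{j-i-1,k-i-1}(n-i-1,q)$. The shifted parameters $j'=j-i-1$, $k'=k-i-1$, $n'=n-i-1$ satisfy $0\le j'<k'<n'$ (all inherited from $i<j<k<n$), so Lemma \ref{LmHull} (2) applies verbatim in the quotient space and identifies the hull there as $\{c'\in\mc_{j-i-1,k-i-1}(n-i-1,q):c'\cdot\one=0\}$. Combining this with the first claim yields the chain $\la_i(c)(\iota)=0\iff\pa_\iota(c)\cdot\one=0\iff\pa_\iota(c)\in\mh_{j-i-1,k-i-1}(n-i-1,q)$.

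In truth this lemma is mostly bookkeeping: the only genuinely delicate step is the bijection in the second paragraph, and even there the dimension counts are routine once skewness of $\iota$ and $\pi$ is exploited. The two points I would be careful about are checking $j\ge1$ before invoking Lemma \ref{LmJSpacesThroughPoint} and checking the inequalities $0\le j'<k'<n'$ before invoking Lemma \ref{LmHull} (2) in the quotient; both hold automatically from $0\le i<j<k<n$. The independence of the construction from the choice of $\pi$ is already guaranteed by Definition \ref{DefPa}, so no genericity concern arises.
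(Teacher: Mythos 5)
Your proposal is correct and follows the same route as the paper: the paper's proof simply observes that both $\la_i(c)(\iota)$ and $\pa_\iota(c)\cdot\one$ equal the sum of the values of $c$ over the $j$-spaces through $\iota$, and then invokes Lemma \ref{LmJSpacesThroughPoint} together with Lemma \ref{LmHull} (2) exactly as you do. Your write-up merely makes explicit the quotient-geometry bijection $\mu\mapsto\vspan{\mu,\iota}$ that the paper leaves as ``easy to see''.
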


\begin{proof}
It is easy to see that both $\la_i(c)(\iota)$ and $\pa_\iota(c) \cdot \one$ equal the sum of the values in $c$ of all $j$-spaces through $\iota$.
We know that $\pa_\iota(c) \in \mc_{j-i-1,k-i-1}(n-i-1,q)$.
By Lemma \ref{LmHull} (2), this means that $\pa_\iota(c) \in \mh_{j-i-1,k-i-1}(n-i-1,q)$ if and only if $\pa_\iota(c) \cdot \one = 0$.
\end{proof}

We can now characterise all code words of $\mc_{j,k}(n,q)$ up to weight $W(j,k,q)$.
If $q$ is large enough, then this bound exceeds $2\gauss{k+1}{j+1}$, which is at least the maximum weight of a linear combination of two $k$-spaces (with equality if and only if $n > 2k - j$).

\begin{thm}
 \label{ThmLargeQMainThm}
Assume that $q \notin Q_1$.
\begin{enumerate}
 \item[(1)] If $c$ is a code word of $\mc_{j,k}(n,q)$, with $\wt(c) \leq W(j,k,q)$, then $c$ is a linear combination of at most two $k$-spaces.
 \item[(2)] If $c$ is a code word of $\mh_{j,k}(n,q)$, with $\wt(c) \leq W(j,k,q)$, then $c$ is a scalar multiple of the difference of two $k$-spaces.
 In particular, the minimum weight of $\mh_{j,k}(n,q)$ is $2q^{k-j} \gauss{k}{j}$, and the minimum weight code words are scalar multiples of the difference of two $k$-spaces through a common $(k-1)$-space.
\end{enumerate}
\end{thm}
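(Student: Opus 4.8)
The plan is to reduce both statements to the point-case Theorem \ref{ThmPointsKSpaces} by using the two projection maps $\pa_\iota$ and $\la_i$ developed above, and then to perform an induction on $j$. For the base case $j=0$, statement (1) is exactly Theorem \ref{ThmPointsKSpaces} and statement (2) is Corollary \ref{CrlMinWtHullPoints}. So assume $j \geq 1$ and that both statements hold for all codes $\mc_{j',k'}(n',q)$ with $j' < j$. Let $c \in \mc_{j,k}(n,q)$ with $\wt(c) \leq W(j,k,q)$. The strategy is to probe $c$ through points $P$ (more precisely, through the maps $\pa_P$ for points $P \in G_0$) and apply the induction hypothesis to the projected code words $\pa_P(c) \in \mc_{j-1,k-1}(n-1,q)$, which live one dimension down in both parameters.

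The first key step is a weight-transfer estimate. By a double-counting argument over incident pairs $(\iota,\lambda)$ with $\iota \in G_i$, $\lambda \in \supp(c)$, $\iota \subset \lambda$ (the content of the commented-out Lemma \ref{LmWtIneq}), one gets $\wt(\pa_P(c)) \leq \wt(c)$ for a suitable fraction of points $P$, and more usefully one controls $\wt(\la(c))$ and $\bigl|\supp_0(c)\bigr|$ against $\wt(c)$. The crucial point is that $W(j,k,q)$ has been calibrated so that for most choices of $P$ the projected weight $\wt(\pa_P(c))$ falls below $W(j-1,k-1,q)$, allowing the induction hypothesis to fire. I would then argue that for such $P$, the projection $\pa_P(c)$ is a linear combination of at most two $(k-1)$-spaces of the quotient. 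Translating this back upstairs via the definition of $\pa_P$ forces the $j$-spaces of $\supp(c)$ through $P$ to be confined to at most two $k$-spaces through $P$.

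The second key step is to glue these local conclusions into a single global pair of $k$-spaces. Here Lemma \ref{LmCoeffLa} is the linchpin: it identifies $\la_i(c)(\iota)$ with $\pa_\iota(c)\cdot\one$, so that $\la(c)$ records exactly which points lie on a ``difference-type'' versus a ``sum-type'' local configuration, and the already-proven point-code results (Theorem \ref{ThmPointsKSpaces} applied to $\la(c) \in \mc_k(n,q)$ via Lemma \ref{LmLaLin}) pin down $\la(c)$ as a linear combination of at most two $k$-spaces. Matching the upstairs support $\supp(c)$ against the two $k$-spaces identified by $\la(c)$, and using that each local projection only saw those same two spaces, one concludes that $c$ itself is supported on (the $j$-spaces inside) at most two $k$-spaces, with constant coefficients; comparing scalar products against hyperplanes through Lemma \ref{LmHull} (1) fixes the coefficients and yields (1). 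Statement (2) follows by the same bookkeeping restricted to the hull: Lemma \ref{LmHull} (2) gives $c \cdot \one = 0$, which forces the two coefficients to be negatives of one another, so $c$ is a scalar multiple of a difference of two $k$-spaces; the minimum-weight count $2q^{k-j}\gauss{k}{j}$ then comes from the weight formula $\wt(\kappa_1 - \kappa_2) = 2\gauss{k+1}{j+1} - 2\gauss{\dim(\kappa_1\cap\kappa_2)+1}{j+1}$, minimised when $\kappa_1 \cap \kappa_2$ is a $(k-1)$-space.

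I expect the main obstacle to be the gluing step: the induction hypothesis only gives, for each individual point $P$, a pair of $(k-1)$-spaces, and these pairs are a priori unrelated as $P$ varies over $\supp_0(c)$. The delicate work is to show that a single ambient pair $\kappa_1, \kappa_2 \in G_k(n,q)$ accounts for all of them simultaneously — this is where the slack in the bound $W(j,k,q)$ (the $3 - 7/q$ versus $3 - 6/q$ distinction between $Q_3\cup Q_4$ and $Q_5$) must be spent, ruling out the possibility that the local pairs genuinely differ, which would force a third $k$-space and hence too large a weight by the analogue of Lemma \ref{LmSmallestWtThreeKspaces}. Ensuring the projected weights stay below $W(j-1,k-1,q)$ for enough points, and handling the borderline case $n = 2k-j$ where the maximum weight of a two-space combination coincides with the ambient bound, are the places where the constants genuinely matter.
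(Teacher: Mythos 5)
Your scaffolding matches the paper's: induction on $j$ with base case Theorem \ref{ThmPointsKSpaces} and Corollary \ref{CrlMinWtHullPoints}, the maps $\pa$ and $\la$, and the derivation of (2) from (1) via $c \cdot \one = 0$ and the weight formula $2\gauss{k+1}{j+1} - 2\gauss{s+1}{j+1}$. But the central step has a genuine gap, and you have in fact pointed at it yourself. Your plan is to apply the induction hypothesis to each local projection $\pa_P(c) \in \mc_{j-1,k-1}(n-1,q)$ and then glue the resulting per-point pairs of $(k-1)$-spaces into one ambient pair; you correctly flag this gluing as ``the main obstacle'' but do not resolve it, and the appeal to Lemma \ref{LmHull} (1) at the end cannot fix the individual coefficients (it only controls their sum). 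The paper sidesteps gluing entirely: it applies $\la_{j-1}$, obtaining a \emph{single global} code word $\la_{j-1}(c) \in \mc_{j-1,k}(n,q)$ in the same ambient space with the same $k$, and shows $\wt(\la_{j-1}(c)) \leq \frac{\theta_j}{\theta_{k-j}} \wt(c) \leq W(j-1,k,q)$ by a double count (this is where the multiplicative calibration $\frac{\theta_j}{\theta_{k-j}}\gauss{k+1}{j+1} = \gauss{k+1}{j}$ of the bound is spent). The induction hypothesis then hands you one global pair $\kappa_1, \kappa_2$ with no gluing needed.

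The second missing ingredient is what lets you pass from $\la_{j-1}(c) = \alpha\kappa_1^{(j-1)} + \beta\kappa_2^{(j-1)}$ to $c = \alpha\kappa_1^{(j)} + \beta\kappa_2^{(j)}$. Your proposal treats this as bookkeeping, but it is the technical heart: one must first show that $\ker(\la_{j-1}) \cap \mc_{j,k}(n,q)$ contains no nonzero code word of weight at most $2\gauss{k+1}{j+1}$. The paper's Step 1 proves a lower bound of roughly $3.3\gauss{k+1}{j+1}$ on such kernel elements by double counting point--$j$-space flags, using Corollary \ref{CrlMinWtHullPoints} for $\pa_\iota(c)$ with $\iota \in \supp_{j-1}(c)$ and, crucially, the hull statement (2) of the induction hypothesis for $\pa_P(c)$ with $P \in \supp_0(c)$ --- so statement (2) at level $j-1$ is genuinely needed as input, not merely derived at the end. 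With that kernel bound in hand, either $\supp(c) \subseteq G_j(\kappa_1) \cup G_j(\kappa_2)$ and $c - \alpha\kappa_1 - \beta\kappa_2$ would be a too-light kernel element, or some $\lambda \in \supp(c)$ escapes $\kappa_1 \cup \kappa_2$ and a counting argument through a $(j-1)$-space $\iota \subset \lambda$ with $\la_{j-1}(c)(\iota) = 0$ forces $\wt(c) > W(j,k,q)$. Without this kernel bound, your argument cannot exclude that $c$ differs from $\alpha\kappa_1^{(j)} + \beta\kappa_2^{(j)}$ by a light element of $\ker(\la_{j-1})$, so the proof as proposed does not close. (The borderline case $n = 2k - j$ you worry about plays no role in the actual argument.)
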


\begin{proof}
We will prove this by induction on $j$.
If $j = 0$, this follows from Theorem \ref{ThmPointsKSpaces} and Corollary \ref{CrlMinWtHullPoints}, as $W(0,k,q) \leq W(k,q)$.
So assume that $j \geq 1$ and that the theorem holds for all codes $\mc_{j',k'}(n',q)$, with $j' < j$, and $j' < k' < n'$.\\

\underline{Step 1: Attain a lower bound on the minimum weight of $\ker(\la_{j-1}) \cap \mc_{j,k}(n,q)$.}

Let $c$ be a non-zero code word of $\mc_{j,k}(n,q)$, with $\la_{j-1}(c) = \zero$.
We will find a lower bound on $\wt(c)$ by performing a double count on the set
\[
 S := \set{(P,\lambda) : P \in \supp_0(c), \, P \in \lambda \in \supp(c)}.
\]

We know that $c \neq \zero$ means that $\supp(c) \neq \emptyset$, hence $\supp_{j-1}(c) \neq \emptyset$.
Take a subspace $\iota \in \supp_{j-1}(c)$.
It follows from Lemma \ref{LmCoeffLa} that $\pa_{\iota}(c) \in \mh_{0,k-j}(n-j,q)$.
Recall that $\wt(\pa_\iota(c))$ equals the number of $j$-spaces of $\supp(c)$ through $\iota$.
Since $\iota \in \supp_{j-1}(c)$, this number is not zero.
Therefore, $\pa_\iota(c)$ is a non-zero code word of $\mh_{0,k-j}(n-j,q)$.
Thus, by Corollary \ref{CrlMinWtHullPoints}, we have that $\wt(\pa_\iota(c)) \geq 2q^{k-j}$.
Hence, $\supp(c)$ contains at least $2q^{k-j}$ $j$-spaces through $\iota$.
This yields that
\[
 | \supp_0(c) | \geq \theta_{j-1} + 2q^{k-j}(\theta_j - \theta_{j-1}) > 2q^k.
\]
Now take a point $P \in \supp_0(c)$.
On the one hand, Lemma \ref{LmCompLa} assures us that $\la(c)(P) = \la(\la_{j-1}(c))(P) = \la(\zero)(P) = 0$.
Lemma $\ref{LmCoeffLa}$ then implies that $\pa_P(c) \in \mh_{j-1,k-1}(n-1,q)$.
On the other hand, $P \in \supp_0(c)$, so $\pa_P(c) \neq \zero.$
Using the induction hypothesis, we get $\wt(\pa_P(c)) \geq 2q^{k-j}\gauss{k-1}{j-1}$.
Thus, the number of  $j$-spaces of $\supp(c)$ through $P$ is at least $2q^{k-j}\gauss{k-1}{j-1}$.
This yields that
\[
 \wt(c) \theta_j = |S| \geq |\supp_0(c)| \cdot  2q^{k-j}\gauss{k-1}{j-1} > 4 q^{2k-j}\gauss{k-1}{j-1}.
\]

One can check that 
\begin{align*}
\frac{q^k}{\theta_j} > \left(1-\frac{1}{q}\right) \frac{q^{k+1}-1}{q^{j+1}-1}
& & \text{and} & &
q^{k-j} > \left( 1- \frac1q \right)\frac{q^k-1}{q^j-1}.
\end{align*}
Therefore, if we take into account that $q \geq 11$, the above inequalities imply that
\[
 \wt(c) > 4 \frac{q^k}{\theta_j} q^{k-j} \gauss{k-1}{j-1}
 > 4 \left(1-\frac{1}{11}\right)^2 \frac{q^{k+1}-1}{q^{j+1}-1} \frac{q^k-1}{q^j-1} \gauss{k-1}{j-1} > 3.3 \gauss{k+1}{j+1}
\]

Note that, in particular, $\wt(c) > W(j,k,q)$.\\

\underline{Step 2: Applying this lower bound to characterise low weight code words.}

Assume that $c$ is a code word of $\mc_{j,k}(n,q)$, with $\wt(c) \leq W(j,k,q)$.
Now, double count the set
\[
 S := \sett{(\iota,\lambda)}{\iota \in \supp_{j-1}(c), \, \iota \subset \lambda \in \supp(c)}.
\]
We know that if $\iota \in \supp_{j-1}(c)$, then $\pa_\iota(c)$ is a non-zero code word of $\mc_{0,k-j}(n-j,q)$.
Therefore, $\wt(\pa_\iota(c)) \geq \theta_{k-j}$.
Note that $\wt(\pa_\iota(c))$ equals the number of $j$-spaces $\lambda \in \supp(c)$ through $\iota$.
Also note that $\supp(\la_{j-1}(c)) \subseteq \supp_{j-1}(c)$.
This yields
\[
 \wt(c) \theta_j 
 = |S|
 = \sum_{\iota \in \supp_{j-1}(c)} \wt(\pa_\iota(c))
 \geq \wt(\la_{j-1}(c)) \theta_{k-j}.
\]
This means that
\[
 \wt(\la_{j-1}(c)) \leq \frac{\theta_j}{\theta_{k-j}} \wt(c)
 \leq \frac{\theta_j}{\theta_{k-j}} W(j,k,q) = W(j-1,k,q).
\]

The last inequality relies on the fact that $\frac{\theta_j}{\theta_{k-j}}\gauss{k+1}{j+1} = \gauss{k+1}{j}$.

The induction hypothesis tells us that $\la_{j-1}(c)$ is a linear combination of at most two $k$-spaces.
Thus, $\la_{j-1}(c) = \alpha \kappa_1^{(j-1)} + \beta \kappa_2^{(j-1)}$, for some $\alpha, \beta \in \fp$, and $\kappa_i \in G_k$.
Note that $\alpha$ or $\beta$ can be zero.

Now assume that $c \neq \alpha \kappa_1^{(j)} + \beta \kappa_2^{(j)}$.
If $\supp(c) \subseteq G_j(\kappa_1) \cup G_j(\kappa_2)$, then $\supp(c - \alpha \kappa_1^{(j)} - \beta \kappa_2^{(j)}) \subseteq G_j(\kappa_1) \cup G_j(\kappa_2)$, which would mean that $c - \alpha \kappa_1 - \beta \kappa_2$ were a non-zero code word of $\ker(\la_{j-1}) \cap \mc_{j,k}(n,q)$ of weight at most $2 \gauss{k+1}{j+1}$, contradicting Step 1.

Therefore, there exists a $j$-space $\lambda \in \supp(c)$, with $\lambda \not \subset \kappa_1 \cup \kappa_2$.
Hence, we can choose a $(j-1)$-space $\iota \subset \lambda$, which is not entirely contained in $\kappa_1 \cup \kappa_2$.
This means that $\la_{j-1}(c)(\iota) = \alpha \kappa_1(\iota) + \beta \kappa_2(\iota) = 0$.
Since $\iota \in \supp_{j-1}(c)$, this means that $\wt(\pa_\iota(c)) \geq 2q^{k-j}$.
Hence, we find at least $2q^{k-j}$ $j$-spaces of $\supp(c)$ through $\iota$.
Note that all these $j$-spaces contain at least $\theta_j - 3 \theta_{j-1} = q^j - 2\theta_{j-1}$ points $P$ outside of $\iota$, $\kappa_1$ and $\kappa_2$.
Every such point $P$ lies in a unique $j$-space through $\iota$, hence there at least $2q^{k-j} (q^j-2\theta_{j-1})$ points in $\supp_0(c)$, outside of $\kappa_1 \cup \kappa_2$.
Since these points have value zero in $\la(c)$, they lie in at least $2q^{k-j} \gauss{k-1}{j-1}$ $j$-spaces of $\supp(c)$.
As in Step 1, we obtain

\[
 \wt(c) \theta_j \geq 2 q^{k-j} \underbrace{(q^j - 2\theta_{j-1})}_{ > q^j \frac{q-3}{q-1}} 2q^{k-j} \gauss{k-1}{j-1}
 > 4q^{2k-j} \frac{q-3}{q-1} \gauss{k-1}{j-1}.
\]

Therefore,

\[
 \wt(c) \geq 4 \left( 1 - \frac 1 q \right)^2 \frac{q-3}{q-1} \gauss{k+1}{j+1}
 > \left( 4 - \frac{16}q \right) \gauss{k+1}{j+1}
 > W(j,k,q),
\]
 a contradiction.
Hence, $c = \alpha \kappa_1^{(j)} + \beta \kappa_2^{(j)}$.\\

\underline{Step 3: The minimum weight of $\mh_{j,k}(n,q)$.}

The previous characterisation teaches us that the only code words of $\mh_{j,k}(n,q)$ of weight at most $W(j,k,q) \geq 2 \gauss{k+1}{j+1}$ are linear combinations of at most two $k$-spaces.
Take such a non-zero code word $c = \alpha \kappa_1 + \beta \kappa_2$.
Then $\alpha + \beta = c \cdot \one = 0$, due to Lemma \ref{LmHull}.
Since $\alpha$ and $\beta$ can't both be zero (then $c$ would be $\zero$), neither of them can be zero.
Write $s = \dim(\kappa_1 \cap \kappa_2)$, then $\wt(c) = 2\gauss{k+1}{j+1} - 2\gauss{s+1}{j+1}$.
This is minimal if $s$ is maximal.
Since $\kappa_1$ and $\kappa_2$ can't coincide (else $c$ would be $\zero$), the maximal value of $s$ is $k-1$.
This yields as minimum weight of $\mh_{j,k}(n,q)$
\[
 2\gauss{k+1}{j+1} - 2\gauss{k}{j+1} = 2q^{k-j} \gauss kj,
\]
and as minimum weight code words the scalar multiples of the difference of two distinct $k$-spaces through a $(k-1)$-space.
\end{proof}

The minimum weight of $\mh_{j,k}(n,q)$ has been an open problem for some time \cite[Open Problem 4.18]{lavrauwC}.
We have solved this problem for $j=0$ in Theorem \ref{ThmPointsKSpaces} and for general $j$ and sufficiently large $q$ in Theorem \ref{ThmLargeQMainThm}.
For smaller values of $q$, we can adapt the arguments to obtain the following weaker statement.

\begin{thm}
\label{ThmMainThmSmallQ}
 If $c$ is a code word of $\mc_{j,k}(n,q)$, with
 \[
 \wt(c) \leq \frac{2q^k}{\theta_j} \gauss k j,
 \]
 then $c = \alpha \kappa$, for some $\alpha \in \fp$, and $\kappa \in G_k$.
 As a consequence, the minimum weight of $\mh_{j,k}(n,q)$ is larger than $2q^k \gauss k j / \theta_j$.
\end{thm}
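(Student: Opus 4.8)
The plan is to adapt the proof of Theorem~\ref{ThmLargeQMainThm}, inducting on $j$ and peeling off one projective dimension at a time with the maps $\la_{j-1}$ and $\pa_\iota$. The statement is meaningful only for $j\geq 1$: for $j=0$ the bound equals $2q^k$, which is realised by a difference of two $k$-spaces, so that layer cannot serve as an instance but only as input. Accordingly I would take $j\geq 1$, assume the result for all smaller $j$, and feed Theorem~\ref{ThmPointsKSpaces} and Corollary~\ref{CrlMinWtHullPoints} in at the bottom of the recursion. The arithmetic engine is the identity $\gauss kj/\theta_{k-j}=\gauss k{j-1}/\theta_{j-1}$, which makes the weight bound of the theorem transform exactly into the bound one dimension down.

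First I would run two double counts. Counting incidences $(\iota,\lambda)$ with $\iota\in\supp_{j-1}(c)$ and $\iota\subset\lambda\in\supp(c)$, and using that $\pa_\iota(c)$ is a nonzero word of $\mc_{0,k-j}(n-j,q)$ of weight at least $\theta_{k-j}$ whenever $\iota\in\supp(\la_{j-1}(c))$ (Lemma~\ref{LmCoeffLa}, Lemma~\ref{LmJSpacesThroughPoint}), gives $\wt(\la_{j-1}(c))\leq\frac{\theta_j}{\theta_{k-j}}\wt(c)$, which by the identity above is at most the level-$(j-1)$ bound. The induction hypothesis then forces $\la_{j-1}(c)$ to be a scalar multiple $\alpha\kappa^{(j-1)}$ of a single $k$-space (for $j=1$ Theorem~\ref{ThmPointsKSpaces} only yields a combination of at most two $k$-spaces, and the genuine two-space case must be excluded separately; see below). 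Counting instead the incidences $(P,\lambda)$ with $P\in\supp_0(c)$ and invoking the minimum weight $\gauss kj$ of $\mc_{j-1,k-1}(n-1,q)$ yields the crucial \emph{upper} bound $|\supp_0(c)|\leq 2q^k$.

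Next I would establish a Step-1-type lower bound on $\ker(\la_{j-1})\cap\mc_{j,k}(n,q)$: if $\la_{j-1}(c)=\zero$ but $c\neq\zero$, then every $\iota\in\supp_{j-1}(c)$ has $\pa_\iota(c)\in\mh_{0,k-j}(n-j,q)\setminus\{\zero\}$, so $\wt(\pa_\iota(c))\geq 2q^{k-j}$ by Corollary~\ref{CrlMinWtHullPoints}, forcing $|\supp_0(c)|>2q^k$; and every $P\in\supp_0(c)$ has $\pa_P(c)\in\mh_{j-1,k-1}(n-1,q)\setminus\{\zero\}$ (Lemma~\ref{LmCompLa}, Lemma~\ref{LmCoeffLa}), of weight at least $\frac{2q^{k-1}}{\theta_{j-1}}\gauss{k-1}{j-1}$ by the induction hypothesis (or Corollary~\ref{CrlMinWtHullPoints} when $j=1$). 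A final double count then pushes $\wt(c)$ above the bound, so the kernel has minimum weight exceeding $\frac{2q^k}{\theta_j}\gauss kj$; in particular $\alpha\neq 0$ (the kernel case giving $c=\zero$). Now $\la(c)=\alpha\kappa^{(0)}$ is supported exactly on the point set $\kappa$, so $\kappa\subseteq\supp_0(c)$ and the excess $E:=\supp_0(c)\setminus\kappa$ satisfies $|E|\leq 2q^k-\theta_k$. If $E\neq\emptyset$, a point $P_0\in E$ has $\la(c)(P_0)=0$, hence $\pa_{P_0}(c)$ is a nonzero hull word lying on at least $\frac{2q^{k-1}}{\theta_{j-1}}\gauss{k-1}{j-1}$ $j$-spaces of $\supp(c)$, each dragging in at least $q^j$ points of $E$; this makes $|E|$ overshoot $2q^k-\theta_k$, a contradiction. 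Therefore $E=\emptyset$, i.e.\ $\supp(c)\subseteq G_j(\kappa)$, and then $c-\alpha\kappa^{(j)}$ is a word of $\ker(\la_{j-1})\cap\mc_{j,k}(n,q)$ of weight at most $\gauss{k+1}{j+1}<\frac{2q^k}{\theta_j}\gauss kj$, so it vanishes and $c=\alpha\kappa^{(j)}$. The consequence about $\mh_{j,k}(n,q)$ is immediate, since a nonzero scalar multiple of a $k$-space never lies in the hull.

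The main obstacle is keeping the numerics alive for \emph{small} $q$, which is exactly the regime this theorem targets. The purely global weight double counts that suffice in Theorem~\ref{ThmLargeQMainThm} are far too lossy here (they already fail for $q=3$); the decisive move is to play the upper bound $|\supp_0(c)|\leq 2q^k$ against a \emph{local} lower estimate of the excess support $E$ at a single point $P_0$, rather than against one global weight inequality, and carrying out the distinctness bookkeeping for the points contributed by the $j$-spaces through $P_0$ is the delicate part for general $j$. The second delicate point is the base case $j=1$: there $\la(c)$ could a priori be a difference of two $k$-spaces of weight exactly $2q^k$, which would force equality throughout the inequalities above; ruling this out requires analysing the resulting rigid configuration (using that a $j$-space covered by $\kappa_1\cup\kappa_2$ must lie in one of them) and observing that it would make $c$ equal to $\alpha_1\kappa_1^{(1)}+\alpha_2\kappa_2^{(1)}$, whose weight $2q^{k-1}\gauss k1$ exceeds the bound.
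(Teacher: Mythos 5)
Your overall strategy coincides with the paper's: induction on $j$ with the $j=0$ layer supplied by Theorem \ref{ThmPointsKSpaces} and Corollary \ref{CrlMinWtHullPoints}, a lower bound on the minimum weight of $\ker(\la_{j-1})\cap\mc_{j,k}(n,q)$, the double count giving $\wt(\la_{j-1}(c))\leq\frac{\theta_j}{\theta_{k-j}}\wt(c)$ combined with the identity $\gauss{k}{j}/\theta_{k-j}=\gauss{k}{j-1}/\theta_{j-1}$, and a final contradiction when $\supp(c)\not\subseteq G_j(\kappa)$. Your remark that the $j=0$ instance of the statement fails as such (a difference of two $k$-spaces through a common $(k-1)$-space has weight exactly $2q^k$), so that at $j=1$ the two-space alternative for $\la_0(c)$ must be excluded separately, is a legitimate point which the paper's own very terse proof glosses over; your sketch of the exclusion --- the resulting word would be $\alpha_1\kappa_1^{(1)}+\alpha_2\kappa_2^{(1)}$ of weight $2q^{k-1}\gauss{k}{1}>\frac{2q^k}{\theta_1}\gauss{k}{1}$ --- is the right fix.

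The one place where you genuinely deviate is the final contradiction, and as written that step does not close. You pick a point $P_0\in E:=\supp_0(c)\setminus\kappa$, note that $\pa_{P_0}(c)$ is a nonzero hull word so that $P_0$ lies on many $j$-spaces of $\supp(c)$, each containing at least $q^j$ points of $E$, and conclude that $|E|$ must exceed $2q^k-\theta_k$. But distinct $j$-spaces through a common \emph{point} can pairwise share up to $\theta_{j-1}$ further points (they could, for instance, be concentrated in a few $(j+1)$-spaces through $P_0$), so the union of their $E$-points need not come close to the product of the two counts; you flag this bookkeeping as ``delicate'' but do not supply it, and no straightforward inclusion--exclusion rescues it. The paper sidesteps the issue by localising at a $(j-1)$-space instead of a point: since $\supp(c)\not\subseteq G_j(\kappa)$, some $\lambda\in\supp(c)$ contains a $(j-1)$-space $\iota\not\subseteq\kappa$, whence $\la_{j-1}(c)(\iota)=0$ and $\pa_\iota(c)$ is a nonzero word of $\mh_{0,k-j}(n-j,q)$, so at least $2q^{k-j}$ $j$-spaces of $\supp(c)$ pass through $\iota$. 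Two such $j$-spaces meet exactly in $\iota$, so their points off $\iota$ are automatically distinct; this yields $|\supp_0(c)|\geq 2q^k+\theta_{j-1}$ and hence $\wt(c)\,\theta_j\geq(2q^k+\theta_{j-1})\gauss{k}{j}$, contradicting the hypothesis. Replacing your $P_0$-count by this $\iota$-count --- the same disjointness device you already use correctly in your kernel step --- repairs the argument; the rest of your proposal is sound.
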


\begin{proof}
The arguments are essentially the same as the ones used in the proof of Theorem \ref{ThmLargeQMainThm}, so we'll be brief.
Assume that $c$ is a non-zero code word of $\mc_{j,k}(n,q)$ with $\wt(c) \leq \frac{2q^k}{\theta_j}\gauss k j$ and the theorem holds for all smaller values of $j$.

Step 1: Assume that $\pa_{j-1}(c) = \zero$. Double count the set $S$ as in Step 1 above.
We obtain $\wt(c) \geq \frac{2q^k+\theta_{j-1}}{\theta_j} \frac{2q^{k-1}}{\theta_{j-1}}\gauss{k-1}{j-1} > \frac{2q^k}{\theta_j}\gauss k j$, a contradiction.

Step 2: Here we have, similar to the above proof,
\[
 \wt(\pa_{j-1}(c))
 \leq \frac{\theta_j}{\theta_{k-j}} \wt(c)
 \leq \frac{\theta_j}{\theta_{k-j}} \frac{2q^k}{\theta_j} \gauss k j
 = \frac{2q^k}{\theta_{k-j}} \frac{\theta_{k-j}}{\theta_{j-1}} \gauss k {j-1}
 = \frac{2q^k}{\theta_{j-1}} \gauss k {j-1}.
\]
Therefore, the induction hypothesis implies that $\la_{j-1}(c) = \alpha \kappa$ for some scalar $\alpha \in \fp^*$ and a $k$-space $\kappa$.
As above, if $c \neq \alpha \kappa$, then $\supp(c) \not \subseteq G_j(\kappa)$.
Thus, there exists a $(j-1)$-space $\iota \in \supp_{j-1}(c)$ with $\la_{j-1}(\iota) = 0$.
Then $\pa_\iota(c)$ is a non-zero code word of $\mh_{k-j}(n-j,q)$ and we know that $\supp_0(c) \geq 2q^k + \theta_{j-1}$.
Hence, $\wt(c) \theta_j \geq (2q^k + \theta_{j-1}) \gauss k j$, a contradiction.

Step 3: No scalar multiple of a $k$-space is a non-zero code word of $\mh_{j,k}(n,q)$.
\end{proof}

The authors expect that Theorem \ref{ThmLargeQMainThm} (2) holds for all values of $q$.
For instance, Theorem \ref{ThmLargeQMainThm} (1) can be proven to hold for $\mc_{1,2}(n,q)$, $q \neq 2$ up to weight $2 \theta_2$, which proves (2) for $\mh_{1,2}(n,q)$, $q \neq 2$.\\

As we have done in Remark \ref{RmkWeightSpec}, one can now study the weight spectrum of $\mc_{j,k}(n,q)$ up till weight $W(j,k,q)$ using Theorem \ref{ThmLargeQMainThm} and \ref{ThmMainThmSmallQ}.

\subsection*{The cyclicity of $\mc_{j,k}(n,q)$}

A natural question to ask is whether the codes $\mc_{j,k}(n,q)$ are cyclic.
A code $C$, where the code words are denoted as vectors, is \emph{cyclic} if for each code word $(c_1,\dots,c_n) \in C$ its \emph{right shift} $(c_n,c_1,c_2,\dots,c_{n-1})$ is also a code word of $C$.

It has been known for a long time that the codes $\mc_k(n,q)$ are cyclic, see e.g.\ \cite{delsarte}.
Denote $g := \gauss{n+1}{j+1}$.
Then $\mc_{j,k}(n,q)$ is equivalent to a cyclic code if and only if the following holds:
there exists some ordering on the $j$-spaces of $\pg(n,q)$ (write $G_j(n,q) = \set{ \lambda_1, \lambda_2, \dots, \lambda_g }$ and let $\lambda_0$ be equal to $\lambda_g$) such that if $c \in \mc_{j,k}(n,q)$, then $R(c) \in \mc_{j,k}(n,q)$ as well, with $R(c)(\lambda_i) := c(\lambda_{i-1})$.

Given a $k$-space $\kappa$, this would mean that $R(\kappa)$ is also a code word of $\mc_{j,k}(n,q)$.
Furthermore, it's easy to see that $\wt(R(\kappa)) = \wt(\kappa) = \gauss{k+1}{j+1}$, and that $R(\kappa)$ only takes the values 0 and 1.
By Result \ref{ResLowestWeight}, this means that $R(\kappa) = \kappa'$ for some $k$-space $\kappa'$.

This means that the map $f: G_j \rightarrow G_j: \lambda_i \mapsto \lambda_{i-1}$ maps the $j$-spaces in a certain $k$-space to the $j$-spaces of another $k$-space.
But then $f$ can be extended to a collineation on all subspaces of $\pg(n,q)$.
Note that $f$ works cyclically on the $j$-spaces, meaning that the permutation group generated by $f$ has a unique orbit when viewed as permutation group of $G_j$.

Conversely, if such a collineation $f$ exists, we can choose a $\lambda \in G_j$ and write $\lambda_1 = \lambda$, and $\lambda_{i+1} = f(\lambda_i)$.
Under this ordering of the $j$-spaces, $\mc_{j,k}(n,q)$ is cyclic.
This yields the following statement:

\begin{obs}
 \label{ObsCyclic}
The code $\mc_{j,k}(n,q)$ is equivalent to a cyclic code if and only if there exists a collineation $f$ of $\pg(n,q)$, working cyclically on the $j$-spaces.
\end{obs}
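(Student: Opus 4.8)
The plan is to establish the two implications of the biconditional separately. The $(\Leftarrow)$ direction is short and concrete, while the $(\Rightarrow)$ direction requires upgrading a mere permutation of $G_j$ to (the restriction of) a collineation of $\pg(n,q)$, which is where the real work lies.

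For $(\Leftarrow)$, suppose a collineation $f$ of $\pg(n,q)$ acts on $G_j$ as a single cycle of length $g = \gauss{n+1}{j+1}$. Fix $\lambda \in G_j$ and set $\lambda_1 = \lambda$ and $\lambda_{i+1} = f(\lambda_i)$; since $f$ has a unique orbit on $G_j$, this enumerates every $j$-space exactly once. Under this ordering the right shift satisfies $R(c) = c \circ f^{-1}$, because $\lambda_{i-1} = f^{-1}(\lambda_i)$. As $f$ is a collineation it preserves inclusion and sends $k$-spaces to $k$-spaces, so for a generator $\kappa^{(j)}$ one computes $R(\kappa^{(j)})(\lambda) = \kappa^{(j)}(f^{-1}(\lambda)) = 1$ exactly when $\lambda \subseteq f(\kappa)$, i.e.\ $R(\kappa^{(j)}) = f(\kappa)^{(j)}$. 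Hence $R$ permutes the generating set $G_k^{(j)}$ of $\mc_{j,k}(n,q)$, and since $R$ is linear it fixes the code setwise. Thus $\mc_{j,k}(n,q)$ is cyclic under this ordering, i.e.\ equivalent to a cyclic code.

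For $(\Rightarrow)$, suppose some ordering $\lambda_1,\dots,\lambda_g$ makes $\mc_{j,k}(n,q)$ cyclic, with right shift $R$. Applying $R$ to a generator $\kappa^{(j)}$ yields a code word that is $\set{0,1}$-valued of weight $\gauss{k+1}{j+1}$, hence a minimum weight code word; by Result \ref{ResLowestWeight} it must equal $\kappa'^{(j)}$ for some $k$-space $\kappa'$. Writing $f(\lambda_i) = \lambda_{i-1}$ for the induced permutation of $G_j$, this says precisely that $f$ maps $G_j(\kappa)$ bijectively onto $G_j(\kappa')$ for every $k$-space $\kappa$, and by construction $f$ is a single $g$-cycle, so it acts cyclically on $G_j$. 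It remains to promote $f$ to a collineation, which I expect to be the main obstacle.

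To carry out that extension I would first note that $\kappa \mapsto \kappa'$ is a well-defined bijection $\bar f$ of $G_k$ (a $k$-space is determined by the $j$-spaces it contains, and $f$ permutes the collection $\set{G_j(\kappa) : \kappa \in G_k}$), and that $f,\bar f$ preserve inclusion in the order-preserving sense $\lambda \subseteq \kappa \Leftrightarrow f(\lambda) \subseteq \bar f(\kappa)$. Consequently the number of $k$-spaces through a pair of distinct $j$-spaces $\lambda,\mu$, which depends only on $\dim\vspan{\lambda,\mu}$ and is strictly decreasing in that dimension, is preserved; so $f$ preserves adjacency in the Grassmann graph (the case $\dim\vspan{\lambda,\mu} = j+1$). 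I would then invoke Chow's theorem on the Grassmann graph of $j$-spaces (for $j=0$, where adjacency is trivial, I would instead recover lines as intersections of $k$-spaces, using $G_j(\kappa_1) \cap G_j(\kappa_2) = G_j(\kappa_1 \cap \kappa_2)$, and apply the fundamental theorem of projective geometry). Chow's theorem gives that $f$ is induced by a collineation or a correlation of $\pg(n,q)$; the correlation alternative is excluded because a correlation reverses inclusion and would send each sub-Grassmannian $G_j(\kappa)$ to a \emph{star} of $j$-spaces rather than to another sub-Grassmannian $G_j(\kappa')$, contradicting the order-preserving incidence established above. Hence $f$ extends to a collineation, which by construction acts cyclically on $G_j$, completing the argument. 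The delicate points to verify carefully are the strict monotonicity of the count $\gauss{n-d}{k-d}$ in $d$ and the exclusion of the correlation case; both are standard once the incidence between $f$ and $\bar f$ is in place.
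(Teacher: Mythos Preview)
Your argument is correct and mirrors the paper's reasoning almost exactly: both directions proceed as you describe, with the $(\Rightarrow)$ direction hinging on Result~\ref{ResLowestWeight} to see that $R(\kappa^{(j)})$ is again the indicator of a $k$-space, so the shift permutation $f:\lambda_i\mapsto\lambda_{i-1}$ sends each $G_j(\kappa)$ to some $G_j(\kappa')$. The paper then simply asserts ``But then $f$ can be extended to a collineation on all subspaces of $\pg(n,q)$'' and records the statement as an Observation; you go further and actually justify this extension via Chow's theorem (handling $j=0$ separately with the fundamental theorem, and excluding correlations by noting they would send a sub-Grassmannian $G_j(\kappa)$ to a star rather than to another $G_j(\kappa')$). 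So the route is the same; your version supplies the one non-trivial step the paper leaves to the reader.

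Two minor remarks. First, with your convention $f(\lambda_i)=\lambda_{i-1}$ one gets $R(\kappa^{(j)})=\kappa^{(j)}\circ f$, so in fact $f$ carries $G_j(\kappa')$ onto $G_j(\kappa)$ rather than the direction you wrote; this is harmless since the same analysis applies to $f^{-1}$. Second, in the $j=0$ case a single pairwise intersection of $k$-spaces need not be a line when $n<2k-1$; it is cleaner to recover a line $\ell$ as $\bigcap_{\ell\subseteq\kappa}G_0(\kappa)$ and then use that $f$ preserves this intersection and its cardinality $q+1$, forcing the image to be a line before invoking the fundamental theorem.
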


It is folklore under finite geometers that the collineations with largest order are Singer cycles, which act cyclically on the points and hyperplanes.
However, a reference is hard to find.
We will use a similar (but in this context weaker) result that suits our purpose.

\begin{res}[{\cite[Corollary 2]{darafsheh}}]\label{ResOrderCollineation}
The maximal order of an element of $\textnormal{GL}(n,q)$ is $q^{n}-1$.
\end{res}

This leads to the following Theorem.

\begin{thm}
 \label{ThmCyclic}
The code $\mc_{j,k}(n,q)$ is equivalent to a cyclic code if and only if $j = 0$.
\end{thm}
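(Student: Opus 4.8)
The plan is to invoke Observation \ref{ObsCyclic}, which reduces the statement to a question about collineations: $\mc_{j,k}(n,q)$ is equivalent to a cyclic code if and only if $\pg(n,q)$ admits a collineation $f$ whose powers act with a single orbit on $G_j$, equivalently such that $f$ restricts to a single $\gauss{n+1}{j+1}$-cycle on $G_j$. For $j=0$ this is classical: a Singer cycle permutes the points of $\pg(n,q)$ in one cycle, which settles the ``if'' direction (and recovers the known cyclicity of $\mc_k(n,q)$). The entire content is therefore the ``only if'' direction, i.e.\ showing that for $j \geq 1$ no such collineation exists. The first observation is that if $f$ acts as a single $\gauss{n+1}{j+1}$-cycle on $G_j$, then its order is at least $\gauss{n+1}{j+1} = |G_j|$, so it suffices to bound the order of an arbitrary collineation of $\pg(n,q)$ and show it is strictly smaller than this number.

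Next I would bound the collineation order using Result \ref{ResOrderCollineation}. Since $1 \leq j < k < n$ forces $n \geq 3$, the collineation group of $\pg(n,q)$ is $\mathrm{P}\Gamma\mathrm{L}(n+1,q)$, and passing to this projective quotient cannot increase element orders, so it is enough to bound orders in $\Gamma\mathrm{L}(n+1,q)$. The key point is that any $\sigma$-semilinear map of $\mathbb{F}_q^{n+1}$ is additive and $\fp$-linear, because $\sigma$ fixes $\fp$ pointwise. Viewing $\mathbb{F}_q^{n+1}$ as the $\fp$-vector space $\mathbb{F}_p^{(n+1)h}$ thus produces an injective, order-preserving group homomorphism $\Gamma\mathrm{L}(n+1,q) \hookrightarrow \mathrm{GL}\big((n+1)h,p\big)$. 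By Result \ref{ResOrderCollineation}, every element of $\mathrm{GL}\big((n+1)h,p\big)$ has order at most $p^{(n+1)h}-1 = q^{n+1}-1$, so every collineation of $\pg(n,q)$ has order at most $q^{n+1}-1$.

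Finally I would close with a routine inequality. Since $1 \leq j \leq n-2$ we have $2 \leq j+1 \leq n-1$, and by the symmetry $\gauss{n+1}{j+1} = \gauss{n+1}{n-j}$ together with the unimodality of Gaussian binomials, $\gauss{n+1}{j+1} \geq \gauss{n+1}{2}$. It then remains to verify
\[
 q^{n+1}-1 < \gauss{n+1}{2} = \frac{(q^{n+1}-1)(q^n-1)}{(q^2-1)(q-1)},
\]
which is equivalent to $(q^2-1)(q-1) < q^n-1$ and holds for every $q$ as soon as $n \geq 3$. Hence the order of any collineation is strictly less than $|G_j|$, so no collineation can act as a single $\gauss{n+1}{j+1}$-cycle on $G_j$, and by Observation \ref{ObsCyclic} the code $\mc_{j,k}(n,q)$ is not equivalent to a cyclic code when $j \geq 1$. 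The main obstacle is obtaining a sufficiently sharp bound on collineation orders: naively handling the field-automorphism component only yields something like $h(q^{n+1}-1)$, which is too weak in small cases (for instance $\pg(3,4)$), whereas the passage to the prime field via $\Gamma\mathrm{L}(n+1,q) \hookrightarrow \mathrm{GL}((n+1)h,p)$ delivers precisely the bound $q^{n+1}-1$ that the comparison requires.
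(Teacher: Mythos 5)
Your proposal is correct and follows essentially the same route as the paper: reduce to Observation \ref{ObsCyclic}, use Singer cycles for $j=0$, and for $j\geq 1$ bound the order of any collineation by $q^{n+1}-1$ via $\mathrm{P}\Gamma\mathrm{L}(n+1,q) \hookrightarrow \mathrm{GL}((n+1)h,p)$ and Result \ref{ResOrderCollineation}, then compare with $\gauss{n+1}{j+1}$. Your explicit verification of the final inequality via symmetry and unimodality of the Gaussian coefficients is a detail the paper leaves to the reader, but the argument is the same.
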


\begin{proof}
In the codes we consider, we have the restriction $0 \leq j < k < n$.
By Observation \ref{ObsCyclic}, we need to prove that some collineations work cyclically on the points, but no collineation works cyclically on the $j$-spaces if $0 < j < n-1$.
It is known that Singer cycles are collineations working cyclically on the points and hyperplanes of $\pg(n,q)$, and that such collineations exist for any Desarguesian projective space.
Hence, this proves that $\mc_{k}(n,q)$ is equivalent to a cyclic code.

Now assume that $1 \leq j \leq n-2$.
Let $f$ be a collineation on $\pg(n,q)$.
The Fundamental Theorem of projective geometry teaches us that $f \in \text{P$\Gamma$L}(n+1,q)$.
This is a quotient group of $\Gamma \text{L}(n+1,q)$, which is a subgroup of $\text{GL}((n+1)h,p)$.
Therefore, the order of $f$ cannot exceed the maximal order of an element of $\text{GL}((n+1)h,p)$, which is $p^{(n+1)h} - 1 = q^{n+1}-1$, by Result \ref{ResOrderCollineation}.
But if $f$ would work cyclically on the $j$-spaces of $\pg(n,q)$, then its order would be a multiple of $\gauss{n+1}{j+1}$, which exceeds $q^{n+1}-1$ if $n \geq 3$ and $1 \leq j \leq n-2$.
This contradiction concludes the proof.
\end{proof}

\section{Minimum weight of the dual code}
\label{SectDual}

Throughout \cite{lins} and Section \ref{SectPointsKSpaces} and \ref{SectJKSpaces}, we characterise small weight code words of $\mc_{j,k}(n,q)$ by starting from $\mc_{0,1}(2,q)$ and using induction to generalise the results.
Unfortunately, it is not possible to do something similar for the dual code.
The problem of determining the minimum weight of $\mc_{0,1}(2,q)^\perp$, and characterising minimum weight code words, is still open in general.
However, we can work in opposite direction, and reduce the minimum weight problem of $\mc_{j,k}(n,q)^\perp$ to the codes $\mc_{0,1}(n,q)^\perp$.
A construction by Bagchi \& Inamdar is key.

\begin{constr}[{\cite[Lemma 4]{bagchi}}]\label{ConstrPullBack}
Consider the code $\mc_{j,k}(n,q)^\perp$.
Take a $(j-1)$-space $\iota$, and an $(n-j)$-space $\pi$, skew to $\iota$.
Let $\pi$ play the role of $\pg(n-j,q)$, and let $c$ be a code word of $\mc_{k-j}(n-j,q)^\perp$.
Define $c^+_\iota \in V(j,n,q)$ as
\[
 c^+_\iota(\lambda) := 
 \begin{cases}
  c(\lambda \cap \pi) & \text{if } \iota \subset \lambda, \\
  0 & \text{otherwise.}
 \end{cases}
\]
Then $c_\iota^+ \in \mc_{j,k}(n,q)^\perp$ and $\wt(c_\iota^+) = \wt(c)$.
Code words of this form are called \emph{pull-backs}.
\end{constr}

\begin{proof}
A $j$-space $\lambda$ lies in $\supp(c_\iota^+)$ if and only if $\lambda$ contains $\iota$, and intersects $\pi$ in a point of $\supp(c)$.
Since every point of $\pi$ lies in a unique $j$-space through $\iota$, we get $\wt(c_\iota^+) = \wt(c)$.
Now take a $k$-space $\kappa$.
If $\iota \not \subset \kappa$, then $\kappa$ contains no $j$-spaces of $\supp(c_\iota^+)$, hence $\kappa \cdot c_\iota^+ = 0$.
If $\iota \subset \kappa$, then it easy to see that $\kappa \cdot c_\iota^+ = (\kappa \cap \pi) \cdot c = 0$.
The last equality holds because $\kappa$ intersects $\pi$ in a $(k-j)$-space, and $c \in \mc_{k-j}(n-j,q)^\perp$.
\end{proof}

\begin{rmk}
 \label{RmkPullBack}
A code word $c \in \mc_{j,k}(n,q)$ is a pull-back if and only if all $j$-spaces of $\supp(c)$ go through the same $(j-1)$-space $\iota$.
If the latter holds, then $\pa_\iota(c) \in \mc_{k-j}(n-j,q)^\perp$, and $c = (\pa_\iota (c))_\iota^+$.
\end{rmk}

The previous remark asserts that the standard words of $\mc_{j,k}(n,q)^\perp$ (see Definition \ref{DefStandardWord}) are pull-backs if $j > 0$.
In fact, they are pull-backs of standard words of $\mc_{k-j}(n-j,q)^\perp$.
Bagchi \& Inamdar \cite[Conjecture]{bagchi} conjectured that the minimum weight code words of $\mc_{j,k}(n,p)^\perp$ are standard words, for $p$ prime.
They proved it for $j = k-1$, see Result \ref{ResMaxMinWtDualCode}, and $q=2$ \cite[Proposition 3]{bagchi}.
They also mention that it can be proven in the case $j=0$, using the theory of \cite{delsarte}.
Lavrauw, Storme \& Van de Voorde \cite[Theorem 12]{lavrauwB} gave a geometric proof for the case $j=0$, using Result \ref{ResMaxMinWtDualCode}.
We give a short, alternative proof.
This requires the following result, which is a slight alteration of the original statement using Lemma \ref{LmHull} (2).

\begin{res}[{\cite[Theorem 5.7.9]{assmus}}]\label{ResPPrimeHullIsDual}
If $p$ is prime, then $\mc_k(n,p)^\perp = \mh_{n-k}(n,p)$.
\end{res}

\begin{crl}
 \label{CrlMinWeightDualPrimePoints}
If $p$ is prime, the minimum weight code words of $\mc_k(n,p)^\perp$ are the scalar multiples of the standard words.
\end{crl}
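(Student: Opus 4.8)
The plan is to deduce this directly from Corollary \ref{CrlMinWtHullPoints} by rewriting the dual code as a hull. Since $p$ is prime, Result \ref{ResPPrimeHullIsDual} identifies $\mc_k(n,p)^\perp = \mh_{n-k}(n,p) = \mh_{0,n-k}(n,p)$, so it suffices to determine the minimum weight code words of the hull $\mh_{0,n-k}(n,p)$, which Corollary \ref{CrlMinWtHullPoints} has already characterised. In other words, the entire content of the statement is already packaged in the point-hyperplane hull result, and the only task is to transport it across the identification given by Result \ref{ResPPrimeHullIsDual} and to recognise the resulting code words as \emph{standard words}.

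Concretely, I would apply Corollary \ref{CrlMinWtHullPoints} with $k$ replaced by $n-k$. The standing hypothesis $0 < k < n$ forces $0 < n-k < n$, so this substitution is legitimate. The corollary then yields that the minimum weight of $\mh_{0,n-k}(n,p)$ equals $2p^{n-k}$ and that its minimum weight code words are exactly the scalar multiples of the difference of two $(n-k)$-spaces through a common $(n-k-1)$-space. Before invoking the characterisation part of the corollary, I would check that this minimum weight $2p^{n-k}$ indeed falls within the admissible range $W(n-k,p)$: since every prime $p$ lies in $Q_1 \cup Q_2 \cup Q_3$, one verifies case by case from Definition \ref{DefW} that $2p^{n-k} \leq W(n-k,p)$, with equality precisely in the $Q_1$ case.

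The final step is to recognise these geometric objects as the standard words. Unwinding Definition \ref{DefStandardWord} for $j = 0$, the $(j-1)$-space $\iota$ is empty and the standard word is $\pi^{(0)} - \rho^{(0)}$ for two $(n-k)$-spaces $\pi,\rho$ sharing a common $(n-k-1)$-space; this is literally the difference of two $(n-k)$-spaces through a common $(n-k-1)$-space produced by the corollary. Hence the minimum weight code words of $\mc_k(n,p)^\perp$ are exactly the scalar multiples of the standard words, of weight $2p^{n-k}$, in agreement with Result \ref{ResMaxMinWtDualCode}.

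Since all of the genuine work is hidden in Result \ref{ResPPrimeHullIsDual} and Corollary \ref{CrlMinWtHullPoints}, the argument is essentially bookkeeping. The steps requiring the most care --- the closest thing to an obstacle --- are the index shift $k \mapsto n-k$ together with the verification $2p^{n-k} \leq W(n-k,p)$ (so that the characterisation, and not merely the minimum-weight value, of Corollary \ref{CrlMinWtHullPoints} applies), and the check that the code words delivered by that corollary coincide verbatim with the $j=0$ standard words rather than merely sharing their weight.
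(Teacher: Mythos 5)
Your proposal is correct and follows exactly the paper's own route: identify $\mc_k(n,p)^\perp$ with the hull $\mh_{0,n-k}(n,p)$ via Result \ref{ResPPrimeHullIsDual}, apply Corollary \ref{CrlMinWtHullPoints} with $k$ replaced by $n-k$, and observe that the $j=0$ standard words are precisely the differences of two $(n-k)$-spaces through a common $(n-k-1)$-space. The extra verification that $2p^{n-k}\leq W(n-k,p)$ is harmless but not needed, since the ``in particular'' clause of Corollary \ref{CrlMinWtHullPoints} already asserts the minimum weight and the characterisation of the minimum weight code words directly.
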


\begin{proof}
A standard word of $\mc_k(n,p)^\perp$ is the difference of two $(n-k)$-spaces through an $(n-k-1)$-space.
This corollary now follows directly from Corollary \ref{CrlMinWtHullPoints} and Result \ref{ResPPrimeHullIsDual}.
\end{proof}

Putting these considerations together simplifies the conjecture of Bagchi \& Inamdar.
To finish the proof of the conjecture, we need to show that minimum weight code words of $\mc_{j,k}(n,q)^\perp$, $j>0$ and $q$ prime, are pull-backs.
It will turn out $q$ need not even be prime.

\begin{lm}
 \label{LmPullBack}
If $j> 0$, then all code words $c \in \mc_{j,j+1}(n,q)^\perp$, with $\wt(c) < 2 \theta_{n-j-1}$, are pull-backs.
In particular, this applies to the minimum weight code words.
\end{lm}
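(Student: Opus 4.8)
The plan is to argue by contradiction: assume $c \neq \zero$ is a code word of $\mc_{j,j+1}(n,q)^\perp$ with $\wt(c) < 2\theta_{n-j-1}$ that is \emph{not} a pull-back, and derive $\wt(c) \geq 2\theta_{n-j-1}$. First I would record the two basic consequences of the dual condition. Since $c$ is orthogonal to every $(j+1)$-space, the values of $c$ on the $j$-subspaces of any $(j+1)$-space sum to $0$; in particular every $(j+1)$-space meeting $\supp(c)$ contains at least two support $j$-spaces. Now any two distinct $j$-spaces inside a common $(j+1)$-space meet in a $(j-1)$-space, and conversely two $j$-spaces meeting in a $(j-1)$-space span a unique $(j+1)$-space. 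Since a fixed $\lambda_0 \in \supp(c)$ lies in exactly $\theta_{n-j-1}$ $(j+1)$-spaces, and distinct such spaces give distinct further support elements, every support element is \emph{linked} (meets in a $(j-1)$-space) to at least $\theta_{n-j-1}$ others. Note also $\theta_{n-j-1} > q^{n-j-1}$, so the bound indeed covers the minimum weight code words, which by Result~\ref{ResMaxMinWtDualCode} have weight at most $2q^{n-j-1}$.

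By Remark~\ref{RmkPullBack}, $c$ is a pull-back exactly when $\supp(c)$ is a \emph{star}, i.e.\ all its members share a common $(j-1)$-space. I would then split according to whether $\supp(c)$ is an intersecting family, invoking the classical classification of cliques in the Grassmann graph: a family of pairwise $(j-1)$-intersecting $j$-spaces is contained either in a star or in a \emph{top}, i.e.\ in the set of all $j$-subspaces of a fixed $(j+1)$-space. If all pairs of support elements are linked, then, as $\supp(c)$ is not a star, it must lie inside a single $(j+1)$-space $M$. This case can be disposed of directly: pick any $\lambda \in \supp(c)$ and, using $n \geq j+2$, a point $P \notin M$; the $(j+1)$-space $M' := \vspan{\lambda, P}$ satisfies $M' \cap M = \lambda$, so $\lambda$ is the \emph{only} support $j$-space inside $M'$, whence the dual condition on $M'$ forces $c(\lambda) = 0$, a contradiction.

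In the remaining case there are two support elements $\lambda_1, \lambda_2$ with $\dim(\lambda_1 \cap \lambda_2) \leq j-2$. Then no $(j+1)$-space contains both, so the families of $(j+1)$-spaces through $\lambda_1$ and through $\lambda_2$ are disjoint. Writing $A$ (resp.\ $B$) for the set of support elements linked to $\lambda_1$ (resp.\ $\lambda_2$), the first paragraph gives $|A|, |B| \geq \theta_{n-j-1}$, while neither $\lambda_1$ nor $\lambda_2$ lies in $A \cup B$ (they are not linked to each other). Hence $\wt(c) \geq |A| + |B| - |A \cap B| + 2$, and the argument closes whenever $A \cap B = \emptyset$, yielding $\wt(c) \geq 2\theta_{n-j-1} + 2$.

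The main obstacle is precisely controlling $A \cap B$. A short dimension count shows that a support element lying in both must meet each of $\lambda_1, \lambda_2$ in a $(j-1)$-space, which is impossible once $\dim(\lambda_1 \cap \lambda_2) \leq j-3$ but can occur in the boundary case $\dim(\lambda_1 \cap \lambda_2) = j-2$ (for $j=1$ this is exactly a transversal of two skew lines). Thus the clean factor-$2$ argument goes through whenever some non-linked pair meets in dimension $\leq j-3$; the delicate situation is when \emph{every} non-linked pair meets in dimension exactly $j-2$, so that $A \cap B$ is forced to be nonempty and the naive subtraction no longer yields a contradiction. Here I would bound the common links by noting they all lie in the $(j+2)$-space $\vspan{\lambda_1, \lambda_2}$ and are pinned down by their traces on $\lambda_1$ and $\lambda_2$, giving at most $(q+1)^2$ of them, and then either sharpen the lower bounds on $|A|, |B|$ or restrict $c$ to $\vspan{\lambda_1, \lambda_2}$ (restriction to a subspace preserves membership in the dual code) to reduce to a low-dimensional instance handled by a dedicated count. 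I expect the two genuinely resistant regimes to be $j=1$ and $n = j+2$, both of which are entirely boundary cases, so I would treat them as the base of an induction and bootstrap the general statement from there.
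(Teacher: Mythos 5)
Your opening reductions are sound: the orthogonality relation forces every $(j+1)$-space meeting $\supp(c)$ to contain at least two support elements, each support element is linked to at least $\theta_{n-j-1}$ others, the clique classification (star or top) is correct, and the top case is correctly eliminated. But the proof has a genuine gap exactly where you flag it, and the gap is larger than you suggest. The estimate $\wt(c) \geq |A| + |B| - |A \cap B| + 2$ only closes the argument when $A \cap B = \emptyset$, and your own dimension count shows this can fail precisely when $\dim(\lambda_1 \cap \lambda_2) = j-2$. You describe that situation as a boundary phenomenon confined to $j=1$ and $n=j+2$, but it is not: for $j=1$ every non-linked pair consists of skew lines, i.e.\ meets in dimension $j-2 = -1$, so for $j=1$ the ``delicate situation'' is the \emph{only} situation; and for arbitrary $j$ and $n$ a putative counterexample could have all of $\supp(c)$ through a common $(j-2)$-space, in which case again every non-linked pair meets in dimension exactly $j-2$. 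Neither proposed repair closes the case: the bound $|A \cap B| \leq (q+1)^2$ only yields $\wt(c) \geq 2\theta_{n-j-1} - (q+1)^2 + 2$, which falls short of $2\theta_{n-j-1}$ for every $q$ and every $n$; and restricting $c$ to $\vspan{\lambda_1,\lambda_2}$ produces a code word of $\mc_{j,j+1}(j+2,q)^\perp$ whose weight you cannot control and whose analysis is the $n = j+2$ instance of the very statement being proved.

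The paper avoids the $A \cap B$ problem by counting per $(j-1)$-space rather than per support element. For a fixed $(j-1)$-space $\iota$, let $x$ be the number of support elements through $\iota$. Choosing one of them, $\lambda_1$, at least $\theta_{n-j-1} - x + 1$ of the $(j+1)$-spaces through $\lambda_1$ contain no other support element through $\iota$; each of these must contain a support element \emph{not} through $\iota$, and a short argument shows these contributions remain pairwise distinct even as $\lambda_1$ ranges over all $x$ choices. This gives $\wt(c) \geq x(\theta_{n-j-1} - x + 2)$, whence, under the weight hypothesis, $x \in \{0,1\}$ or $x > \theta_{n-j-1}$ for \emph{every} $(j-1)$-space $\iota$. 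A second count then finishes: some $\iota$ has more than $\theta_{n-j-1}$ support elements through it, and any support element $\lambda$ not through $\iota$ lies in at least $\theta_{n-j-1} - 1$ $(j+1)$-spaces avoiding $\iota$, each contributing a further support element not through $\iota$, so $\wt(c) > 2\theta_{n-j-1}$. If you want to keep your framework, this quadratic dichotomy is the missing ingredient; the union bound over the two link-neighbourhoods cannot substitute for it.
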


\begin{proof}
Take a non-zero code word $c \in \mc_{j,j+1}(n,q)^\perp$, with $\wt(c) < 2 \theta_{n-j-1}$.
Take a $(j-1)$-space $\iota$, define $X := \set{ \lambda \in \supp(c) : \iota \subset \lambda}$, and denote $x := |X|$.
Assume that $X \neq \emptyset$.

Take a $j$-space $\lambda_1 \in X$.
Then every other element $\lambda_2$ of $X$ lies is a unique $(j+1)$-space through $\lambda_1$.
Therefore, there are at least $\gauss{n-j}{(j+1)-j} - (x-1) = \theta_{n-j-1} - x + 1$ $(j+1)$-spaces $\kappa$ through $\lambda_1$, not containing another element of $X$.
Each such space $\kappa$ contains another element $\lambda_3$ of $\supp(c) \setminus X$, otherwise $\kappa \cdot c = c(\lambda_1) \neq 0$, contradicting the fact that $c \in \mc_{j,j+1}(n,q)^\perp$.
Note that $\lambda_3$ doesn't lie in a $(j+1)$-space with another element $\lambda_2 \in X \setminus \set{\lambda_1}$.
Otherwise, $\lambda_2$ would intersect $\lambda_1$ in $\iota$ and $\lambda_3$ in another $(j-1)$-space (since $\lambda_3 \not \in X$), which implies that $\lambda_2 \subset \vspan{\lambda_1,\lambda_3} = \kappa$.
This is in contradiction with the way we chose $\kappa$.

Thus, every $\lambda_1 \in X$ gives rise to at least $\theta_{n-j-1} - x + 1$ elements in $\supp(c) \setminus X$, none of which are counted twice.
This yields
\[
 2\theta_{n-j-1} >
 \wt(c) \geq
 x(\theta_{n-j-1} - x + 1 + 1).
\]
This leads to a contradiction for $x=2$ and $x=\theta_{n-j-1}$.
Since the above expression is quadratic in $x$, we can see that it must lead to a contradiction whenever $2 \leq x \leq \theta_{n-j-1}$.

Now take a $j$-space $\lambda_1 \in \supp(c)$ and a $(j+1)$-space $\kappa$ through $\lambda_1$.
As argued above, we know that $\kappa$ must contain another $j$-space $\lambda_2 \in \supp(c)$.
Then $\lambda_1 \cap \lambda_2$ must be some $(j-1)$-space $\iota$.
By the previous arguments, we know that there are at least $\theta_{n-j-1} + 1$ elements of $\supp(c)$ through $\iota$.
Assume that $\lambda$ is an element of $\supp(c)$ not through $\iota$.
Then there is at most one $(j+1)$-space through $\lambda$ containing $\iota$.
This means that there are at least $\theta_{n-j-1} - 1$ $(j+1)$-spaces through $\lambda$, all containing another element of $\supp(c)$ not through $\iota$.
This yields $\wt(c) \geq (\theta_{n-j-1} + 1) + 1 + (\theta_{n-j-1} - 1) > 2\theta_{n-j-1}$, a contradiction.

Therefore, all elements of $\supp(c)$ contain a common $(j-1)$-space $\iota$.
By Remark \ref{RmkPullBack}, this proves that $c$ is a pull-back.
This applies to the minimum weight code words, since the minimum weight of $\mc_{j,j+1}(n,q)$ is at most $2q^{n-j-1}$, see Result \ref{ResMaxMinWtDualCode}.
\end{proof}

The previous lemma was an induction base for the main theorem of this section.
Its proof requires the following construction.

\begin{constr}
 \label{ConstrEmbedding}
 \cite[Theorem 10]{lavrauwB}
Take an $n$-space $\pi$ in $\pg(n+m,q)$ and a code word $c \in \mc_{j,k}(n,q)^\perp \leq V(j,\pi)$. 
Now define $c' \in V(j,n+m,q)$ as
\[
 c'(\lambda) := 
 \begin{cases}
  c(\lambda) & \text{if } \lambda \subset \pi \\
  0 & \text{otherwise}
 \end{cases}.
\]
Then $c' \in \mc_{j,k+m}(n+m,q)$ and $\wt(c') = \wt(c)$.
We call $c'$ an \emph{embedded code word} or a \emph{code word embedded in an $n$-space}.
\end{constr}

\begin{proof}
Take a $(k+m)$-space $\rho$ in $\pg(n+m,q)$.
Then $\rho$ intersects $\pi$ in a space of dimension at least $k$.
As a consequence, we can write $\rho \cap \pi$ (as element of $V(j,\pi)$) as the sum of its $k$-dimensional subspaces.
This yields
\[
 \rho \cdot c'
 = (\rho \cap \pi) \cdot c
 = \left( \sum_{\kappa \in G_k(\rho \cap \pi)} \kappa \right) \cdot c
 = \sum_{\kappa \in G_k(\rho \cap \pi)} (\kappa \cdot c)
 = 0.
\]
Hence, $c' \in \mc_{j,k+m}(n+m,q)^\perp$.
It is trivial that $\wt(c') = \wt(c)$.
\end{proof}

\begin{crl}
 \label{CrlEqMinWt}
\[
 d \left(\mc_{j,k}(n,q)^\perp \right) \geq d \left(\mc_{j,k+1}(n+1,q)^\perp \right).
\]
\end{crl}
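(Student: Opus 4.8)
The plan is to read the inequality off directly from Construction \ref{ConstrEmbedding}, specialised to the case $m = 1$. The point of that construction is that embedding a dual code word into a hyperplane produces a dual code word of the larger code with exactly the same weight; the present corollary is simply the ``minimum weight'' shadow of this fact.

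Concretely, I would first fix a minimum weight code word $c \in \mc_{j,k}(n,q)^\perp$, so that $\wt(c) = d(\mc_{j,k}(n,q)^\perp)$ and $c \neq \zero$. Next I would realise $\pg(n,q)$ as a hyperplane $\pi$ of $\pg(n+1,q)$ and apply Construction \ref{ConstrEmbedding} with $m = 1$ to $c$. This yields an embedded code word $c' \in \mc_{j,k+1}(n+1,q)^\perp$ satisfying $\wt(c') = \wt(c)$.

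The final step is to note that $c'$ genuinely witnesses an upper bound. Since the construction preserves weight and $\wt(c) = d(\mc_{j,k}(n,q)^\perp) > 0$, we have $\wt(c') > 0$, hence $c' \neq \zero$. Thus $c'$ is a \emph{nonzero} code word of $\mc_{j,k+1}(n+1,q)^\perp$ of weight $d(\mc_{j,k}(n,q)^\perp)$, which forces
\[
 d\left(\mc_{j,k+1}(n+1,q)^\perp\right) \leq \wt(c') = d\left(\mc_{j,k}(n,q)^\perp\right),
\]
exactly the claimed inequality.

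There is essentially no obstacle to overcome: all the substantive work — verifying that $c'$ lies in the orthogonal complement and that the weight is unchanged — has already been carried out in the proof of Construction \ref{ConstrEmbedding}. The only subtlety worth flagging is the nondegeneracy check $c' \neq \zero$, which is immediate from weight preservation and is what guarantees that $c'$ bounds the minimum weight from above rather than being the trivial word.
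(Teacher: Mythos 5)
Your proposal is correct and follows the paper's own argument exactly: embed a minimum weight code word of $\mc_{j,k}(n,q)^\perp$ into a hyperplane of $\pg(n+1,q)$ via Construction \ref{ConstrEmbedding} with $m=1$, and use the weight-preserving, nonzero embedded word to bound $d\left(\mc_{j,k+1}(n+1,q)^\perp\right)$ from above. The explicit nondegeneracy check is a welcome (if minor) addition to what the paper leaves implicit.
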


\begin{proof}
Take a minimum weight code word $c \in \mc_{j,k}(n,q)^\perp$.
Embedding it in some hyperplane of $PG(n+1,q)$, yields a code word of $\mc_{j,k+1}(n+1,q)^\perp$ of equal weight.
\end{proof}

The proof of the next theorem was inspired by \cite[Section 4]{lavrauwB}.

\begin{thm}
 \label{ThmPullBack}
If $j>0$, then all minimum weight code words of $\mc_{j,k}(n,q)^\perp$ are pull-backs.
\end{thm}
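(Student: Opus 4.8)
The plan is to prove the statement by induction on $k$, keeping $j$ fixed. The base case $k = j+1$ is exactly Lemma~\ref{LmPullBack}, which already shows that the minimum weight code words of $\mc_{j,j+1}(n,q)^\perp$ are pull-backs. For the inductive step I would take a minimum weight code word $c$ of $\mc_{j,k}(n,q)^\perp$ with $k \geq j+2$ and push it down one dimension with a point-projection $c \mapsto \prj j R \pi c$, whose image lands in $\mc_{j,k-1}(n-1,q)^\perp$ by Lemma~\ref{LmProj}~(3) (here $k>j+1$ is what is needed). Writing $d_k := d(\mc_{j,k}(n,q)^\perp)$, Corollary~\ref{CrlEqMinWt} gives $d_{k-1} \geq d_k$, so for any $R \notin \supp_0(c)$ the code word $c_R := \prj j R \pi c$ satisfies $\wt(c_R) \leq \wt(c) = d_k \leq d_{k-1}$ by Lemma~\ref{LmProj}~(4). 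Hence $c_R$ is either $\zero$ or a minimum weight code word of $\mc_{j,k-1}(n-1,q)^\perp$, in which latter case $\wt(c_R) = d_{k-1} = d_k = \wt(c)$ and, by the induction hypothesis, $c_R$ is a pull-back.

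Before exploiting this I would dispose of the degenerate possibility that $c_R = \zero$ for every admissible $R$. Since $\prj j R \pi c(\mu) = c \cdot \vspan{R,\mu}^{(j)}$, vanishing of $c_R$ for all $R \notin \supp_0(c)$ means $c$ is orthogonal to every $(j+1)$-space meeting the complement of $\supp_0(c)$; as $\wt(c) \leq 2q^{n-k}$ by Result~\ref{ResMaxMinWtDualCode} the set $\supp_0(c)$ is small, and a spanning argument then forces $c \in \mc_{j,j+1}(n,q)^\perp$. In that case $c$ is automatically a minimum weight code word of $\mc_{j,j+1}(n,q)^\perp$ and Lemma~\ref{LmPullBack} finishes the argument directly. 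So I may assume there is a point $R_0 \notin \supp_0(c)$ and a hyperplane $\pi \not\ni R_0$ with $c_0 := \prj j {R_0} \pi c \neq \zero$.

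Now I would extract geometric information from the equality case of Lemma~\ref{LmProj}~(4). Because the chain of inequalities forces $\wt(c_0) = \wt(c)$, no $(j+1)$-space through $R_0$ contains two $j$-spaces of $\supp(c)$, so $\lambda \mapsto \overline\lambda := \vspan{R_0,\lambda} \cap \pi$ is a bijection $\supp(c) \to \supp(c_0)$. By induction $c_0$ is a pull-back through some $(j-1)$-space $\overline\iota \subset \pi$, i.e.\ every $\overline\lambda \supseteq \overline\iota$; consequently every $(j+1)$-space $\vspan{R_0,\lambda}$ contains $\overline\iota$ and hence the fixed $j$-space $m := \vspan{R_0,\overline\iota}$. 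Since $R_0 \in m \setminus \supp_0(c)$, each $\lambda \in \supp(c)$ is distinct from $m$, so $\iota_\lambda := \lambda \cap m$ is a $(j-1)$-space, a hyperplane of $m$ avoiding $R_0$. Thus every $j$-space of $\supp(c)$ meets the single $j$-space $m$ in such a hyperplane. By Remark~\ref{RmkPullBack} it now suffices to prove that all the traces $\iota_\lambda$ coincide: their common value $\iota$ is then contained in every $\lambda$, exhibiting $c$ as a pull-back and closing the induction.

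Proving that the $\iota_\lambda$ are constant is the heart of the matter and the step I expect to be the main obstacle. I would first show that $m$ is canonically attached to $c$: choosing a second centre $R_1 \in m \setminus \bigcup_\lambda \iota_\lambda$ one checks that $R_1 \notin \supp_0(c)$ and $\vspan{R_1,\lambda} = \vspan{m,\lambda}$ for every $\lambda$, whence the $j$-space $m_1$ produced by projecting from $R_1$ must again equal $m$ (otherwise all the $(j+1)$-spaces $\vspan{m,\lambda}$ would collapse to $\vspan{m,m_1}$, contradicting the injectivity of $\lambda \mapsto \vspan{R_1,\lambda}$ coming from the equality case). The configuration that then remains to be excluded is a \emph{fan} of $j$-spaces all meeting $m$ in distinct hyperplanes; here I would invoke orthogonality of $c$ to the $k$-spaces together with the weight ceiling $\wt(c) \leq 2q^{n-k}$, arguing in the spirit of Lemma~\ref{LmPullBack} but now within the $(j+1)$-spaces containing $m$ that a genuinely spread support is too heavy to be minimal. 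Once the traces are forced to agree, the common $(j-1)$-space $\iota$ presents $c$ as a pull-back via Construction~\ref{ConstrPullBack} and Remark~\ref{RmkPullBack}, completing the proof.
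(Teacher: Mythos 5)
Your setup is sound and matches the paper's strategy up to a point: induction on $k$ with Lemma \ref{LmPullBack} as base case, projection into $\mc_{j,k-1}(n-1,q)^\perp$, the equality $\wt(c')=\wt(c)$ via Corollary \ref{CrlEqMinWt} forcing every $(j+1)$-space through the centre to contain at most one element of $\supp(c)$, and the induction hypothesis yielding a fixed $j$-space $m=\vspan{R_0,\overline\iota}$ that every $\lambda\in\supp(c)$ meets in a $(j-1)$-space. But the step you yourself flag as ``the heart of the matter'' --- forcing all the traces $\lambda\cap m$ to coincide --- is not proven, and the plan you sketch for it points in a direction that cannot close the argument. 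Taking a second centre $R_1\in m$ is uninformative: as your own ``canonical attachment'' computation shows, $\vspan{R_1,\lambda}=\vspan{m,\lambda}$ for every $\lambda$, so projecting from a point of $m$ just reproduces $m$ and gives no new constraint on the traces. What is actually needed (and what the paper does) is a \emph{second, different} transversal $j$-space: the centre $R_2$ is chosen \emph{off} $m=\rho_1$, inside a carefully pre-selected $(j+1)$-space $\rho$ through some $\lambda\in\supp(c)$ containing fewer than $q^j$ points of $\supp_0(c)\setminus\lambda$ (this choice simultaneously guarantees $c\cdot\rho\neq 0$, hence a nonzero projection, and leaves room for $R_2\in\rho\setminus(\rho_1\cup\supp_0(c))$). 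One then gets a second $j$-space $\rho_2\neq\rho_1$ met by every element of $\supp(c)$ in a $(j-1)$-space, and the conclusion follows from a case split on $\dim(\rho_1\cap\rho_2)\in\{j-2,j-1\}$: in the first case all of $\supp(c)$ collapses into a $(j+2)$-space and one reduces to Lemma \ref{LmPullBack}; in the second, a $j$-space $\lambda$ avoiding $\rho_1\cap\rho_2$ would lie in the $(j+1)$-space $\vspan{\rho_1,\rho_2}$, and a $k$-space meeting $\vspan{\rho_1,\rho_2}$ exactly in $\lambda$ gives $c\cdot\kappa=c(\lambda)\neq 0$, a contradiction. Note that this decisive sub-case is an orthogonality argument with a transversal $k$-space, not the weight-counting ``too heavy to be minimal'' argument you propose; a fan of $j$-spaces through distinct hyperplanes of $m$ need not be heavy enough for a pure weight count to exclude it.

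A secondary gap: your treatment of the degenerate case ``$c_R=\zero$ for all admissible $R$'' rests on an unproven spanning claim. Orthogonality to all $(j+1)$-spaces meeting the complement of $\supp_0(c)$ does not obviously imply orthogonality to $(j+1)$-spaces contained in $\supp_0(c)$, so membership in $\mc_{j,j+1}(n,q)^\perp$ does not follow without further work. The paper avoids this entirely by exhibiting, via the counting bound $|\supp_0(c)|\leq 2q^{n-k}\theta_j$, a concrete $(j+1)$-space $\rho$ with $c\cdot\rho\neq 0$ and a point $R\in\rho\setminus\supp_0(c)$, so that the projection is nonzero by construction.
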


\begin{proof}
Fix a value $j > 0$.
The theorem will be proved through induction on $k$.
We already know it holds for $k = j+1$.
Hence, assume that $k > j+1$, and that the theorem holds for $\mc_{j,k-1}(n-1,q)^\perp$.
Take a minimum weight code word $c \in \mc_{j,k}(n,q)^\perp$.
We know that $\wt(c) \leq 2q^{n-k}$.
Thus,
\[
 | \supp_0(c) |
 \leq \wt(c) \theta_j
 \leq 2q^{n-k} \theta_j.
\]
Take a $j$-space $\lambda \in \supp(c)$.
Assume that every $(j+1)$-space $\rho$ through $\lambda$ contains at least $q^j$ points of $\supp_0(c) \setminus \lambda$.
This yields that
\[
 | \supp_0(c) | 
 \geq \gauss{n-j}{(j+1)-j} q^j + \theta_j
 = \theta_{n-j-1} q^j + \theta_j
 = \theta_{n-1} + q^j.
\]
Putting these inequalities together implies that $2q^{n-k}\theta_j \geq \theta_{n-1} + q^j$, which leads to a contradiction, since $k \geq j+2$.

So take a $(j+1)$-space $\rho$ through $\lambda$ such that $\rho$ contains less than $q^j$ points of $\supp_0(c) \setminus \lambda$.
In particular, this means that $\rho \not \subseteq \supp_0(c)$.
Therefore, there exists a point $R \in \rho \setminus \supp_0(c)$.
If $c \cdot \rho = 0$, then $\rho$ must contain at least one other $j$-space of $\supp(c)$ than $\lambda$, which would also mean that $\rho$ contains at least $q^j$ points of $\supp_0(c) \setminus \lambda$, a contradiction.
Let $\pi$ be a hyperplane not through $R$.
We know from Lemma \ref{LmProj} (3, 4) that $c' := \prj j R \pi c \in \mc_{j,k-1}(n-1,q)^\perp$, and $\wt (c') \leq \wt(c)$.
We also know that $c' (\rho \cap \pi) = c \cdot \rho \neq 0$, so $c' \neq \zero$.

Because $c$ is a minimum weight code word, Corollary \ref{CrlEqMinWt} shows that $\wt(c') = \wt(c)$ and that $c'$ must be a minimum weight code word as well.
Since $\wt(c') = \wt(c)$, Lemma \ref{LmProj} (5) implies that no $(j+1)$-space through $R$ contains more than one $j$-space of $\supp(c)$.

By the induction hypothesis, there exists a $(j-1)$-space $\iota \subset \pi$ contained in all $j$-spaces of $\supp(c')$.
Now take a $j$-space $\lambda \in \supp(c)$.
Then $R$ projects $\lambda$ onto a $j$-space through $\iota$ (note that this holds because $\lambda$ is the only element of $\supp(c)$ in $\vspan{R,\lambda}$, so it gets projected onto an element of $\supp(c)$).
This means that $\vspan{R,\lambda}$ contains $\rho_1 := \vspan{R,\iota}$, hence $\lambda$ intersects $\rho_1$ in a $(j-1)$-space.

Now look at how $R$ was chosen.
We took a $(j+1)$-space $\rho$ through some $\lambda \in \supp(c)$, such that $\rho$ contains less than $q^j$ points of $\supp_0(c) \setminus \lambda$.
Note that $\rho_1$ intersects $\rho$ in at most a $j$-space, hence $\rho_1 \cup \lambda$ contains at most $2q^j +  \theta_{j-1}$ points of $\rho$.
Since $\rho$ contains $\theta_{j+1} \geq 3q^j + \theta_{j-1}$ points, there exists a point $R_2 \in \rho \setminus (\rho_1 \cup \supp_0(c))$.
Take a hyperplane $\pi_2$ not through $R_2$.
Repeating the previous arguments yields again a $j$-space $\rho_2 = \vspan{R_2,\iota_2}$, for some $(j-1)$-space $\iota_2 \subset \pi_2$, such that every $j$-space of $\supp(c)$ intersects $\rho_2$ in $(j-1)$-space.
Note that $R_2 \not \in \rho_1$, so $\rho_1 \neq \rho_2$.

Now take a $j$-space $\lambda \in \supp(c)$.
Then $\rho_1$ and $\rho_2$ both intersect $\lambda$ in a $(j-1)$-space, hence $\dim (\rho_1 \cap \rho_2) \geq \dim ( \rho_1 \cap \rho_2 \cap \lambda ) \geq j-2$.
Assume that $\dim (\rho_1 \cap \rho_2) = j-2$, then $\dim \vspan{\rho_1, \rho_2} = j+2$.
Now every $j$-space $\lambda \in \supp(c)$ intersects $\rho_1$ and $\rho_2$ in a different $(j-1)$-space, thus $\lambda \subset \vspan{\rho_1, \rho_2}$.
This means that $c$ is the embedding of a code word $c' \in \mc_{j,k'}(j+2,q)^\perp$, with $(j+2) - k' = n - k$.
This is only possible if $j < k' < j+2$, hence $k' = j + 1$.
Then $c'$ is a pull-back by Lemma \ref{LmPullBack}.
Thus, $c$ is a pull-back as well.

Now assume that $\dim (\rho_1 \cap \rho_2) = j-1$, and therefore $\dim \vspan{\rho_1, \rho_2} = j+1$.
Furthermore, assume that there exists a $j$-space $\lambda \in \supp(c)$ not through $\rho_1 \cap \rho_2$.
Then $\rho_1$ and $\rho_2$ intersect $\lambda$ in distinct hyperplanes of $\lambda$, hence $\lambda \subset \vspan{\rho_1,\rho_2}$.
Then there exists a $k$-space $\kappa$, intersecting $\vspan{\rho_1,\rho_2}$ in $\lambda$.
Since every $j$-space of $\supp(c)$ either contains $\rho_1 \cap \rho_2$ or is contained in $\vspan{\rho_1, \rho_2}$, this means that $\lambda$ is the only element of $\supp(c)$ contained $\kappa$.
But then $c \cdot \kappa = c(\lambda) \neq 0$, contradicting the fact that $c \in \mc_{j,k}(n,q)^\perp$.
Thus, all $j$-spaces of $\supp(c)$ go through the $(j-1)$-space $\rho_1 \cap \rho_2$.
By Remark \ref{RmkPullBack}, $c$ is a pull-back.
\end{proof}

This reduces the minimum weight problem of $\mc_{j,k}(n,q)^\perp$ to the case $j = 0$.
The following result reduces it further to $k=1$.

\begin{res}[{\cite[Theorem 11]{lavrauwB}}]\label{ResEmbeddedCodeWords} 
Every minimum weight code word of $\mc_k(n,q)^\perp$ is embedded in an $(n-k+1)$-space.
\end{res}

Theorem \ref{ThmPullBack} can generalise some previous work on the codes.

\begin{crl}
 \label{CorGeneralDual}
\begin{enumerate}
 \item[(1)] $d \left( \mc_{j,k}(n,q)^\perp \right) = d \left( \mc_1(n-k+1,q)^\perp \right)$.
 \item[(2)] If $p$ is prime, the minimum weight code words of $\mc_{j,k}(n,p)^\perp$ are scalar multiples of the standard words, and thus have weight $2p^{n-k}$.
 \item[(3)] If $q$ is even, then $d \left ( \mc_{j,k}(n,q)^\perp \right) = (q+2)q^{n-k-1}$.
\end{enumerate}
\end{crl}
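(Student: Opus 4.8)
The plan is to combine Theorem \ref{ThmPullBack} (which reduces everything about $\mc_{j,k}(n,q)^\perp$ to the case $j=0$) with Result \ref{ResEmbeddedCodeWords} (which reduces the case $j=0$ to codimension-one embeddings, i.e.\ to $k=1$). The three statements are then just the known results for $\mc_{0,1}(n-k+1,q)^\perp$, transported back up through the two reduction steps. So the corollary is essentially a bookkeeping argument, and the main work is verifying that the weight and the geometric description of the code words are preserved under each reduction.

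\medskip

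First I would prove (1). By Result \ref{ResEmbeddedCodeWords}, every minimum weight code word of $\mc_{k}(m,q)^\perp$ is embedded in an $(m-k+1)$-space, and by Construction \ref{ConstrEmbedding} embedding preserves weight; hence $d(\mc_k(m,q)^\perp)=d(\mc_1(m-k+1,q)^\perp)$ for all $m>k$. Applying this with $m=n$, $k=k$ gives $d(\mc_k(n,q)^\perp)=d(\mc_1(n-k+1,q)^\perp)$, which already settles the case $j=0$. For $j>0$, Theorem \ref{ThmPullBack} tells us the minimum weight code words of $\mc_{j,k}(n,q)^\perp$ are pull-backs, and by Construction \ref{ConstrPullBack} a pull-back of a code word $c\in\mc_{k-j}(n-j,q)^\perp$ has the same weight as $c$; conversely every code word of $\mc_{k-j}(n-j,q)^\perp$ yields a pull-back of equal weight. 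Thus $d(\mc_{j,k}(n,q)^\perp)=d(\mc_{0,k-j}(n-j,q)^\perp)$, and combining with the $j=0$ identity (applied to parameters $k-j$, $n-j$, noting $(n-j)-(k-j)+1=n-k+1$) gives $d(\mc_{j,k}(n,q)^\perp)=d(\mc_1(n-k+1,q)^\perp)$, proving (1).

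\medskip

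For (2), when $p$ is prime Corollary \ref{CrlMinWeightDualPrimePoints} identifies the minimum weight code words of $\mc_{k}(n,p)^\perp$ as scalar multiples of standard words, of weight $2p^{n-k}$. Combined with Theorem \ref{ThmPullBack}, a minimum weight code word of $\mc_{j,k}(n,p)^\perp$ is a pull-back of such a code word; I would check, via Remark \ref{RmkPullBack} and Definition \ref{DefStandardWord}, that the pull-back of a standard word of $\mc_{k-j}(n-j,p)^\perp$ is again a standard word of $\mc_{j,k}(n,p)^\perp$ of the same weight $2p^{n-k}$, which is exactly what the paragraph following Remark \ref{RmkPullBack} already asserts. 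For (3), when $q$ is even Result \ref{ResMaxMinWtDualCodeQeven} gives $d(\mc_k(m,q)^\perp)=(q+2)q^{m-k-1}$; feeding the parameters from the reduction in (1) into this formula, with $m-k$ replaced by $n-k$, yields $d(\mc_{j,k}(n,q)^\perp)=(q+2)q^{n-k-1}$.

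\medskip

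The main obstacle I anticipate is not the logic but the edge cases of the reductions: I must confirm that Theorem \ref{ThmPullBack} is applicable (it requires $j>0$, so the $j=0$ case must be handled separately by Result \ref{ResEmbeddedCodeWords} alone) and that the parameter inequalities $0\leq j<k<n$ remain satisfied at each reduced level, e.g.\ that $\mc_{0,k-j}(n-j,q)^\perp$ and $\mc_1(n-k+1,q)^\perp$ are genuine codes in the allowed range ($n-k+1\geq 2$ since $k<n$, and $k-j\geq 1$ since $j<k$). Once these boundary conditions are verified, each of (1), (2), (3) follows by directly substituting into the respective earlier result.
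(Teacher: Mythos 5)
Your proposal is correct and follows the same route as the paper: Theorem \ref{ThmPullBack} reduces $j>0$ to $j=0$, Result \ref{ResEmbeddedCodeWords} together with Construction \ref{ConstrEmbedding} reduces $k$ to $1$, and then (2) and (3) follow from Corollary \ref{CrlMinWeightDualPrimePoints} (plus the observation that pull-backs of standard words are standard words) and Result \ref{ResMaxMinWtDualCodeQeven} respectively. The paper states this in three lines; your version merely spells out the weight-preservation and parameter bookkeeping that the paper leaves implicit.
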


\begin{proof}
(1) This follows directly from Theorem \ref{ThmPullBack} and Result \ref{ResEmbeddedCodeWords}. \\
(2) As noted previously, this follows from Corollary \ref{CrlMinWeightDualPrimePoints}, Theorem \ref{ThmPullBack}, and the fact that a pull-back $c_\iota^+$ is a standard word if and only if $c$ is a standard word. \\
(3) This follows from Theorem \ref{ThmPullBack} and Result \ref{ResMaxMinWtDualCodeQeven}.
\end{proof}

If $q$ is odd and not a prime, the minimum weight of $\mc_1(n,q)^\perp$ remains an open problem.
The best known bounds to the authors are the following.

\begin{res}[{\cite[Theorem 3]{bagchi}\cite[Corollary 4.15]{lavrauwC}}]
 If $q$ is odd and not prime, then
 \[
  2q^{n-1} - 2 \frac {q-p} p \theta_{n-2}
  \leq d \left( \mc_1(n,q)^\perp \right)
  \leq 2q^{n-1} - \frac{q-p}{p-1}q^{n-2} .
 \]
\end{res}

There are other interesting constructions.
Small weight code words of $\mc_1(n,q)^\perp$ can be constructed from small weight code words of $\mc_1(2,q)^\perp$.

\begin{constr}
 \label{ConstrTruncCone}
 Let $\pi$ be a plane in $\pg(n,q)$, and take $c \in \mc_1(\pi)^\perp$.
 Let $\tau$ be an $(n-3)$-space, skew to $\pi$.
 Define $c_\tau^- \in V(0,n,q)$ as follows:
 \[
  c_\tau^-(P) = \begin{cases}
   0 & \text{if } P \in \tau, \\
   c(\vspan{P,\tau} \cap \pi) & \text{otherwise}.
  \end{cases}
 \]
 Then $c_\tau^- \in \mc_1(n,q)^\perp$ and $\wt(c_\tau^-) = \wt(c)q^{n-2}$.
\end{constr}

This construction is also described in \cite[Lemma 6]{bagchi}.
Note that $\supp(c_\tau^-)$ is a truncated cone with base $\supp(c)$ and vertex $\tau$.

In \cite{maarten}, subgeometries are used to construct small weight code words.
We can generalise this construction using field reduction.
The idea is as follows (for more details see e.g.\ \cite{lavrauw}).
Choose an exponent $e > 1$.
The projective space $\pg(n,q^e)$ can be recognised in $\pg(N,q)$ with $N = (n+1)e-1$.
The points of $\pg(n,q^e)$ correspond to an $(e-1)$-spread $\mathcal S$ of $\pg(N,q)$.
In general, each $k$-space of $\pg(n,q^e)$ corresponds to a $((k+1)e-1)$-space $\mathcal B(\kappa)$ of $\pg(N,q)$, such that each element of $\mathcal S$ is either skew to $\mathcal B(\kappa)$ or completely contained in $\mathcal B (\kappa)$.

\begin{constr}
\label{ConstrFieldRed}
 Let $e \in \mathbb{N}\setminus\{0,1\}$ and $N := ((n+1)e-1)$.
 Take a code word $c \in \mc_{2e-1}(N,q)^\perp$.
 Define
 \[
  c' : G_0(n,q^e) \rightarrow \fp :
  P \mapsto c \cdot \mathcal B(P).
 \]
 Then $c' \in \mc_1(n,q^e)^\perp$ and $\wt(c') \leq \wt(c)$.
\end{constr}

\begin{proof}
Take a line $l$ in $\pg(n,q^e)$.
Then we know that $\sett{\mathcal B(P)}{P \in l }$ is a partition of the points of $\mathcal B(l)$.
Therefore,
\[
 c' \cdot l
 = \sum_{P \in l} c'(P)
 = \sum_{P \in l} c \cdot \mathcal B(P)
 = \sum_{P' \in \cup_{P \in l} \mathcal B(P)} c(P')
 = c \cdot \mathcal B(l)
 = 0.
\]
The last equality holds because $\mathcal{B}(l)$ is a $(2e-1)$-space in $\pg(N,q)$ and $c \in \mc_{2e-1}(n,q)^\perp$.
If a point $P$ of $\pg(n,q^e)$ lies in $\supp(c')$, then $\mathcal B(P)$ must certainly contain a point of $\supp(c)$.
Since the spread $\mathcal S := \sett{\mathcal B(P)}{P \in G_0(n,q^e)}$ partitions the points of $\pg(N,q)$, $\supp(c')$ cannot contain more points than $\supp(c)$. 
\end{proof}

\begin{rmk}
If the code word $c$ in the above definition is a minimum weight code word of $\mc_{2e-1}(N,q)^\perp$, then it is embedded in an $((n-1)e+1)$-space $\pi$.
In that case, it's not hard to check that $\supp(c')$ are the points $P$ in $\pg(n,q^e)$, such that $\mathcal B (P)$ intersects $\pi$ in a single point and this point belongs to $\supp(c)$.
\end{rmk}

\section{Open problems}\label{SecOpenProblem}

A first open problem is solving the minimum weight problem of $\mc_1(n,q)^\perp$.
It would be interesting to investigate whether (all) minimum weight code words of $\mc_1(n,q)^\perp$, $n > 2$, come from Construction \ref{ConstrTruncCone}, and it would be delightful if the answer is positive.
In that case, the minimum weight problem is entirely reduced to $\mc_1(2,q)^\perp$, which remains an interesting case in itself.\\

It would also be nice if the characterisations for $\mc_{j,k}(n,q)$ can be improved beyond the bound $W(j,k,q)$, and if the minimum weight of $\mh_{j,k}(n,q)$ can be proven to be $2q^{k-j}\gauss k j$ for small values of $q$ as well.\\

Another important open problem remains a general dimension formula for these codes.
This dimension is known for $j = 0$ and is given by Hamada's formula \cite{hamada}.
Recall that $\mc_{j,k}(n,q)$ can be defined as the row span of the $p$-ary incidence matrix of $j$-spaces and $k$-spaces.
Hence, $\dim \left( \mc_{j,k}(n,q) \right)$ equals the rank of this matrix.
By duality, this matrix can also be seen as the transposed incidence matrix of $(n-k-1)$-spaces and $(n-j-1)$-spaces.
This implies that
\[
 \dim \left( \mc_{j,k}(n,q) \right)
 = \dim \left( \mc_{n-k-1,n-j-1}(n,q) \right).
\]
In particular, this means that Hamada's formula can be used to compute the dimension of $\mc_{j,n-1}(n,q)$, which equals $\dim ( \mc_{n-j-1}(n,q))$.\\

\section{Acknowledgements}

We would like to thank our promotors Leo Storme and Jan De Beule for their guidance and Maarten De Boeck for proofreading the manuscript.

We would also like to thank Andreas Bächle and Francesco Pavese to help us find the reference for Result \ref{ResOrderCollineation} and Bernhard Mühlherr for concocting an alternative proof for this result in an impressively short amount of time.

\bibliographystyle{alpha}
\bibliography{ref.bib}

\begin{thebibliography}{LSVdV10}

\bibitem[ADSW20]{lins}
S.~Adriaensen, L.~Denaux, L.~Storme, and Zs. Weiner.
\newblock Small weight code words arising from the incidence of points and
  hyperplanes in $\pg(n,q)$.
\newblock {\em Des. Codes Cryptogr.}, 2020.
\newblock \url{https://doi.org/10.1007/s10623-019-00710-0}.

\bibitem[AK92]{assmus}
E.~F. Assmus, Jr. and J.~D. Key.
\newblock {\em Designs and their codes}, volume 103 of {\em Cambridge Tracts in
  Mathematics}.
\newblock Cambridge University Press, Cambridge, 1992.

\bibitem[BI02]{bagchi}
B.~Bagchi and S.~P. Inamdar.
\newblock Projective geometric codes.
\newblock {\em J. Combin. Theory Ser. A}, 99(1):128--142, 2002.

\bibitem[CKdR99]{calkin}
N.~J. Calkin, J.~D. Key, and M.~J. de~Resmini.
\newblock Minimum weight and dimension formulas for some geometric codes.
\newblock {\em Des. Codes Cryptogr.}, 17(1-3):105--120, 1999.

\bibitem[Dar05]{darafsheh}
M.~R. Darafsheh.
\newblock Order of elements in the groups related to the general linear group.
\newblock {\em Finite Fields Appl.}, 11(4):738--747, 2005.

\bibitem[DB12]{maarten}
M.~De~Boeck.
\newblock Small weight codewords in the dual code of points and hyperplanes in
  {${\rm PG}(n,q)$}, {$q$} even.
\newblock {\em Des. Codes Cryptogr.}, 63(2):171--182, 2012.

\bibitem[DGM70]{delsarte}
P.~Delsarte, J.-M. Goethals, and F.~J. MacWilliams.
\newblock On generalized {R}eed-{M}uller codes and their relatives.
\newblock {\em Information and Control}, 16:403--442, 1970.

\bibitem[Ham68]{hamada}
N.~Hamada.
\newblock The rank of the incidence matrix of points and {$d$}-flats in finite
  geometries.
\newblock {\em J. Sci. Hiroshima Univ. Ser. A-I Math.}, 32:381--396, 1968.

\bibitem[LSVdV08]{lavrauwB}
M.~Lavrauw, L.~Storme, and G.~Van~de Voorde.
\newblock On the code generated by the incidence matrix of points and
  {$k$}-spaces in {${\rm PG}(n,q)$} and its dual.
\newblock {\em Finite Fields Appl.}, 14(4):1020--1038, 2008.

\bibitem[LSVdV10]{lavrauwC}
M.~Lavrauw, L.~Storme, and G.~Van~de Voorde.
\newblock Linear codes from projective spaces.
\newblock In {\em Error-correcting codes, finite geometries and cryptography},
  volume 523 of {\em Contemp. Math.}, pages 185--202. Amer. Math. Soc.,
  Providence, RI, 2010.

\bibitem[LVdV15]{lavrauw}
M.~Lavrauw and G.~Van~de Voorde.
\newblock Field reduction and linear sets in finite geometry.
\newblock In {\em Topics in finite fields}, volume 632 of {\em Contemp. Math.},
  pages 271--293. Amer. Math. Soc., Providence, RI, 2015.

\bibitem[PZ18]{pol}
O.~Polverino and F.~Zullo.
\newblock Codes arising from incidence matrices of points and hyperplanes in
  {${\rm PG}(n,q)$}.
\newblock {\em J. Combin. Theory Ser. A}, 158:1--11, 2018.

\end{thebibliography}

Authors address:\\
Sam Adriaensen\\
Vrije Universiteit Brussel, Department of Mathematics\\
Pleinlaan $2$\\
$1050$ Brussels\\
BELGIUM\\
\texttt{e-mail: sam.adriaensen@vub.be}
	
\bigskip
Lins Denaux\\
Ghent University
Department of Mathematics: Analysis, Logic and Discrete Mathematics\\
Krijgslaan $281$ -- Building S$8$\\
$9000$ Ghent\\
BELGIUM\\
\texttt{e-mail: lins.denaux@ugent.be}\\

\end{document}